\DeclareMathOperator{\Id}{Id}
\DeclareMathOperator{\id}{id}
\DeclareMathOperator*{\esssup}{{ess\;sup}}
\DeclareMathOperator{\dist}{dist}
\DeclareMathOperator{\real}{Re}
\DeclareMathOperator{\essinf}{ess\; inf}
\newcommand{\icomp}{\mathrm{i}}
\newcommand{\e}{\mathrm{e}}
\newcommand{\Graph}{{\mathcal G}}
\newcommand{\Din}{C^{0,\mathrm{Dini}}}
\newcommand{\CC}{\mathbb{C}}
\newcommand{\KK}{\mathbb{K}}
\newcommand{\NN}{\mathbb{N}}
\newcommand{\RR}{\mathbb{R}}
\newcommand{\dX}{\mathrm{d}_X}
\newcommand{\diffd}{\mathrm{d}} 
\newcommand{\dx}[1][x]{\,\diffd#1}
\theoremstyle{plain}
\newtheorem{theo}{Theorem}[section]
\newaliascnt{cor}{theo}
\newaliascnt{prop}{theo}
\newaliascnt{lemma}{theo}
\newtheorem{lemma}[lemma]{Lemma}
\newtheorem{prop}[prop]{Proposition}
\newtheorem{cor}[cor]{Corollary}
\theoremstyle{defi} 
\newaliascnt{defi}{theo}
\newaliascnt{assum}{theo}
\newaliascnt{assums}{theo}
\newaliascnt{prob}{theo}
\newtheorem{defi}[defi]{Definition}
\theoremstyle{rem}
\newaliascnt{rems}{theo}
\newaliascnt{rem}{theo}
\newaliascnt{exa}{theo}
\newaliascnt{exs}{theo}
\newtheorem{rem}[rem]{Remark}
\numberwithin{equation}{section}
\numberwithin{lemma}{section}
\newcommand{\be}{\begin{equation}}
\newcommand{\ee}{\end{equation}}
\newcommand{\bea}{\begin{eqnarray*}}
	\newcommand{\eea}{\end{eqnarray*}}
\newcommand{\beq}{\begin{eqnarray}}
\newcommand{\eeq}{\end{eqnarray}}
\newcommand{\mV}{\mathsf V}
\newcommand{\mVD}{{\mV_{\mathsf{D}}}}
\newcommand{\mv}{\mathsf v}
\newcommand{\mE}{\mathsf E}
\newcommand{\me}{\mathsf e}
\title{On the Lipschitz continuity of the heat kernel} 
\author[P.~Bifulco]{Patrizio Bifulco}
\author[D.~Mugnolo]{Delio Mugnolo}
\address{Lehrgebiet Analysis, Fakult\"at Mathematik und Informatik, Fern\-Universit\"at in Hagen, D-58084 Hagen, Germany}
\email{patrizio.bifulco@fernuni-hagen.de}
\email{delio.mugnolo@fernuni-hagen.de}
\thanks{We would like to thank Tom ter Elst (Auckland) for pointing us at \cite{AusMcITch98,ElsReh15,ElsOuh19,ElsWon20} and for inspiring discussions. We would also like to thank Uta Freiberg (Chemnitz) for pointing us at \cite{Str06} and for helpful comments; and to Jochen Glück (Wuppertal) for suggesting a partially alternative approach to the proof of \autoref{lem:lipschitzonecoordinate}, which eventually led to \autoref{rem:heat-kernel-buc-second-variable}.  The first author warmly thanks Joachim Kerner (Hagen) for interesting exchange. Both authors are grateful to the anonymous referee for their careful reading and their insightful remarks that improved the quality of the manuscript. \\ Both authors were supported by the Deutsche Forschungsgemeinschaft DFG (Grant 397230547).
}
\subjclass[2010]{47B34, 47D06, 47G10, 35K90}
\keywords{Heat kernels, Hölder continuity,  metric measure spaces, metric graphs, fractals, damped wave equations}
\begin{document}
	
	\begin{abstract} 
We study integral kernels of strongly continuous semigroups on Lebesgue spaces over metric measure spaces. Based on semigroup smoothing properties and abstract Morrey-type inequalities, we give sufficient conditions for Hölder or Lipschitz continuity of the kernels. 
We apply our results to (pseudo)differential operators on domains and quantum graphs, to Laplacians on a class of fractals including the Sierpiński gasket, and to structurally damped wave equations. An extension to non-autonomous problems is also discussed.
	\end{abstract}
\maketitle

%{\color{red}to-do:
%\begin{itemize}
%\item Nichtkompakter Fall? Ausführlicher erklären, wo $\mu(X)<\infty$ eine Rolle spielt. Insbesondere: wozu brauchen wir, dass der metrische Graph kompakt (oder sogar endlich!) ist?
%\item Wir sollten genauer aufpassen, ob die Resultate auf $\overline{\Omega}\times \overline{\Omega}$ oder eher nur auf $\Omega\times \Omega$ gelten. Z.B. wird in \cite{AusMcITch98}, dass die Ableitungen des Wärmekerns in $L^\infty$ liegen, d.h., $p_t\in W^{1,\infty}((0,1)\times (0,1))$, was wiederum in $C^{0,1}([0,1]\times [0,1])$ einbettet. Die Resultate in \cite{Ouh05} scheinen nur Hölder-Regularität im Inneren herzugeben, die in \cite{GriTel12} sogar nur im Inneren \textit{minus} eine Nullmenge; und bezüglich *einer* Koordinate so wie so.
%\end{itemize}

%}
%\vspace{-0.7cm}

 \section{Introduction}
The celebrated Mercer Theorem is a powerful tool to describe the kernel $k$ of an integral operator $T_k$ acting on  $L^2(X)$, where $X$ is a suitable metric measure space. In particular, it yields that $k$ is a (uniformly convergent) series involving the eigenvalues and eigenvectors of $T_k$. 
%Abstract conditions on general integral operators on open Euclidean domains to have jointly continuous integral kernels are know, see \cite{AreEls19} and references therein. 
A classical application of the Mercer Theorem deals with heat kernels, i.e., the integral kernels of strongly continuous semigroups (if such kernel exists).  In the case of Laplacians with Dirichlet or Neumann conditions on bounded domains with fairly rough boundary, the joint continuity of the heat kernel up to boundary has been discussed in~\cite[Section~6]{AreEls19}; earlier results go back at least to~\cite[Theorem~A.1.3]{Sim82}.
 In many applications, though, the eigenvectors of the semigroup's generator have additional smoothness properties -- say, they are Hölder continuous -- and it is thus natural to wonder whether this smoothness is inherited by the heat kernel. 

A partial answer to this question is based on the Schwartz Kernel Theorem and its outgrowths: kernel smoothness with respect to the spatial components is then known for all operators mapping  distributions into smooth functions, see~\cite[Theorem~5.2.6]{Hor90}: this is especially the case for the Laplacian on Euclidean domains with smooth boundary, since the semigroup it generates is analytic -- hence it maps all initial data into the domain $D(A^\infty)$ of arbitrarily high powers of the generator $A$ -- and because $D(A^\infty)$ can actually be characterized in terms of smooth functions, provided boundary regularity results are available. Similar results are known for the heat kernel associated with the Friedrichs extension of the Laplace--Beltrami operator on Riemannian manifolds, cf.~\cite[Theorem~5.2.1]{Dav89}.

One may then study weaker smoothness properties for more general classes of operators.  The present article aims at proposing abstract conditions for a semigroup acting on $L^r(X)$, where $X$ is a general metric measure space,
%\footnote{\PB{Nicht eher: where $X$ is a general metric space endowed with a measure?} \DM{Im Prinzip hast Du Recht. Die technischen Details werden aber eh in den folgenden Abschnitten ausformuliert.}}
to have an integral kernel that is $\alpha$-Hölder continuous in both coordinates,  or even {jointly $\alpha$-Hölder} continuous. Remarkably, less attention has been seemingly devoted to the extreme case of $\alpha=1$, i.e., to Lipschitz continuity.

%Given any bounded, open set $\Omega\subset \RR^d$ that is \textit{Dirichlet regular} 
%%in the sense of~\cite[Section~2]{Are00},
%the heat kernel $p_t^\Omega$ associated with the Dirichlet Laplacian on $\Omega$ belongs to $C^\infty((0,\infty)\times \Omega\times \Omega)\cap C((0,\infty)\times\overline{\Omega}\times \overline{\Omega})$, and the assumption of Dirichlet regularity is optimal (see~\cite[Theorem 5.3.4 and Theorem 5.51]{Are06}) If one neglects the time dependence,

The $\alpha$-Hölder continuity, $\alpha \in (0,1)$, in each coordinate for heat kernels of bounded analytic semigroups  was first studied in~\cite{Ouh98}: Ouhabaz showed that this is equivalent to certain Gagliardo--Nirenberg-type inequalities along with ultracontractivity of the semigroup (in turn, this is known to be equivalent to suitable Sobolev-type embeddings),  see~\autoref{rem:additional-conds} below for more details.   Comparable results for volume doubling metric measure spaces have been obtained in \cite{GriTel12}.

%Ouhabaz' result only requires $X$ to be a measure space endowed with any metric (with respect to which the Hölder continuity is then defined) and can, in particular, be applied to  the heat kernel $p_t^\Omega$ of the Dirichlet Laplacian $\Delta_\Omega^{D}$ on any open set $\Omega \subset \mathbb{R}^d$ and, more generally, to a large class of uniformly elliptic operators of even order. In this note 
The aim of this article is to show that these Gagliardo--Nirenberg-type inequalities can be replaced by abstract Morrey inequalities, i.e., the assumption that the domain $D(A^k)$ of some $k$-th power of the generator $A$ is continuously embedded in the space of Hölder continuous functions:
%, see  \autoref{lem:lipschitzonecoordinate}.
in common applications to (pseudo)differential operators on Euclidean domains, this condition is arguably easier to check, see \autoref{sec:applications}. Indeed, related ideas have already been explored in~\cite{Cou03},  under the fairly strong assumption that $(\e^{tA})_{t\ge 0}$ satisfies upper \textit{and} lower Gaussian-type kernel estimates.
In one dimension, for rather general operators with complex-valued coefficients
%of class $L^\infty$) 
on $L^2(\RR)$, joint Lipschitz continuity of the heat kernel is already known, see~\cite[Theorem~2.36]{AusMcITch98}. 
More recently, joint Hölder continuity of the integral kernels associated with second order uniformly elliptic operators with $L^\infty$ coefficients on bounded open domains of $\RR^d$ has been proved -- under rather weak assumptions on the smoothness of the boundary -- in \cite[Theorem~1.3]{ElsReh15}.
%\sout{; the Hölder exponent is not estimated and}
Under additional smoothness assumptions on the elliptic coefficients, joint Hölder continuous differentiability was proved in~\cite[Theorem~3.1]{ElsOuh19}; see also \cite[Theorem~7.5]{ElsReh15},~\cite[Theorem~3.16]{ElsOuh19} and~\cite[Theorem~1.1]{ElsWon20} for the case of unbounded domains and complex lower order coefficients with Dirichlet or Robin conditions.
%The latter assertions do not seem to cover the case of Lipschitz continuity.

This question can be lifted to a more general class of investigations on heat kernels as follows: since the space of Lipschitz functions agrees with the Sobolev space $W^{1,\infty}$ for intervals or domains with smooth boundary, and more generally for quasiconvex domains (see \cite[Theorem~4.1]{Hei05}), one may also ask whether the heat kernel lies in $W^{1, r}$ for some ${ r}\in (2,\infty]$: this has been addressed since~\cite{AusCouDuo04}, see \cite{CouJiaKos20} and references therein for more recent results.

In this paper we elaborate on these and further related results. As mentioned above, in relevant cases we actually expect the heat kernel to enjoy even higher regularity. 
%ideally all the way up to $C^\infty$\footnote{\PB{Sollen wir das so stehen lassen? Ich finde es in Ordnung, aber eigenltich haben wir kein "unbekanntes" Beispiel in dem wir $C^\infty$-Regularität haben, oder?}}.
In \autoref{lem:lipschitzonecoordinate} 
%and \autoref{cor:analyt-improv} 
we are going to propose sufficient conditions for Hölder or Lipschitz continuity of the heat kernel of semigroups $\e^{tA}$ in each coordinate, as well as for higher regularity: on general metric measure spaces $X$ we here use Hölder regularity of $A' p_t(\cdot,y)$
%\footnote{\PB{Hier $h = 1$? Da wir aber auch den allgemeinen Fall erwähnen könnte man es so stehen lassen. :)}}
as an abstract proxy for the smoothness property $p_t(\cdot,y)\in C^k(\Omega)$ for Euclidean domains $\Omega$.
In our second main result,
%\footnote{\PB{Mittlerweile finde ich, dass auch \autoref{lem:lipschitzonecoordinate} ein weiteres Mainresult ist. Wir könnten vielleicht beides als "main results" betiteln (nur ein Vorschlag, ich bin auch mit jetziger Version einverstanden!).}}
\autoref{thm:abstractlipschitzmeasurespaces}, we provide sufficient conditions  for \emph{joint} Hölder continuity of the heat kernel.
 Our conditions are especially easy to check in the Hilbert space case, see~\autoref{cor:forms}.
In \autoref{sec:nonsmooth} and \autoref{sec:nonauto} we show the flexibility of our method by discussing Hölder, or even Lipschitz continuity of the heat kernel in two non-standard settings: heat flows on infinitesimally Hilbertian metric spaces and evolution families associated with non-autonomous sesquilinear forms, respectively.

%{\color{red}\sout{that the heat kernel is Hölder continuous in both coordinates\\
 %which we will call \emph{joint Hölder continuity} later.}}
%\footnote{\DM{Der Begriff ist eigentlich kanonisch und muss nicht definiert werden.}} 
%{\color{red}\sout{As for bounded functions $f:X \times X \rightarrow \mathbb{C}$ one has the following immediate implications
 %\[
 %f \: \text{differentiable} \: \Longrightarrow \: f \: \text{Lipschitz continuous} \: \Longrightarrow \: f \: \alpha\text{-Hölder continuous for} \: \alpha \in (0,1)
 %\]
 %it seems to be a natural question whether one may even expect joint Lipschitz continuity in both coordinate as best possible regularity for the heat kernel.}}

%\textcolor{red}{To this end, we prove a slightly more abstract version in \autoref{thm:abstractlipschitzmeasurespaces} for bounded differentiable semigroups}\footnote{\PB{Das verstehe ich nicht mehr so ganz, können wir aber denke ich weg lassen, oder?}}.

In \autoref{sec:applications} we apply our results to several classes of operators.
%: elliptic differential operators of even order with smooth coefficients on domains with smooth boundary, and Schrödinger operators with both magnetic and electric potential on infinite graphs. We have already mentioned that the first case has been studied already
%As already mentioned, Hölder continuity of the heat kernel of elliptic operators on domains with smooth boundary has been known at least since in~\cite{Ouh98,Cou03}. 
Using our abstract results, we show in \autoref{subs:laplacians-domains} 
joint Lipschitz continuity up to the boundary of the heat kernel associated with uniformly elliptic operators with smooth coefficients on bounded domains of $\RR^d$: we regard this section as a sandbox to illustrate the basic principles underlying our theory, and do not have the ambition to obtain optimal regularity results under minimal  smoothness assumptions on the elliptic coefficients and the boundary. %\autoref{sec:DtN} is devoted to briefly discuss the heat kernel of the Dirichlet-to-Neumann operator.
In \autoref{sec:metricgraphs} we discuss Laplacians (and a class of more general Schrödinger operators with both magnetic and electric potential) on possibly infinite metric graphs and prove that their heat kernel is joint Lipschitz continuous, while not being continuously differentiable even in one coordinate.
%In the case of the free Laplacians, the Hölder property of exponent $\frac{1}{2}$ of the associated heat kernel has been proved in \cite[Theorem~5.2]{KosMugNic22} (jointly in both coordinates). 
%To the best of our knowledge, the smoothness of the heat kernel was so far unknown even in the case of single bounded intervals. 
In \autoref{sec:fractals} we focus on a famous fractal, the Sierpiński gasket, and on a class of its generalizations. The Laplacian thereon is associated with a Dirichlet form, and indeed with a heat kernel: we prove, furthermore, Hölder continuity of this kernel with respect to the so-called \textit{resistance metric}.
Finally, in \autoref{sec:damped-wave} we prove joint Lipschitz continuity for the integral kernel of the semigroup that governs a one-dimensional damped wave equation.

 \section{The setup}
 %We clarify some general notions which will be important in the sequel. 
 %\begin{definition}\label{def:mmr}
 %A triple $(X,\dX, \mu)$ is called a \emph{metric measure space} if $(X,\dX)$ is a metric space and $(X,\mu)$ a measure space which are compatible with each other in the sense that for every point $x \in X$ exists an open measurable neighborhood $U$ of $x$ such that $\mu(U) < \infty$.\footnote{\PB{Sollen wir das lassen oder noch allgemeiner fassen als Ouhabaz? Eigentlich spielt diese Kompatibilität ja bislang keine allzu große Rolle, oder?} \DM{Ja, sie spielt keine Rolle. Die Definition von MMR stammt zwar aus~\cite{Bab76}, wo sie lediglich einem metrischen Raum, der auch ein Maßraum ist, entspricht. Diese alte Version von Babiker ist aber unbefriedigend (ein Banachverband ist z.B. ja auch nicht nur ein Vektorverband, der auch ein Banachraum ist: man will ja die beiden Strukturen miteinander ``sprechen'' lassen), und tatsächlich mittlerweile meint man unter MMR immer etwas restriktiveres: das Blöde ist, dass die Definition von MMR durchaus nicht standardisiert ist und man versteht unter MMR in vielen Artikel etwas leicht Unterschiedliches, etwa in~\cite{Stu94,Hei07,CaoGriLiu21}: drei Artikel, drei unterschiedliche Definitionen. Daher schlage ich vor, den Begriff tunlichst zu meiden.}}
 %\end{definition}
  Throughout this article, we consider a \textit{generalized metric measure space}, i.e., a triple
\[
(X,\dX,\mu)
\]
consisting of a set $X$, a generalized metric $\dX$ and a $\sigma$-finite measure $\mu$ on $X$ (by \textit{generalized metric} we mean a function from $X\times X$ to $[0,\infty]$ that satisfies the usual axioms of a metric). This setting includes -- but is not limited to -- the case of a locally compact (possibly disconnected: see \autoref{sec:damped-wave}) Hausdorff space $(X,\dX)$ endowed with a Borel measure $\mu$. 

Given a Banach space $E$, a function $f: X \rightarrow E$ is \emph{Hölder continuous of exponent $\alpha \in (0,1]$} (or simply \textit{$\alpha$-Hölder continuous}) if
 \[
| f|_{C^{0,\alpha}} := \sup_{x \neq x'} \frac{\Vert f(x)-f(x') \Vert_E}{\dX(x,x')^\alpha} < \infty:
 \]
as usual, the case $\alpha=1$ corresponds to \emph{Lipschitz continuous} functions.
We denote by $C^{0,\alpha}(X;E)$ the Banach space of bounded $\alpha$-Hölder continuous functions on $X$ with values in $E$, endowed with the norm
$\Vert \cdot \Vert_{C^{0,\alpha}}:= \Vert \cdot \Vert_\infty + | \cdot |_{C^{0,\alpha}}.$
%\footnote{\DM{Änderungen: $\Vert \cdot \Vert_{C^{0,\alpha}}$ soll die Norm bezeichnen, $| \cdot  |_{C^{0,\alpha}}$ die Halbnorm. Und etwa der $C^\alpha$-Raum von Funktionen auf $X$ mit Werten in $E$ soll konsistent mit $C^\alpha(X;E)$ statt $C^\alpha(X,E)$ bezeichnet werden.} \PB{Einverstanden.}}
In the case that $E=\mathbb{C}$ together with the Euclidean norm, we just write $C^{0,\alpha}(X)$ instead of $C^{0,\alpha}(X;\mathbb{C})$.

%  In fact, $\Vert \cdot \Vert_{C^{0,\alpha}}$ defines a semi-norm on $C^{0,\alpha}(X;E)$ and one can prove that $C^{0,\alpha}(X;E)$ equipped with the norm even becomes a Banach space. 
 %Moreover, in the case that $\alpha=1$, we say that $f$ is \emph{Lipschitz continuous} and denote $C^{0,1}(X;E):=C^{0,1}(X;E)$.
 %If $f$ is Hölder continuous on each neighborhood, we say\footnote{\DM{Alles korrekt, aber brauchen wir das überhaupt? Ich habe den Eindruck, wir benutzen diesen Begriff nirgendwo.}} that $f$ is \emph{locally Hölder continuous with exponent $\alpha$} %resp.\:\emph{Lipschitz continuous}
 %and we denote with $C_{\mathrm{loc}}^{0,\alpha}(X;E)$ %resp.\:$C^{0,1}_{\mathrm{loc}}(X;E)$ 
 %the space of locally Hölder continuous functions on $X$.
% \footnote{\PB{Ich habe den Begriff der lokalen Lipschitz/Hölder-Stetigkeit erst einmal heraus genommen, da du völlig recht hast. Wir brauchen das bislang garnicht (anfangs war dies aber mal der Fall, vielleicht kommen wir auch noch an diesen Punkt).}}
 
Also, given a generalized metric measure space $(X,\dX,\mu)$ one can also consider the product space $(X \times X, \mu \otimes \mu, \mathrm{d}_{X \times X})$ where $\mu \otimes \mu$ denotes the usual product measure on the product $\sigma$-algebra $\Sigma_X \otimes \Sigma_X$ on $X \times X$ and $\mathrm{d}_{X \times X}$ the canonical sum metric on $X\times X$.
% given by
% \[
% d_{X \times X}((x,y),(x',y')):= \dX(x,y)+\dX(x',y'),\qquad (x,y),(x',y') \in X \times X,
% \]
%and it is an immediate observation that this also yields a (suitable) triple consisting of a metric space $(X \times X, d_{X \times X})$ together with a measure $\mu \otimes \mu$ on $X \times X$. %and that the compatibility assumption inherits onto the product space. 
One can hence consider the notion of Hölder continuity onto the product metric space $(X\times X,  \mathrm{d}_{X \times X})$. 
%$(X\times X,  \mu \otimes \mu,d_{X \times X})$. 
More precisely, we say that $f: X \times X \rightarrow E$ is \emph{jointly Hölder continuous of exponent $\alpha \in (0,1]$} %resp.\:\emph{(locally) jointly Lipschitz continuous}
 on $X \times X$ if $f$ is $\alpha$-Hölder %resp.\: (locally) Lipschitz%\footnote{\textcolor{blue}{Patrizio: Vielleicht muss man für Lipschitz auch garkeine eigene Bezeichnung einführen, wenn wir sowieso allgemein jointly Hölder zeigen?}\DM{Einverstanden!}} 
 continuous with respect to the  sum metric $d_{X\times X}$; we denote by $C^{0,\alpha}(X \times X;E)$ the space of such functions.
% , \mu \otimes \mu)$. 
%As for $(X,\dX,\mu)$, we denote likewise $C^{0,\alpha}(X \times X;E)$. %(resp.\:$C_{\mathrm{loc}}^{0,\alpha}(X \times X;E)$) %and $C^{0,1}(X \times X;E)$ (resp.\:$C^{0,1}_{\mathrm{loc}}(X \times X;E)$).
 Additionally, $f: X \times X \rightarrow E$ is said to be \emph{Hölder continuous with exponent $\alpha$ in each coordinate} %resp.\:\emph{Lipschitz in one coordinate} 
 if for every $x,y \in X$ the maps
%  $f_x$ and $f_y$ given by
 \[
 f_y: X\ni x\mapsto f(x,y)\in E \quad \text{and} \quad f_x: X\ni y\mapsto  f(x,y)
\in E 
\]
 both belong to $C^{0,\alpha}(X;E)$. %resp.\:$C^{0,1}(X;E)$.
 
%For a metric measure space $(X,\dX,\mu)$ and the corresponding direct sum $(X \times X, \mu \otimes \mu, d_{X\times X})$, we denote\footnote{\DM{Diese Erklärung ist sicherlich nicht nötig! :)}} with $L^r(X)$ and $L^p(X\times X)$ the usual $L^p$-spaces for $1 \leq p \leq \infty$ on $X$ and $X \times X$, respectively. 
Let $r \in [1,\infty)$ and $r' := \frac{r}{r-1}$.
%\footnote{\PB{Hier $p \in [1,\infty)$, oder? Sonst bekämen wir glaube ich Probleme mit dem Dual von $L^p$.}}. Following \cite[Theorem~1.1]{AreBuk94},
 A linear operator $T: L^r(X) \rightarrow L^{r'}(X)$ is called an \emph{integral operator} if there exists a measurable function $k: X \times X \rightarrow \mathbb{C}$ -- the \emph{integral kernel} of $T$ -- such that $k(x,\cdot)f \in L^1(X)$ 
%for a.e.\ $x \in X$ and all $f \in L^r(X)$ and such that
and
 \begin{equation}\label{eq:integral-op}
 (Tf)(x) = \int_X k(x,y)f(y) \dx[\mu(y)]\qquad \hbox{for all }f\in L^r(X)\hbox{ and a.e.\ }x\in X.
 \end{equation}

At the risk of redundancy, let us stress that many operators satisfy \eqref{eq:integral-op} for some $k$ that is \textit{not} an integral kernel in the above sense: for instance, the identity can be represented as in~\eqref{eq:integral-op} for the singular kernel given by $k(x,y):=\delta_x(y)$.
 
Let us recall the following fact, which explains in a clearer way why the above notion of integral operator is rather restrictive. It  will be crucial in our analysis: we refer to the  proof of \cite[Theorem 1.3]{AreBuk94}.
\begin{lemma}\label{lem:bukhv} 
For any $r \in [1,\infty)$ and any $\sigma$-finite measure space $(X,\mu)$, there is an isometric isomorphism between  $L^\infty(X;L^{r'}(X))$ and $\mathcal{L}(L^r(X);L^\infty(X))$.
\end{lemma}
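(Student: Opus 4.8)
The plan is to realise the isomorphism explicitly. Writing $g(x)\in L^{r'}(X)$ for the value at $x$ of a strongly measurable, essentially bounded map $g\colon X\to L^{r'}(X)$, I would define $\Phi(g)=T_g$ by
\[
(T_gf)(x):=\int_X g(x)(y)\,f(y)\dx[\mu(y)]=\langle g(x),f\rangle,\qquad f\in L^r(X),
\]
the pairing being the continuous (Hölder) duality between $L^{r'}(X)$ and $L^r(X)$. Strong measurability of $g$ makes $x\mapsto\langle g(x),f\rangle$ measurable, and $|\langle g(x),f\rangle|\le\|g(x)\|_{r'}\|f\|_r\le\|g\|_{L^\infty(X;L^{r'})}\|f\|_r$ for a.e.\ $x$, so $T_gf\in L^\infty(X)$ with $\|T_g\|\le\|g\|_{L^\infty(X;L^{r'})}$. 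Thus $\Phi$ is a well-defined linear contraction, and it remains to prove it is an isometry and onto.

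The isometry already requires some care, since $\|T_g\|=\sup_{\|f\|_r\le1}\esssup_x|\langle g(x),f\rangle|$ involves a supremum over a single $f$ of an essential supremum in $x$, whereas $\|g\|_{L^\infty(X;L^{r'})}=\esssup_x\|g(x)\|_{r'}$ takes these in the opposite order. Here I would exploit that strong measurability forces the essential range of $g$ to be separable: given $\varepsilon>0$ one can then select a single $h\in L^{r'}(X)$ with $\|h\|_{r'}>\|g\|_{L^\infty(X;L^{r'})}-\varepsilon$ together with a set $A$ of positive measure on which $\|g(x)-h\|_{r'}<\varepsilon$. Fixing a norming $f\in L^r(X)$, $\|f\|_r=1$, with $\real\langle h,f\rangle>\|h\|_{r'}-\varepsilon$, the estimate $|\langle g(x),f\rangle-\langle h,f\rangle|\le\|g(x)-h\|_{r'}$, valid on $A$, yields $\|T_gf\|_\infty\ge\|g\|_{L^\infty(X;L^{r'})}-3\varepsilon$. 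Letting $\varepsilon\to0$ gives $\|\Phi(g)\|=\|g\|_{L^\infty(X;L^{r'})}$, so $\Phi$ is isometric, in particular injective.

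Surjectivity is where I expect the main obstacle. Given $T\in\mathcal{L}(L^r(X);L^\infty(X))$, I would use $L^\infty(X)=L^1(X)'$ (valid as $\mu$ is $\sigma$-finite) to regard the bounded bilinear form $(f,h)\mapsto\int_X(Tf)\,h\dx[\mu]$ on $L^r(X)\times L^1(X)$ as a transpose operator $S\in\mathcal{L}(L^1(X);L^{r'}(X))$, determined by $\langle Sh,f\rangle=\int_X(Tf)\,h\dx[\mu]$. The task then reduces to representing $S$ through an essentially bounded $L^{r'}(X)$-valued density: the vector measure $A\mapsto S\mathbf 1_A$ satisfies $\|S\mathbf 1_A\|_{r'}\le\|S\|\,\mu(A)$, so for $1<r<\infty$ the reflexivity, hence the Radon--Nikodym property, of $L^{r'}(X)$ produces $g\in L^\infty(X;L^{r'}(X))$ with $S\mathbf 1_A=\int_A g\dx[\mu]$; unwinding the identifications gives $(Tf)(x)=\langle g(x),f\rangle$ a.e., that is $T=\Phi(g)$. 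The genuinely delicate case is the endpoint $r=1$, where $L^{r'}(X)=L^\infty(X)$ has no Radon--Nikodym property and strong measurability of the density cannot be read off from reflexivity; this is precisely the situation handled in the proof of \cite[Theorem~1.3]{AreBuk94}, which I would invoke to cover that case and, in fact, all $r\in[1,\infty)$ at once.
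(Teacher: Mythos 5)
Your argument for $1<r<\infty$ is correct, and it is a genuine proof where the paper offers none: the paper's entire ``proof'' of \autoref{lem:bukhv} is the citation of \cite[Theorem~1.3]{AreBuk94}. Your three steps are sound -- the contraction bound for $\Phi$; the isometry, using that strong measurability forces $g$ to be essentially separably valued (Pettis); and surjectivity via the transpose $S\in\mathcal{L}(L^1(X);L^{r'}(X))$ together with the Dunford--Pettis--Phillips theorem, since $L^{r'}(X)$ is reflexive for $r\in(1,\infty)$ and the Radon--Nikodym property of a space $Y$ is precisely the representability of every bounded operator from $L^1(\mu)$ to $Y$ by an essentially bounded, strongly measurable density. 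The $\sigma$-finiteness enters exactly where you use it: for $(L^r(X))'=L^{r'}(X)$, for $L^\infty(X)=(L^1(X))'$, and to glue the densities along an exhaustion of $X$ by sets of finite measure.

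The genuine gap is at the endpoint $r=1$, and it is not, as you suggest, merely that reflexivity is unavailable as a proof device: with your (Bochner) definition of $L^\infty(X;L^{r'}(X))$ as strongly measurable essentially bounded functions, the statement is \emph{false} for $r=1$, so no proof can ``cover that case'' within your framework. Concretely, take $X=(0,1)$ with Lebesgue measure and $Tf(x)=\int_0^x f(y)\,\mathrm{d}y$, a bounded operator from $L^1(0,1)$ to $L^\infty(0,1)$. If $T=\Phi(g)$ with $g$ strongly measurable, then testing against a countable dense set of $f\in L^1(0,1)$ and using $(L^1)'=L^\infty$ forces $g(x)=\mathbf{1}_{[0,x]}$ in $L^\infty(0,1)$ for a.e.\ $x$; since $\Vert\mathbf{1}_{[0,x]}-\mathbf{1}_{[0,x']}\Vert_{L^\infty}=1$ whenever $x\neq x'$, such a $g$ is not essentially separably valued -- a contradiction. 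So $\Phi$ is not surjective at $r=1$. The isomorphism proved in \cite[Theorem~1.3]{AreBuk94} -- and the reading the lemma needs, since the paper invokes it at $r=1$ in \autoref{lem:lipschitzonecoordinate} -- takes $L^\infty(X;L^{r'}(X))$ in the weak$^*$ sense, i.e.\ as the dual of the Bochner space $L^1(X;L^r(X))$; for $r=1$ this is just $L^\infty(X\times X)$. Your RNP argument then has a useful byproduct: it shows that for $1<r<\infty$ every element of that weak$^*$ space has a strongly measurable representative, so the Bochner and weak$^*$ definitions coincide precisely in the range where your proof operates.
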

  
In particular, each bounded linear operator from $L^r(X)$ to $L^\infty(X)$ is an integral operator with integral kernel belonging to $L^\infty(X;L^{r'}(X))$, and vice versa. If a semigroup of bounded linear operators consists of integral operators in the above sense, the family of their integral kernels is commonly referred to as the semigroup's \textit{heat kernel}: this notion has commonly been used in the literature in connection with strongly continuous, symmetric Markovian semigroups -- see, e.g., the fairly abstract metric measure theoretical treatment in~\cite{PosRuc18} -- but has also been extended to further classes, including strongly continuous semigroups on $L^2(\Omega)$ generated by elliptic operators with complex coefficients on $\Omega\subset \RR^d$ \cite{AusMcITch98,Ouh05}. At any rate, we are only aware of investigations that take the cue from a semigroup on an $L^2$ space.

We are here going to propose an abstract notion of heat kernel that depends on the tangible properties of a generator, rather than on those of the -- usually elusive -- semigroup it generates.

\begin{defi}\label{def:abstract-heat-kernel}
Let $r \in [1,\infty]$ and $A$ be a closed  linear operator on $L^r(X)$. A function $p=p_\cdot(\cdot,\cdot):(0,\infty)\times X\times X\to \CC$ such that
\footnote{ To avoid possible confusion, here and in the following we write $A_x$ (resp.\ $A_y$) to signify the action of the operator $A$ on $p_t$ with respect to its first coordinate (``$x$'') (resp.\ its second coordinate (``$y$'')); likewise, we occasionally write  $p_t\in D(A_x)$ (resp., $p_t\in D(A_y)$) to spell out the condition that $p_t$ belongs to the Banach space $D(A)$ and, hence, that  $A$ can be applied to $p_t$ as a function of its first (resp., second) coordinate.
}
\begin{enumerate}[(a)]
\item\label{item:measurability-heat-kernel} $p_t(\cdot,\cdot):X \times X \rightarrow \mathbb{C}$ is measurable and $p_t(x,\cdot)f(\cdot)\in L^1(X)$ for all $t>0$, all $f\in L^r(X)$, and a.e.\ $x\in X$,
\item\label{item:semigroup-maps-to-l^p} the map $x \mapsto \int_X p_t(x,y)f(y) \mathrm{d} y$ belongs to $L^r(X)$,
\item\label{item:non-differentiability-of-semigroup} $t\mapsto p_t(\cdot,y)\in C^1\left((0,\infty);L^r(X)\right)\cap C\left((0,\infty);D(A_x)\right)$ for a.e.\ $y\in X$,
\item\label{item:solution-cauchy-problem}  \(\frac{\partial }{\partial t}p_t(\cdot,y)=A_x p_t(\cdot,y)\) for all $t>0$ and a.e.\ $y\in X$,
\item\label{prop:semig} $p_{t+s}(x,y ) = \int_X p_t (x,z) p_s(z,y) \dx[\mu](z)$ for all $s,t>0$ and a.e.\ $(x,y)\in X\times X$, and
%\item\label{item:strong-continuity} $\lim\limits_{t\to 0^+}\int_X p_t(\cdot,y)f(y)\dx[\mu(y)]=f(\cdot)$ (in $L^r(X)$) for all $f\in L^r(X)$
\end{enumerate} 
is called \emph{heat kernel} associated with $A$.
\end{defi}

Here we canonically regard the domain $D(A)$ of the closed, linear operator $A$ as a Banach space in its own right,  with respect to the corresponding graph norm  $\Vert \cdot \Vert_{A}$. 

The simple example of the one-dimensional Gaussian kernel associated with the (not densely defined!) operator
\[
Af:=f'',\qquad f\in W^{2,\infty}(\RR),
\]
on $L^\infty(\RR)$ shows that \autoref{def:abstract-heat-kernel} does actually generalize the traditional notion; further, much subtler examples can be found in~\cite{ElsRob08}.
Indeed, the following holds.

\begin{lemma}\label{lem:easy-semigr}
Let $r \in [1,\infty]$ and $A$ be a closed linear operator on $L^r(X)$ that is associated with a heat kernel as in~\autoref{def:abstract-heat-kernel}. Then the following assertions hold.
\begin{enumerate}[(i)]
\item\label{item:definition-integral-semigroup}
The operator family
\[
T(t)f:=\begin{cases}
\int_X p_t(\cdot,y)f(y) \dx[\mu(y)],\qquad & t>0,\\
f& t=0,
\end{cases}
\qquad f\in L^r(X),\
\]
is a semigroup of bounded linear operators on $L^r(X)$.
\item This semigroup is strongly continuous if and only if, additionally,
\begin{enumerate}[(f)]
\item\label{item:strong-continuity} $\lim\limits_{t\to 0^+}\int_X p_t(\cdot,y)f(y)\dx[\mu(y)]=f(\cdot)$ (in $L^r(X)$) for all $f\in L^r(X)$;
\end{enumerate}
in this case, $A$ is its generator, i.e., $(T(t))_{t\ge 0}=(\e^{tA})_{t\ge 0}$.
% {\color{red}(and, in particular, $A$ is densely defined)}.
\item Let $A$ be densely defined. Then this semigroup is differentiable if and only if, additionally,
\begin{enumerate}[(c')]
\item\label{item:differentiability-of-semigroup} $t\mapsto p_t(\cdot,y)\in C^1\left((0,\infty);D(A_x)\right)$ for a.e.\ $y\in X$.
\end{enumerate}
\item 
If $r<\infty$, then its adjoint $A'$ is also associated with a heat kernel $p' = p'_\cdot(\cdot, \cdot)$ that satisfies
 \begin{equation}\label{eq:pp'}
 p'_t(x,y)=p_t(y,x) \qquad\hbox{for all }t>0\hbox{ and a.e. }x,y\in X.
 \end{equation}
\end{enumerate}
\end{lemma}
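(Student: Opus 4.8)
The plan is to read off from condition~\ref{item:measurability-heat-kernel} that, for a.e.\ $x$, the section $p_t(x,\cdot)$ lies in $L^{r'}(X)$: indeed $p_t(x,\cdot)f\in L^1(X)$ for \emph{every} $f\in L^r(X)$ forces this by the converse of Hölder's inequality, so that $(T(t)f)(x)=\int_X p_t(x,y)f(y)\dx[\mu(y)]$ is a genuine a.e.-defined pairing. Boundedness of $T(t)$ on $L^r(X)$ then follows from the closed graph theorem: if $f_n\to f$ in $L^r(X)$ then $(T(t)f_n)(x)\to(T(t)f)(x)$ for a.e.\ $x$ by continuity of that pairing, and if moreover $T(t)f_n\to g$ in $L^r(X)$ a subsequence converges a.e., whence $g=T(t)f$; as $T(t)$ is everywhere defined and closed it is bounded. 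The algebraic semigroup law $T(t+s)=T(t)T(s)$ is obtained by inserting the definitions, interchanging the order of integration --- justified by Tonelli together with $p_t(x,\cdot)\in L^{r'}(X)$ and $p_s(\cdot,y)\in L^r(X)$ --- and invoking Chapman--Kolmogorov~\ref{prop:semig}; with $T(0)=\Id$ this proves~\ref{item:definition-integral-semigroup}.

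For the strong continuity equivalence I would use the standard fact that a family obeying the algebraic semigroup law is strongly continuous on $[0,\infty)$ if and only if it is strongly continuous at $0^+$, which is precisely~\ref{item:strong-continuity}. To identify the generator $B$, differentiate the integral representation: \ref{item:non-differentiability-of-semigroup} and~\ref{item:solution-cauchy-problem} give $\tfrac{\diffd}{\diffd t}T(t)f=\int_X A_x p_t(\cdot,y)f(y)\dx[\mu(y)]$, and since $A$ is closed this integral equals $A\,T(t)f$; hence $T(t)f\in D(A)$ and $\tfrac{\diffd}{\diffd t}T(t)f=A\,T(t)f$ for all $f$ and $t>0$. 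A resolvent computation --- integrating $\e^{-\lambda t}\tfrac{\diffd}{\diffd t}T(t)f$ by parts on $[\varepsilon,\infty)$, letting $\varepsilon\to0^+$, using strong continuity and closedness of $A$ --- yields $A\,R(\lambda,B)=B\,R(\lambda,B)$ for $\real\lambda$ large, where $R(\lambda,B)=(\lambda-B)^{-1}=\int_0^\infty \e^{-\lambda t}T(t)\dx[t]$; since $\ran R(\lambda,B)=D(B)$ this gives $B\subseteq A$.

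The reverse inclusion $A\subseteq B$ is the main obstacle, because~\ref{item:solution-cauchy-problem} is inherited by \emph{every} closed extension of $B$ and so cannot by itself single out $A$; equivalently, one must rule out eigenvectors of $A$ at some $\lambda\in\rho(B)$, i.e.\ prove injectivity of $\lambda-A$. The natural input is the commutation $A\,T(t)g=T(t)\,Ag$ for $g\in D(A)$: differentiating the identity $p_{t+s}(\cdot,y)=T(t)p_s(\cdot,y)$ (another form of~\ref{prop:semig}) in $s$ and using~\ref{item:solution-cauchy-problem} delivers this commutation for every $h$ in the range of the semigroup, and once it is available for $g\in D(A)$ the formula $T(t)g-g=\int_0^t T(s)Ag\,\dx[s]$ gives $g\in D(B)$ with $Bg=Ag$, hence $A=B$. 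Extending the commutation from the semigroup's range to all of $D(A)$ is exactly the subtle step, since it amounts to graph-norm continuity of $s\mapsto T(s)g$ at $0^+$; in the reflexive range $1<r<\infty$ this is most cleanly obtained by dualizing, running the $B\subseteq A$ argument for the adjoint pair of part~(iv). Part~(iii) is then a regularity refinement: a differentiable $C_0$-semigroup is characterized by $T(t)\colon L^r(X)\to D(B)=D(A)$ with $t\mapsto T(t)f$ of class $C^1$ into $D(A)$, and transcribing this through the integral representation is exactly~\ref{item:differentiability-of-semigroup}.

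Finally, for~(iv) with $r<\infty$ I would set $p'_t(x,y):=p_t(y,x)$ and recognize the corresponding integral operator as the Banach-space adjoint $T(t)'$ on $L^{r'}(X)$: the pairing identity $\langle T(t)f,g\rangle=\int_X\int_X p_t(x,y)f(y)g(x)\dx[\mu(y)]\dx[\mu(x)]=\langle f,T(t)'g\rangle$ shows that $T(t)'$ has kernel $p_t(y,x)$, which is~\eqref{eq:pp'}. Properties~\ref{item:measurability-heat-kernel},~\ref{item:semigroup-maps-to-l^p} and~\ref{prop:semig} for $p'$ transfer from those of $p$ by Fubini and by reversing the order of composition in Chapman--Kolmogorov, while~\ref{item:non-differentiability-of-semigroup} and~\ref{item:solution-cauchy-problem} for the pair $(p',A')$ reduce to $\partial_t p_t(y,\cdot)=A' p_t(y,\cdot)$, which I would obtain by running the generator identification of~(ii) for the adjoint semigroup, using that on reflexive $L^r(X)$ the adjoint of a $C_0$-semigroup is again strongly continuous with generator $A'=B'$. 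The one genuinely separate case is $r=1$: there $T(t)'$ lives on $L^\infty(X)$ and is only weak$^\ast$-continuous, so the defining properties of $p'$ must be verified directly rather than through strong continuity of the dual semigroup.
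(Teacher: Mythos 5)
Your parts (i) and (iii), and the algebraic construction in (iv), are correct and considerably more detailed than the paper's own proof, which disposes of (i) by citing properties (a), (b), (e), of (iii) by citing Engel--Nagel plus ``standard localization arguments'', and of (iv) by invoking ``a direct computation based on Fubini's Theorem''; your converse-H\"older/closed-graph argument and your resolvent computation giving $B\subseteq A$ (where $B$ denotes the generator of $(T(t))_{t\ge 0}$) are sound. The genuine problem is that the two steps you leave open are each deferred to the other: the reverse inclusion $A\subseteq B$ in (ii) is to be obtained ``by dualizing, running the $B\subseteq A$ argument for the adjoint pair of part (iv)'', while the heat-equation properties (c)--(d) for the pair $(p',A')$ in (iv) are to be obtained ``by running the generator identification of (ii) for the adjoint semigroup'', i.e.\ from $A'=B'$. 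Neither argument can start before the other is finished, so the proposal is circular and the identification $(T(t))_{t\ge0}=(\e^{tA})_{t\ge 0}$ is never actually established.

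This circularity cannot be repaired, and your own diagnosis explains why: properties (a)--(e) are inherited, with the same kernel, by \emph{every} closed extension of the generator. Concretely, on $L^2(0,1)$ let $p_t$ be the Dirichlet heat kernel and let $A$ be the maximal Laplacian, $D(A)=H^2(0,1)$, $Af=f''$. Then $A$ is closed and densely defined and all of (a)--(f) hold, since the sections $p_t(\cdot,y)$ lie in $H^2(0,1)\cap H^1_0(0,1)\subset D(A)$ and satisfy the heat equation; yet the generator of $(T(t))_{t\ge 0}$ is the Dirichlet Laplacian, a proper restriction of $A$. Moreover $p_t(\cdot,y)\notin D(A')=H^2_0(0,1)$, because the normal derivative of $p_t(\cdot,y)$ at the boundary does not vanish, so the transposed kernel is not a heat kernel for $A'$ and part (iv) fails as well. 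In other words, the assertion ``$A$ is its generator'' in (ii), and part (iv) with it, are not derivable from the definition for a merely closed $A$ --- and the paper's one-line justifications (``It follows from the properties in (c)--(d) that its generator is $A$''; Fubini for (iv)) gloss over exactly the step you could not close. The statement becomes correct --- and your outline then goes through without any dualization detour, via uniqueness of classical solutions of the abstract Cauchy problem --- under an additional hypothesis such as: $\lambda-A$ is injective for some $\lambda$ in the right half-plane, or $A$ is a priori known to be a generator, which is the situation in which the paper actually uses the lemma. So what your attempt has uncovered is a missing hypothesis in the statement itself rather than a repairable defect of your own argument.
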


\begin{proof}
(i) The assertion follows from the properties in~\autoref{def:abstract-heat-kernel}.\eqref{item:measurability-heat-kernel}-\eqref{item:semigroup-maps-to-l^p}-\eqref{prop:semig}.

(ii) The assertion about strong continuity is obvious, since by \eqref{item:definition-integral-semigroup}
\begin{equation}\label{eq:integral-semi}
T(t)f=\int_X p_t(\cdot,y)f(y)\dx[\mu(y)]\qquad \text{for $t>0$}.
\end{equation}
It follows from the properties in~\autoref{def:abstract-heat-kernel}.\eqref{item:non-differentiability-of-semigroup}-\eqref{item:solution-cauchy-problem} that its generator is $A$.

(iii) By \cite[Chapter II, Section 4.b]{EngNag00}, $(\e^{tA})_{t\ge 0}$ is differentiable if and only if
$\e^{tA}$ is, for all $t>0$, a bounded linear operator from $L^r(X)$ to $D(A)$ such that for all $f\in L^r(X)$ the map $t\mapsto \e^{tA}f$ is continuously differentiable with
\[
\frac{\dx[\e^{tA}f] }{\dx[t]}=A\e^{tA}f,\qquad t>0.
\]
Now the claim follows from~\eqref{eq:integral-semi} by standard localization arguments.

(iv) This can be checked by a direct computation based on Fubini's Theorem.
\end{proof}

Combining~\eqref{eq:pp'} and \autoref{def:abstract-heat-kernel}.\eqref{item:non-differentiability-of-semigroup} we obtain, in particular, that
 \begin{equation}\label{eq:heat-pp'}
t\mapsto p_t(x,\cdot)\in C((0,\infty);L^{r'}(X))\qquad\hbox{ for a.e. }x\in X.
 \end{equation}

%\sout{If $A$ is associated with a heat kernel $p$, then $A$ generates a differentiable (not necessarily strongly continuous) semigroup $(\e^{tA})_{t\ge 0}$ on $L^r(X)$ and $\e^{tA}$ is  for each $t>0$ an integral operator such that}
% \[
% \e^{tA}f(x)=\int_X p_t(x,y)f(y)\dx[\mu(y)]\qquad \hbox{for all }t>0,\hbox{ a.e. }x\in X;
% \]
%\sout{this semigroup is strongly continuous if and only if, additionally, the heat kernel satisfies
% $\lim\limits_{t\to 0^+}\int_X p_t(\cdot,y)f(y)\dx[\mu(y)]=f(\cdot)$ (in $L^r(X)$) for all $f\in L^r(X)$.}

 \section{Main Results}\label{sec:main-results}
 \begin{theo}[Hölder continuity in one coordinate]\label{lem:lipschitzonecoordinate}
 Let $r \in [1,\infty)$, and let a closed, densely defined, linear operator $A$ on $L^r(X)$
 generate a differentiable semigroup $(\e^{tA})_{t \geq 0}$ on $L^r(X)$. 
 
If $D(A^k)$ is continuously embedded into $C^{0,\alpha}(X)$ for some $k \in \mathbb{N}$ and some $\alpha \in (0,1]$, then $A$ is associated with a heat kernel $p=p_\cdot(\cdot,\cdot)$ and the following assertions hold.
 \begin{itemize}
 \item[(i)] The map $X \ni x \mapsto  p_t(x,\cdot) \in D(A'_y)$
%  {\color{red}and $X\ni y\mapsto p_t(\cdot,y)\in D(A_x)$} are 
is $\alpha$-Hölder continuous
 % is of class $C^{0,\alpha}(X;D(A'_y))$ 
for all $t > 0$.
 \item[(ii)] The map $X\ni x \mapsto { A_y'}p_t(x,\cdot)\in L^\infty(X)$ is $\alpha$-Hölder continuous for all $t>0$.
\end{itemize}
 \end{theo}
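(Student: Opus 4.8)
The plan is to produce the heat kernel from the smoothing hypothesis, to extract both Hölder estimates from a single duality principle, and finally to upgrade the second estimate from the $L^{r'}$- to the $L^\infty$-norm in the $y$-variable; this last step is where I expect the genuine difficulty to lie.

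\emph{Construction of the kernel.} Since $(\e^{tA})_{t\ge0}$ is differentiable, $\e^{tA}$ maps $L^r(X)$ into $D(A^n)$ for every $n\in\NN$ and every $t>0$; in particular $\e^{tA}\colon L^r(X)\to D(A^k)$ is bounded, and composing with the assumed embedding $D(A^k)\hookrightarrow C^{0,\alpha}(X)\hookrightarrow L^\infty(X)$ shows that $\e^{tA}\colon L^r(X)\to L^\infty(X)$ is bounded. By \autoref{lem:bukhv} each $\e^{tA}$ is then an integral operator whose kernel $p_t$ lies in $L^\infty(X;L^{r'}(X))$, and these kernels assemble into the candidate $p=p_\cdot(\cdot,\cdot)$. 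Properties (a)--(b) of \autoref{def:abstract-heat-kernel} follow at once from $p_t\in L^\infty(X;L^{r'}(X))$ and Hölder's inequality, while the semigroup law $\e^{(s+t)A}=\e^{sA}\e^{tA}$ combined with Fubini's theorem yields the Chapman--Kolmogorov identity (e). For (c)--(d), since $\e^{tA}$ maps into $D(A^{k+1})$ the operator $A\e^{tA}\colon L^r(X)\to D(A^k)\hookrightarrow L^\infty(X)$ is again an integral operator; differentiating $\e^{tA}f=\int_X p_t(\cdot,y)f(y)\,\diffd\mu(y)$ in $t$ and matching kernels identifies its kernel with $\partial_t p_t=A_x p_t$, and a routine localisation transfers the time regularity of $t\mapsto\e^{tA}$ onto $t\mapsto p_t(\cdot,y)$ for a.e.\ $y$.

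\emph{A duality principle and assertion (i).} The engine behind both statements is the following observation: if $S\colon L^r(X)\to C^{0,\alpha}(X)$ is bounded with kernel $s(\cdot,\cdot)$, then for $f\in L^r(X)$ with $\|f\|_r\le1$ one has $|(Sf)(x)-(Sf)(x')|\le\|S\|\,\dX(x,x')^\alpha$, and taking the supremum over such $f$ gives, via $(L^r)^\ast=L^{r'}$,
\[
\|s(x,\cdot)-s(x',\cdot)\|_{L^{r'}}\le\|S\|\,\dX(x,x')^\alpha .
\]
Thus $x\mapsto s(x,\cdot)\in L^{r'}(X)$ is $\alpha$-Hölder. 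Applying this with $S=\e^{tA}$ shows that $x\mapsto p_t(x,\cdot)\in L^{r'}(X)$ is $\alpha$-Hölder; applying it with $S=A\e^{tA}$, whose kernel equals $\partial_t p_t(x,\cdot)=A'_y p_t(x,\cdot)$ --- the second identity by the heat equation for the adjoint kernel $p'$ together with the symmetry $p'_t(x,y)=p_t(y,x)$ of \autoref{lem:easy-semigr}(iv) --- shows that $x\mapsto A'_y p_t(x,\cdot)\in L^{r'}(X)$ is $\alpha$-Hölder. Since the graph norm on $D(A'_y)$ is $\|\cdot\|_{L^{r'}}+\|A'\,\cdot\|_{L^{r'}}$, adding these two estimates proves (i).

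\emph{Assertion (ii) and the main obstacle.} It remains to sharpen the $L^{r'}$-control of $A'_y p_t(x,\cdot)$ to $L^\infty$-control in the $y$-variable. Writing Chapman--Kolmogorov through \autoref{lem:easy-semigr}(iv) as $p_t(x,\cdot)=\e^{(t/2)A'}p_{t/2}(x,\cdot)$ gives $A'_y p_t(x,\cdot)=A'\e^{(t/2)A'}p_{t/2}(x,\cdot)$, and since $x\mapsto p_{t/2}(x,\cdot)$ is already $\alpha$-Hölder into $L^{r'}(X)$, the claim reduces to boundedness of the adjoint smoothing operator $A'\e^{(t/2)A'}\colon L^{r'}(X)\to L^\infty(X)$. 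This is exactly the delicate point, which I expect to be the crux: the embedding $D(A^k)\hookrightarrow C^{0,\alpha}(X)$ regularises $(\e^{tA})$ only in its first variable, so by duality it merely provides $A'\e^{sA'}\colon L^1(X)\to L^{r'}(X)$, whereas the required mapping is $A'\e^{sA'}\colon L^{r'}(X)\to L^\infty(X)$, i.e.\ an $L^\infty$-bound on the second variable of the kernel that is not contained in the first-variable embedding. When $r=1$ the conclusion is automatic, since then $r'=\infty$ and the duality principle applied to $S=A\e^{tA}$ already lands in $L^\infty(X)$; for general $r$ this upgrade must be obtained from additional input on the adjoint semigroup, exploiting the symmetry $p_t(x,y)=p'_t(y,x)$ and the semigroup law to transport the first-variable $L^\infty$-smoothing of $(\e^{tA})$ onto the second variable, and I regard it as the essential obstacle.
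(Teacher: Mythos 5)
Your construction of the heat kernel and your proof of assertion (i) are correct and are essentially the paper's own argument: the paper, too, obtains $p_t$ by applying \autoref{lem:bukhv} to $\e^{tA}\colon L^r(X)\to L^\infty(X)$, and it proves (i) by exactly your duality principle, implemented by pairing $p_t(x,\cdot)-p_t(x',\cdot)$ against $f\in D(A)$, moving $A^h$, $h\in\{0,1\}$, across the pairing, and concluding by density of $D(A)$ --- which is the same computation as your identification of the kernel of $A\e^{tA}$ with $A_y'p_t$.

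For assertion (ii) your proposal has a genuine gap when $r\in(1,\infty)$: you reduce the claim to boundedness of $A'\e^{(t/2)A'}\colon L^{r'}(X)\to L^\infty(X)$ but do not derive it (your observation that the case $r=1$ is automatic is correct). You should know, however, that the obstacle you isolated is precisely the point at which the paper's own proof does not hold up. The paper makes the same reduction: via \eqref{eq:tt2-selfadj} and H\"older's inequality it reaches the factor $\Vert p_{t/2}(\cdot,y)\Vert_{L^r(X)}$ in \eqref{eq:tt2-selfadj-prelim}, and then disposes of it by asserting that boundedness of $\e^{tA}\colon L^r(X)\to L^\infty(X)$ together with \autoref{lem:bukhv} permits taking $\esssup$ over $y$. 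But \autoref{lem:bukhv} applied to $\e^{tA}$ controls $\esssup_x\Vert p_t(x,\cdot)\Vert_{L^{r'}(X)}$, the mixed norm in the opposite order; the quantity actually needed, $\esssup_y\Vert p_{t/2}(\cdot,y)\Vert_{L^r(X)}$, is --- again by \autoref{lem:bukhv}, now applied with $r'$ in place of $r$ to the transposed kernel --- equivalent to boundedness of $\e^{(t/2)A'}\colon L^{r'}(X)\to L^\infty(X)$, i.e.\ to exactly the adjoint smoothing you said was missing. This does not follow from the hypotheses, and in fact (ii) can fail without it: on $L^2(0,1)$ let $B$ be the Dirichlet Laplacian, $e_1$ its normalized ground state, $\psi\in L^2(0,1)$ nonnegative and unbounded, $Uf:=f+\langle f,\psi\rangle e_1$ (invertible since $\langle e_1,\psi\rangle\ge 0$), and $A:=UBU^{-1}$. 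Then $A$ is closed, densely defined, generates an analytic semigroup, and $D(A^k)=UD(B^k)=D(B^k)\hookrightarrow C^{0,1}([0,1])$ because $U^{\pm 1}$ leave $D(B^k)$ invariant, so all hypotheses hold with $r=2$; but $\e^{tA}=U\e^{tB}U^{-1}$ has kernel $p_t(x,y)=p_t^B(x,y)+e_1(x)\bigl(\e^{tB}\psi\bigr)(y)-\e^{-\pi^2 t}e_1(x)\psi(y)$, so neither $p_t(x,\cdot)$ nor $A_y'p_t(x,\cdot)$ lies in $L^\infty(X)$ for any $x\in(0,1)$, and assertion (ii) is false for this operator. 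The upgrade from $L^{r'}$ to $L^\infty$ in the $y$-variable therefore genuinely requires an additional assumption on the adjoint --- e.g.\ $D((A')^l)\hookrightarrow C^{0,\alpha}(X)$ as in \autoref{thm:abstractlipschitzmeasurespaces}, or self-adjointness; your diagnosis of the essential obstacle is exactly right, and neither your argument nor the paper's closes it under the stated assumptions.
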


In particular, ${ A_y'}p_t(\cdot,y): X\to \mathbb{K}$ is $\alpha$-Hölder continuous in the first coordinate for a.e.\ $y\in X$.

 \begin{proof}
In the following we will tacitly use the following: Because the semigroup $(\e^{tA})_{t \geq 0}$ is differentiable,  $\e^{tA}$ is a bounded operator mapping $L^r(X)$ into $(D(A^n), \Vert \cdot \Vert_{A^n})$ for every $n \in \mathbb{N}$ and every $t>0$ (see \cite[Chapter II, Section 4.b]{EngNag00}).
  
We can without loss of generality assume throughout the  semigroup $\e^{tA}$ to be uniformly bounded, as a scalar rescaling of the generators does not affect the smoothness of the heat kernel: this will help us to avoid technicalities when dealing with graph norms.

To begin with, observe that, for any $t>0$ and any $f\in L^r(X)$, $\e^{tA}f$ is measurable, as it belongs to $L^r(X)$; and bounded, as it lies in $D(A^k)\hookrightarrow C^{0,\alpha}(X)$. Accordingly, $\e^{tA}$ is a bounded operator mapping $L^r(X)$ to $L^\infty(X)$, i.e., it has a kernel such that by \autoref{lem:bukhv} the map $x \mapsto p_t(x,\cdot)$ is of class $L^\infty(X;L^{r'}(X))$. This means in particular that in view of \eqref{eq:heat-pp'} $p_t(x,\cdot)p_s(\cdot,y) \in L^1(X)$ for all $t,s > 0$ and a.e.\ $(x,y) \in X \times X$ and in particular property \autoref{def:abstract-heat-kernel}.\eqref{prop:semig} holds.

(i) Our proof is  inspired by that of \cite[Theorem~1]{Ouh98}.
Let $t>0$ and $f\in L^r(X)$.  Since $D(A^k)\hookrightarrow C^{0,\alpha}(X)$ continuously, we find $C_1,C_2 \geq 0$ such that
      \begin{align}
      \begin{aligned}\label{eq:abschaetzung-ah-times-semigroup}
 \| \e^{tA}f \|_{C^{0,\alpha}(X)} + \| A\e^{tA}f \|_{C^{0,\alpha}(X)} &\leq C_1 \Vert \e^{tA}f \Vert_{A^k} + C_2\Vert A\e^{tA}f \Vert_{A^k}.
%  \\
%&= C_1\big(\Vert \e^{tA}f \Vert_{L^r(X)} + \Vert A^{k}\e^{tA}f \Vert_{L^r(X)}\big) \\&+ C_2\big(\Vert A\e^{tA}f \Vert_{L^r(X)} + \Vert A^{k+1}\e^{tA}f \Vert_{L^r(X)}\big) \\&\leq C_1\big(\Vert\e^{tA}f \Vert_{L^r(X)} + \Vert \e^{tA}f \Vert_{A^{k}}\big) \\&+C_2\big(\Vert\e^{tA}f \Vert_{A} + \Vert \e^{tA}f \Vert_{A^{k+1}}\big):
          \end{aligned}
%          \qquad\hbox{for all }t>0,\ f\in L^r(X).
	      \end{align}
% and thus, in particular into %$D(A^h)$ and
% $D(A^{k+1})$ endowed with the corresponding graph norm 
% %$\Vert \cdot \Vert_{A^h}$ and
%  $\Vert \cdot \Vert_{A^{k+1}}$, where $k \in \mathbb{N}$ is the same integer introduced in the assumptions.
In other words, there exists  $C = C(t) \geq 0$ (depending now on $t>0$) such that 
 \begin{equation}\label{eq:Ct-diff}
\| \e^{tA}f\| _{C^{0,\alpha}(X)} + \| A \e^{tA}f\| _{C^{0,\alpha}(X)} \leq C(t) \Vert f \Vert_{L^r(X)}.
 \end{equation}
Thus, we have proved for $h \in \{0,1\}$ that
      \begin{align}\label{agn:semigrouppointwiselipschitz}
\vert  A^h \e^{tA}f(x) - A^h \e^{tA}f(x') \vert \leq C(t) \dX(x,x')^\alpha \Vert f \Vert_{L^r(X)}\qquad\hbox{for all } x,x'\in X.
      \end{align}
%{\color{red}Because the semigroup is differentiable, $p_t(\cdot,y)\in D(A^k)\hookrightarrow C^{0,\alpha}(X)$ for all $t>0$ and a.e.\ $y\in Y$;  in particular, $x\mapsto p_t(x,y)$ is a continuous function, which by assumption maps into $D\left((A_y^h)'\right)$ for a.e.\ $y\in X$.
%%By assumption we also have}
%% $p_t(x,\cdot) \in D((A_y^h)')$ for the operator $A \equiv A_y$
%%%Because by assumption $p_t(x,\cdot) \in D((A^h)')$
%%      %\footnote{\PB{Okay wir können unsere Defintion von Integralkern an dieser Stelle denke ich nicht weiter abschwächen (was aber in Ordnung sein sollte).}}
%%      for each $x \in X$,
%%       as $p_t \in L^\infty(X \times X)$, 
In view of \eqref{eq:pp'},
%\footnote{\PB{Sobald wir uns sicher sind, dass \autoref{lem:adjoint-intergral-kernel} stimmt, hier eher auf \autoref{lem:adjoint-intergral-kernel} beruhen.}}
for $h \in \{0,1 \}$, we observe that for all $f \in D(A)$
\begin{equation}\label{eq:dub}
\begin{split}
%    \Big\vert \big\langle A^h f, p_t(x,\cdot) - p_t(x',\cdot) \big\rangle_{L^r(X), L^{r'}(X)} \Big\vert &  =  \Big\vert \big\langle f, (A_y^h)' p_t(x,\cdot) - (A_y^h)' p_t(x',\cdot) \big\rangle_{L^r(X), L^{r'}(X)} \Big\vert\\
\Big\vert \big\langle f, (A_y^h)' p_t(x,\cdot) - (A_y^h)' p_t(x',\cdot) \big\rangle_{L^r(X), L^{r'}(X)} \Big\vert
     &= \left\vert \int_X f(y) (A_y^h)'\big(p_t(x,y)- p_t(x',y)\big)\dx[\mu(y)] \right\vert \\
&= \left\vert \int_X A^hf(y) \big(p_t(x,y)- p_t(x',y)\big)\dx[\mu(y)] \right\vert \\&= \left\vert \e^{tA} A^hf(x)- \e^{tA}A^hf(x') \right\vert \\&= \left\vert A^h\mathrm{e}^{tA}f(x) - A^h\mathrm{e}^{tA}f(x') \right\vert\\& \leq C(t)  \dX(x,x')^\alpha \Vert f \Vert_{L^r(X)}.
\end{split}
\end{equation}
As 
%$D(A^h)$ is a core for $A$, see \cite[Proposition~II.1.8]{EngNag00},
%\footnote{\DM{Zur Kenntnis: spätestens hier braucht man doch, dass $\e^{tA}$ start stetig ist. Du kannst diese Fußnote gerne entfernen, wenn Du damit einverstanden bist.}} 
%and 
$A$ is densely defined,
%\footnote{\PB{Ich denke hier fehlt, dass $f \in D(A)$ ist, oder? Davor zwar nicht aber für \eqref{eq:dub} müsste man das annehmen, oder übersehe ich etwas?}} 
%$A^h$ is also densely defined. Thus,
\eqref{eq:dub} also holds for every $f \in L^r(X)$ and we finally conclude that
%\footnote{\DM{Jetzt wo ich besser darüber nachdenke: ist diese nicht einfach die kanonische Norm des dualen Raums $L^{p'}$? Wenn Du damit einverstanden bist, würde ich einfach  ``Hence,'' schreiben.}}
\begin{align}\label{agn:heatkernellipschitzinxvariable}
\begin{aligned}
\big\Vert (A^h_y)' p_t(x,\cdot) - (A^h_y)' p_t(x',\cdot) \big\Vert_{L^{r'}(X)} &= \sup_{\Vert f \Vert_{L^r} =1}\big\vert \langle f, (A^h_y)' p_t(x,\cdot) - (A^h_y)' p_t(x',\cdot) \rangle_{L^r, L^{r'}} \big\vert \\&\leq C(t) \dX(x,x')^\alpha,
\end{aligned}
\end{align}
%\footnote{\DM{Ich habe den folgenden Satz entfernt:\\
%Using the same arguments for $(A^h)' p_t(\cdot,y)$, $y \in X$, we also see that
%\begin{align}\label{agn:heatkernellipschitzinyvariable}
%\big\Vert (A^h)' p_t(\cdot,y) - (A^h)' p_t(\cdot,y') \big\Vert_{L^{r'}(X)} \leq C(t) \dX(y,y')^\alpha.
%\end{align}}}
%{\color{red}In particular, $\overline{p_t(x,\cdot)}=p'_t(\cdot,x)\in D(A')\hookrightarrow L^{r'}(X)$ and hence ${p_t(x,\cdot)}\in L^{r'}(X)$ for all $t>0$ and a.e.\ $x\in X$.}
for $h \in \{0,1\}$. This shows (i).

(ii) Again because $p_t(x,\cdot)\in D(A_y')\hookrightarrow L^{r'}(X)$ for every $t>0$ and $x\in X$ 
%(observe that this is automatically satisfied 
%for $h=1$, 
%by definition of heat kernel) 
%and due to its symmetry in the self-adjoint case),
%Now let for some fixed $y \in X$ and $t>0$, $p_t(\cdot,y)$ belong to $L^r(X)$: then, using
we find by the property in \autoref{def:abstract-heat-kernel}.\eqref{item:solution-cauchy-problem}
 that
 \begin{align}\label{eq:tt2-selfadj}
 \begin{aligned}
 A_y' p_t(x,\cdot ) &= A_y' \int_X p_{\frac{t}{2}} (x,z) p_\frac{t}{2}(z,\cdot) \dx[\mu](z)\\
 & = A_y' \e^{\frac{t}{2} A_y'}p_\frac{t}{2}(x,\cdot) = \e^{\frac{t}{2}A_y'}A_y' p_\frac{t}{2}(x,\cdot) \\&= \int_X p_\frac{t}{2}(z,\cdot )\left(A_y'p_\frac{t}{2}(x,\cdot )\right)\! (z)  \dx[\mu](z),
\end{aligned}
 \qquad\hbox{for all }t>0\hbox{ and  }x\in X.
 \end{align}
 %\begin{equation}\label{eq:tt2}
 %A^h p_t(x,y)= \int_X A^h k_{\frac{t}{2}}(x,z) k_{\frac{t}{2}}(z,y) \dx[\mu](z)\quad\hbox{for all }t>0\hbox{ and }x,y\in X
 %\end{equation}
 %by the semigroup property (see, e.g., \cite[Theorem~3.3]{Gri06}), %mimicking the proof of \autoref{thm:abstractlipschitzmeasurespaces}
 Using Hölder's inequality,
 %and the fact that $A$ is self-adjoint,
 %, hence $A^h$ are self-adjoint, Proposition
 \eqref{agn:heatkernellipschitzinxvariable} implies that for all $t>0$, all $x,x'\in X$, and a.e.\ $y\in X$ there exists $C(t/2)>0$ such that
 \begin{equation}\label{eq:tt2-selfadj-prelim}
 \begin{aligned}
 \vert A_y' p_t(x,y) - A_y' p_t(x',y) \vert &\leq \int_X \vert A_y' p_\frac{t}{2}(x,z)- A_y' p_\frac{t}{2}(x',z) \vert \vert p_\frac{t}{2}(z,y) \vert \dx[\mu](z) \\&\leq \Vert p_\frac{t}{2}(\cdot,y) \Vert_{L^r(X)} \Vert A_y'p_\frac{t}{2}(x,\cdot) - A_y' p_{\frac{t}{2}}(x',\cdot) \Vert_{L^{r'}(X)} \\&\leq \Vert p_\frac{t}{2}(\cdot,y) \Vert_{L^r(X)}C(t/2)d(x,x')^\alpha.
 \end{aligned}
 \end{equation} 
 %\\&=: L_{t,y} \cdot d(x,x')^\alpha.
%where $L_{t,y} := \Vert k_\frac{t}{2}(\cdot,y) \Vert_{L^r(X)}C(t/2) \geq 0$ acts as Hölder constant depending on $t$ and $y$.
% Thus, we conclude that $X\ni x \mapsto A_y p_t(x,y)\in \KK$ is indeed  $\alpha$-Hölder continuous for all $t>0$ and a.e.\ $y\in X$.
 
%Assume now \eqref{eq:l2linfty}. 
Because $\e^{tA}$ is bounded from $L^r(X)$  to $L^\infty(X)$ for every $t>0$ (as $D(A) \hookrightarrow C^{0,\alpha}(X)$), by \autoref{lem:bukhv} we can take $\esssup$ over $y\in X$: this yields the claim.
% of the first assertion.
%; {\color{red}the second one follows immediately, taking into account \eqref{eq:pp'}.}
%\footnote{\PB{Mit Deinem Vorschlag funktioniert also alles. Die Annahme, dass $p_t \in D((A^h)')$ scheint mir für $p \neq 2$ etwas abstrakt zu sein. Was denkst Du?}}
%{\color{red}
%Likewise, we deduce as above that
%      \begin{align}\label{agn:semigrouppointwise-cont}
%\vert A^h \e^{tA}f(x) \vert \leq C(t) \Vert f \Vert_{L^r(X)}\qquad\hbox{for all } x\in X.
%      \end{align}
%Because by assumption $p_t(x,\cdot) \in D((A^h)')$
%      %\footnote{\PB{Okay wir können unsere Defintion von Integralkern an dieser Stelle denke ich nicht weiter abschwächen (was aber in Ordnung sein sollte).}}
%      for each $x \in X$,
%%       as $p_t \in L^\infty(X \times X)$, 
%we observe that
%\begin{equation}\label{eq:dub}
%\begin{split}
%    \Big\vert \big\langle f, (A^h)' p_t(x,\cdot) \big\rangle_{L^r(X), L^{r'}(X)} \Big\vert 
%    &=\Big\vert \big\langle A^hf, p_t(x,\cdot)  \big\rangle_{L^r(X), L^{r'}(X)} \Big\vert \\
%    &= \left\vert \int_X A^hf(y) p_t(x,y)\dx[\mu(y)] \right\vert \\
%    &= \left\vert \e^{tA} A^hf(x) \right\vert \\
%    &= \left\vert A^h\mathrm{e}^{tA}f(x) \right\vert\\
%    & \leq C(t)  \Vert f \Vert_{L^r(X)},
%\end{split}
%\end{equation}
%hence as above
%\begin{align}\label{agn:heatkernelcontinuousinxvariable}
%\big\Vert (A^h)' p_t(x,\cdot) \big\Vert_{L^{r'}(X)} \leq C(t).
%\end{align}
%
%}
\end{proof}

\begin{rem}\label{rem:generalization-a^h}
%\footnote{\PB{Ich habe es einmal als Remark festgehalten, falls wir uns doch entscheiden sollten zum allgemeinen Fall zurückzukehren. Edit: Vielleicht nur einmal am Ende des Papers (also von Abschnitt 3) erwähnen!}}
Observe that \eqref{eq:dub} even holds for \emph{every} $h \in \NN_0$ under the assumption that 
\begin{equation}\label{eq:assumh}
(A_y^h)'p_t(x,\cdot) \in L^{r'}(X)\qquad\hbox{for all }t>0\hbox{ and a.e. }x\in X,
\end{equation}
 since $D(A^h)$ is a core for $A$ for every $h \in \NN_0$, see \cite[Proposition~II.1.8]{EngNag00}, thus leading to $\alpha$-Hölder continuity of $x\mapsto (A_y^h)'p_t(x,\cdot)$ for all $t>0$.
%, and $A$ is densely defined and so, $A^h$ is also densely defined. 
However, as the condition \eqref{eq:assumh} seems to be difficult to check in practice, we did not include these cases in our result.
\end{rem}
\begin{rem}\label{rem:heat-kernel-buc-second-variable}
If the underlying operator $A$ is in addition self-adjoint on $L^2(X)$, we can refine the statement of \autoref{lem:lipschitzonecoordinate} as follows: if we suppose that $D(A^k)$ is continuously embedded in $C_b(X)$ (resp., $C_{ub}(X)$) -- the space of bounded, continuous (resp., uniformly continuouos) functions on $X$ -
%rather than $D(A^k) \hookrightarrow C^{0,\alpha}(X)$ for some $\alpha \in (0,1]$, 
it is possible to deduce the continuity (resp., uniform continuity) of the corresponding heat kernel $p_t$ in the second variable, that is, of the map $y \mapsto p_t(x,y) \in \mathbb{K}$ for $t>0$ and all $x \in X$: indeed, if $D(A^k) \hookrightarrow C_{b}(X)$ for some $k$, it follows that $\e^{tA}: L^2(X) \hookrightarrow C_{b}(X)$ and therefore, by self-adjointness, $\e^{tA} = \e^{\frac{t}{2} A} \e^{\frac{t}{2} A}$ is a bounded operator from $C_{b}(X)' $ to $C_{b}(X)$: in particular,  for $x \in X$ and $\delta_x \in C_{b}(X)'$ we find $p_t(x,\cdot) = \e^{tA}\delta_x \in C_{b}(X)$ for every $x \in X$; the same approach yields uniform continuity of $p_t(x,\cdot)$, if $D(A^k) \hookrightarrow C_{ub}(X)$ for some $k$.

%While
Under the assumption that $D(A^k) \hookrightarrow C^{0,\alpha}(X)$, the same idea may also be adapted to deduce Hölder continuity of the heat kernel with respect to the second variable,
%, unlike \autoref{lem:lipschitzonecoordinate}.(i),
but this would not yield that the map $x \mapsto p_t(x,\cdot)$ is of class $C^{0,\alpha}(X;D(A_y'))$.
\end{rem}

\begin{rem}
%\footnote{\DM{Findest Du das klar genug? Oder wiederum zu lang?} \PB{Ich finde es eigentlich sehr verständlich so und nicht zu viel!}}
The proof of \autoref{lem:lipschitzonecoordinate} can be carried out equivalently, but sligthly more abstractly, as follows: by~\eqref{eq:Ct-diff}, it follows that there is a modulus of continuity $\omega$ such  that, for both $h \in \{0,1\}$, all $f\in D(A)$ and hence $\in L^r(X)$, and all $x,x'\in X$ s.t.\ $\mathrm{d}_X(x,x')\leq \delta$,
        \begin{align}\label{agn:semigrouppointwiselipschitz-modulus}
\vert A^h \e^{tA}f(x) - A^h \e^{tA}f(x') \vert \leq C(t)\omega(\delta) \|f\|_{L^r(X)},
      \end{align}
whence 
\begin{equation}\label{eq:dub-modulus}
\begin{split}
\Big\vert \big\langle f, (A_y^h)' p_t(x,\cdot) - (A_y^h)' p_t(x',\cdot) \big\rangle_{L^r(X), L^{r'}(X)} \Big\vert
 \leq C(t)  \omega(\delta) \Vert f \Vert_{L^r(X)}
\end{split}
\end{equation}
and eventually
\begin{align}\label{agn:heatkernellipschitzinxvariable-modulus}
\begin{aligned}
\big\Vert (A^h_y)' p_t(x,\cdot) - (A^h_y)' p_t(x',\cdot) \big\Vert_{L^{r'}(X)}\leq C(t) \omega(\delta).
\end{aligned}
\end{align}
This proves (i), as we already know that $\omega (\delta)\lesssim \delta^\alpha$ by~\eqref{eq:Ct-diff}; thus suggesting that \autoref{lem:lipschitzonecoordinate} can be adapted to deduce different smoothness properties of $x\mapsto p_t(x,\cdot)$, as soon as they can be described in terms of the properties of a modulus of continuity. This is, e.g., the case if $D(A^k)$ is continuously embedded into the space
$\Din(X):=\left\{h\in C_b(X):\frac{\omega_h}{\id}\in L^1(0,1)\right\}$ of bounded Dini continuous functions. Because
\[
h\mapsto \int_0^1 \frac{\omega_h(s)}{s}\dx[s],
\]
where
\[
\omega_h(\delta) := \sup_{\mathrm{d}_X(x,y) \leq \delta} \vert h(x) - h(y) \vert, \qquad \text{ for }h \in C^{0,\mathrm{Dini}}(X)
\]
is a semi-norm on $\Din(X)$, \eqref{eq:Ct-diff} implies again \eqref{agn:semigrouppointwiselipschitz-modulus}, \eqref{eq:dub-modulus} and finally~\eqref{agn:heatkernellipschitzinxvariable-modulus}, but this time for a modulus of continuity $\omega$ that satisfies
$\int_0^1 \frac{\omega(s)}{s}\dx[s]<\infty$. In other words: if under the assumptions of~\autoref{lem:lipschitzonecoordinate} $D(A^k)$ is embedded into $\Din(X)$ for some $k\in \NN$, then again $A$ is associated with a heat kernel and $X\ni x\mapsto p_t(x,\cdot)\in D(A'_y)$ is Dini continuous for all $t>0$.
\end{rem}

While every analytic semigroup is differentiable, the converse is not true, cf.~\cite[Section~II.4]{EngNag00}, see e.g.~\cite[Theorem A.3 and Remark A.2]{CheTri89}, \cite{Ren95}, and~\cite[Theorem~2.3 and Remark 2.7]{Bat04} for concrete examples in applications to elastic systems,  
%Schrödinger operators on (disconnected, infinite) directed graphs, 
birth-type processes, or delay differential equations, respectively. Under the stronger assumption of analyticity, we can sharpen our above result and recover a more accurate description of the dependence on $t$.
% provided $(\e^{tA})_{t\ge 0}$ is not only differentiable, but even analytic, see \autoref{cor:analyt-improv} below. 
% {\color{red}and obtain that the kernel is of class $C^{0,\alpha}(X;D\left(A^h)'\right)$.}

\begin{cor}\label{cor:analyt-improv}
Under the Assumptions of \autoref{lem:lipschitzonecoordinate}, let additionally the semigroup generated by $A$ be bounded analytic. Then there exist constants $C_1,C_2>0$ such that
%\begin{align}\label{agn:heatkernellipschitzinxvariable-analy}
%\big\Vert (A^h)' p_t(x,\cdot) - (A^h)' p_t(x',\cdot) \big\Vert_{L^{r'}(X)} \leq (C_1 t^{-h}+C_2 t^{-h-k}) 
%\dX(x,x')^\alpha
%\end{align}
\begin{align}\label{agn:heatkernellipschitzinxvariable-analy}
\Vert p_t(x,\cdot) - p_t(x',\cdot) \Vert_{A'} \leq (C_1 + C_2t^{-(k+1)})\dX(x,x')^\alpha
\end{align}
for all $t>0$, $x,x' \in X$.
%{\color{cyan}(resp.,
%\begin{align}\label{agn:heatkernellipschitzinyvariable-analy}
%\big\Vert (A^h)' p_t(\cdot,y) - (A^h)' p_t(\cdot,y') \big\Vert_{L^{r'}(X)} \leq (C_1 t^{-h}+C_2 t^{-h-k}) \dX(y,y')^\alpha\qquad\hbox{for all }t>0,\ x,x'\in X\hbox{)},
%\end{align}}
%\footnote{\PB{Da wir Proposition \ref{lem:lipschitzonecoordinate} ja nur noch für die erste Koordinate formulieren (was ich auch besser finde) können wir diesen Teil weglassen, oder? :)}}
%provided $p_t(x,\cdot) \in D((A^h)')$ for some $h\in \mathbb{N}_0$, all $t>0$, and all $x \in X$. 
%{\color{cyan} In particular, for every bounded analytic semigroup $(e^{tA})_{t \ge 0}$ there exist some constants $C_1,C_2,C_3,C_4 > 0$ such that
%\begin{align}
%\Vert p_t(x,\cdot) - p_t(x',\cdot) \Vert_{D(A')} \leq (C_1+C_2t^{-1} + C_3t^{-k}+C_4t^{-(k+1)})\dX(x,x')
%\end{align}
%for all $t>0$, $x,x' \in X$.}\footnote{\PB{Hier können wir noch überlegen. Prinzipiell fände ich es in Ordnung, wenn wir dieses Korollar -- zwecks vorigem Remark 3.2 -- für alle $h \in \NN_0$ formulieren, da der Beweis sowieso nur eine Anwendung von \cite[Proposition~2.1.1]{Lun95} ist. Aber ich bin offen für Gegenvorschläge! :) Edit: Vielleicht doch nur für $h=1$. Eher am Ende einmal erwähnen! Das macht das ganze lesbarer.}}
%(resp., $ p_t(\cdot,y) \in D((A^h)')$ for some $h \in \mathbb{N}_0$, all $t>0$ and all $y \in X$).
\end{cor}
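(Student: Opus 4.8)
The plan is to revisit the proof of \autoref{lem:lipschitzonecoordinate} and merely keep track of how the constant $C(t)$ in \eqref{eq:Ct-diff} depends on $t$, now exploiting the extra smoothing afforded by analyticity. First I observe that, since $\Vert\cdot\Vert_{A'}$ denotes the graph norm on $D(A')$, the left-hand side of \eqref{agn:heatkernellipschitzinxvariable-analy} is nothing but the sum over $h\in\{0,1\}$ of the quantities estimated in \eqref{agn:heatkernellipschitzinxvariable}; hence
\[
\Vert p_t(x,\cdot)-p_t(x',\cdot)\Vert_{A'} \leq 2\,C(t)\,\dX(x,x')^\alpha ,
\]
and it only remains to bound $C(t)$ by $C_1+C_2 t^{-(k+1)}$, the harmless factor $2$ being absorbed into the constants.

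Next I trace $C(t)$ back through \eqref{eq:abschaetzung-ah-times-semigroup}: up to the fixed embedding constants of $D(A^k)\hookrightarrow C^{0,\alpha}(X)$, the quantity $C(t)$ is controlled by $\Vert \e^{tA}f\Vert_{A^k}$ and $\Vert A\e^{tA}f\Vert_{A^k}$ relative to $\Vert f\Vert_{L^r(X)}$. Because $A^j$ is closed and $D(A^k)\subseteq D(A^j)$ for $0\le j\le k$, the closed graph theorem shows that the graph norm $\Vert\cdot\Vert_{A^k}$ is equivalent to $\sum_{j=0}^k \Vert A^j\cdot\Vert_{L^r(X)}$, so it suffices to estimate each $\Vert A^j \e^{tA}f\Vert_{L^r(X)}$ separately. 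This is the only place where analyticity enters: for a bounded analytic semigroup one has the classical smoothing bounds $\Vert A^j \e^{tA}\Vert_{\mathcal L(L^r(X))}\le c_j\, t^{-j}$ for every $j\in\NN$ and all $t>0$ (see \cite[Chapter~II, Section~4.a]{EngNag00}), while $\Vert \e^{tA}\Vert_{\mathcal L(L^r(X))}\le c_0$ by the uniform boundedness assumed without loss of generality in the proof of \autoref{lem:lipschitzonecoordinate}. Summing over the relevant ranges of $j$ gives $\Vert \e^{tA}f\Vert_{A^k}\le \bigl(\sum_{j=0}^k c_j t^{-j}\bigr)\Vert f\Vert_{L^r(X)}$ and $\Vert A\e^{tA}f\Vert_{A^k}\le \bigl(\sum_{j=1}^{k+1} c_j t^{-j}\bigr)\Vert f\Vert_{L^r(X)}$, whence
\[
C(t)\ \le\ \tilde C\sum_{j=0}^{k+1} t^{-j}\qquad\text{for all }t>0 ,
\]
for a suitable constant $\tilde C>0$ incorporating the embedding constants.

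Finally I collapse this polynomial in $t^{-1}$ into the two stated terms via the elementary inequality $t^{-j}\le 1+t^{-(k+1)}$, valid for every $0\le j\le k+1$ and every $t>0$ (indeed $t^{-j}\le 1$ when $t\ge 1$ and $t^{-j}\le t^{-(k+1)}$ when $0<t<1$). This yields $C(t)\le C_1+C_2 t^{-(k+1)}$ with $C_1=C_2=(k+2)\tilde C$, and combining with the first display gives \eqref{agn:heatkernellipschitzinxvariable-analy}.

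I do not anticipate a genuine obstacle: the argument is essentially bookkeeping layered on top of \autoref{lem:lipschitzonecoordinate}. The only two points requiring a moment's care are the equivalence of $\Vert\cdot\Vert_{A^k}$ with $\sum_{j=0}^k\Vert A^j\cdot\Vert_{L^r(X)}$ (so that the analytic estimates for the individual powers $A^j$ may be summed) and the correct invocation of the smoothing bounds $\Vert A^j \e^{tA}\Vert\lesssim t^{-j}$, whose highest relevant order $j=k+1$ is precisely what produces the exponent $t^{-(k+1)}$.
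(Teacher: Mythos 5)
Your proposal is correct and follows essentially the same route as the paper: the paper likewise refines \eqref{eq:Ct-diff} via the analytic smoothing bounds $\Vert A^{j}\e^{tA}\Vert_{\mathcal L(L^r(X))}\lesssim t^{-j}$ (citing \cite[Proposition~2.1.1]{Lun95} where you cite Engel--Nagel), obtaining $C(t)\lesssim (1+t^{-k})+(t^{-1}+t^{-k-1})$, and then feeds this into \eqref{agn:heatkernellipschitzinxvariable} exactly as you do. Your additional details -- the equivalence of the graph norm $\Vert\cdot\Vert_{A^k}$ with $\sum_{j=0}^{k}\Vert A^j\cdot\Vert_{L^r(X)}$ and the collapsing of the polynomial in $t^{-1}$ into $C_1+C_2t^{-(k+1)}$ -- are steps the paper leaves implicit, and you fill them in correctly.
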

\begin{proof}
The claimed estimate follows observing that, for bounded analytic semigroups, the bound \eqref{eq:Ct-diff} can be more precisely specified: in view of \cite[Proposition~2.1.1]{Lun95}, \eqref{eq:Ct-diff} can be re-formulated as
      \begin{align}
      \begin{aligned}\label{eq:abschaetzung-ah-times-semigroup-2}
 \| \e^{tA}f \|_{C^{0,\alpha}(X)} + \| A\e^{tA}f \|_{C^{0,\alpha}(X)} &\leq \left( C_1(1+t^{-k})  + C_2(t^{-1}+t^{-k-1})\right)\Vert \e^{tA}f \Vert_{L^r(X)}.
 \end{aligned}
	      \end{align}
%
% we obtain
% \begin{equation}\label{eq:Ct-diff-anal}
%\| A^h \e^{tA}f\| _{C^{0,\alpha}(X)} \leq (C_1 t^{-h}+C_2 t^{-h-k}) \Vert f \Vert_{L^r(X)}\qquad \hbox{for all }t>0,
% \end{equation}
%for all $h \in \NN_0$ in view of \cite[Theorem~II.4.6]{EngNag00}.
%\cite[Proposition~2.1.1]{Lun95}. 
Accordingly, \eqref{agn:heatkernellipschitzinxvariable} 
% {\color{cyan}and \eqref{agn:heatkernellipschitzinyvariable}} 
becomes \eqref{agn:heatkernellipschitzinxvariable-analy}.
% {\color{cyan} and \eqref{agn:heatkernellipschitzinyvariable-analy}, respectively.}
\end{proof}

%\begin{rem}
%Let us sketch a more abstract approach to the ideas presented above. Recall, see~\cite[Section~II.5]{EngNag00}, that the Favard space $F_\alpha$ associated with a strongly continuous, bounded,  \textit{analytic} semigroup $(\e^{tA})_{t\ge 0}$ on a Banach space $E$ is, for every $\alpha\in (0,1]$,
%\[
%F_\alpha :=\left\{f\in E:\sup_{t>0} \| t^{1-\alpha}A\e^{tA}f\|<\infty \right\};
%\]
%this is a Banach space when endowed with the norm
%\[
%\|f\|_{F_\alpha}:=\sup_{t>0}\|t^{1-\alpha}A\e^{tA}f\|
%\]
%and $D(A)\hookrightarrow F_\alpha\hookrightarrow F_{\alpha'}$ for all $1\ge \alpha>\alpha'>0$ (indeed, $D(A)=F_1$ if $E$ is reflexive, see \cite[Corollary~II.5.21]{EngNag00}).
%Accordingly,
%\[
%\begin{split}
%\sup_{t>0}\|t^{1-\alpha}A\e^{2tA}f\|=\| \e^{tA}f \|_{F_\alpha} &\leq \widetilde{C}\big(\Vert\e^{tA}f \Vert + \Vert \e^{tA}f \Vert_{A}\big)\le Ct^{-1}\|f\|
%\end{split}
% \]
%\end{rem}

\begin{rem}\label{rem:additional-conds}
The assumptions of \autoref{cor:analyt-improv} agree with those of \cite[Theorem~1]{Ouh98}: let us compare both results.

(i) If $r=1$, a close look at the proof of \cite[Theorem~1]{Ouh98} shows that the case of $\eta=1=\alpha$ -- corresponding to \textit{Lipschitz} continuity -- is actually admissible, although not explicitly stated there. Indeed, it is not difficult to check that both \autoref{cor:analyt-improv} and \cite[Theorem~1]{Ouh98} present two -- different but similar -- sets of assumptions that each imply Hölder (or even Lipschitz) continuity of the heat kernel of semigroups on $L^1$ spaces: in particular, we can drop Ouhabaz' assumption of ultracontractivity, at the price of replacing his  strong Gagliardo--Nirenberg-type inequality by an abstract Morrey-type inequality. We leave the details to the reader.

(ii) Let now $r>1$: then not only our assumption, but also our bound does differ from that in \cite[Theorem~1]{Ouh98}. E.g., for $r=2$ \autoref{cor:analyt-improv} can
%, in view of \autoref{rem:special-cases},
%\footnote{\PB{Was meinen wir damit genau? Vielleicht diesen Satz weglassen?} \DM{Du hast Recht, ich hatte eine falsche Referenz angegeben. Ich habe sie korrigiert.}} 
be stated as follows: Given $\alpha\in (0,1]$, if $D(A^k)\hookrightarrow C^{0,\alpha}(X)$ there holds
\begin{equation}\label{eq:ouh-beding-2a}
\big\Vert p_t(x,\cdot) - p_t(x',\cdot) \big\Vert_{A'} 
\leq Ct^{-k} \dX(x,x')^\alpha\qquad \hbox{for all }t>0,\ x,x'\in X,
%\quad\hbox{and}\quad
%\big\Vert p_t(\cdot,y) - p_t(\cdot,y') \big\Vert_{L^2(X)} \leq C(t)
% \dX(y,y')^\alpha\qquad \hbox{for all }t>0,\ x,x'\in X;
\end{equation}
whereas \cite[Theorem~1]{Ouh98} reads as follows: 
Given $\alpha\in (0,1]$, there holds
\begin{equation}\label{eq:ouh-beding-2}
\| p_t(x,\cdot) - p_t(x',\cdot)\|_{L^\infty(X)} \le  Ct^{-k} \dX(x,x')^\frac{\alpha}{2}\qquad \hbox{for all }t>0,\ x,x'\in X,
\end{equation}
provided $(\e^{tA})_{t\ge 0}$ is ultracontractive and
\begin{equation}\label{eq:ouh-beding-2-b}
\sup_{x \neq x'}\frac{|f(x)-f(x')|}{\dX(x,x')^\frac{\alpha}{2}}\le C'' \|f\|^{1-\frac{k}{\beta}}_{L^1(X)} \|(-A)^\frac{\beta}{2} f\|^{\frac{k}{\beta}}_{L^1(X)}\qquad\hbox{for all }f\in D(A^\frac{\beta}{2}).
\end{equation}
In particular, it is unclear whether the proof of \cite[Theorem~1]{Ouh98} can be adapted to yield $\alpha$-Hölder continuity for any $\alpha>\frac{1}{2}$. Observe that \eqref{eq:ouh-beding-2} is indeed implied by our estimate in \eqref{eq:ouh-beding-2a} if one additionally assumes that $D(A')$ is embedded in $L^\infty(X)$.
\end{rem}

We are  now in the position to prove our main result about joint Hölder continuity of the heat kernel associated with a generator $A$ as in \autoref{lem:lipschitzonecoordinate}.
\begin{theo}[Joint Hölder continuity]\label{thm:abstractlipschitzmeasurespaces}
%\footnote{\DM{Es ist etwas unschön, dass im Haupptheorem man auf die Annahmen von einem Lemma hinweist. Wir sollten sie hier wiederholen, zumal sonst die Rolle von $\alpha$ völlig unklar bleibt.}}
Let $r \in (1,\infty)$ and let a
closed, densely defined, linear operator $A$ on $L^r(X)$
 generate a differentiable semigroup $(\e^{tA})_{t \geq 0}$ on $L^r(X)$.
Let furthermore $D(A^k) \hookrightarrow C^{0, \alpha}(X)$, and also
 $D((A')^l) \hookrightarrow C^{0,\alpha}(X)$
%\footnote{\PB{Hier darauf eingehen, warum diese Bedingung hier gebraucht wird aber nicht in \autoref{thm:abstractlipschitzmeasurespaces}. Zudem die Frage des Referees beantworten.}}
for some $k,l \in \mathbb{N}$ and some $\alpha\in (0,1]$.

Then the following assertions hold.

(i) $p_t\in C^{0,\alpha}(X \times X)$ for all $t>0$. 

(ii) 
If $r=2$ and $A = A'$ as operators on $L^2(X)$, then
%\footnote{\PB{Leider habe ich mittlerweile doch die Befürchtung, dass man das braucht.}}
 $A_x A_y p_t:X \times X \to \mathbb{K}$
%\footnote{\PB{Ich habe den Eindruck, dass wir mittels Banachraumadjungierter zeigen können, dass $A_x'A_y'p_t$ Hölder stetig ist, denn wir haben nach Definition 2.3.(iii), dass $p_t(\cdot,y) \in D(A_x)$. und $p_t(\cdot,y) \in D(A_y)$ liegen.}}
%\footnote{\PB{Hier hatten wir zuvor geschrieben: "then $A_x A_y p_t:X \times X \to \mathbb{K}$ is $\alpha$-Hölder continuous" aber ich denke so wie wir es beweisen, zeigen wir eher die Hölderstetigkeit von  $A_x A_y p_t:X \times X \to \mathbb{K}$ oder übersehe ich etwas? Ich habe dahingehend \eqref{eq:ax-ay-kernel} etwas angepasst und in der Abschätzung danach die komplexe Konjugation vertauscht. Wenn Du einverstanden bist, kannst Du diese Fußnote einfach wieder löschen! :) (und dann müsste noch in \autoref{prop:resume-qgraph}(ii) $\Delta_y^\Graph \Delta_x^\Graph p_t^\Graph$ durch $\Delta_x^\Graph \Delta_y^\Graph p_t^\Graph$ ersetzt werden, aber wahrscheinlich macht es insgesamt sowieso keinen Unterschied, da wir sowieso Selbstadjungiertheit haben).}} 
is $\alpha$-Hölder continuous for all $t>0$. 

%{\color{gray}
%Additionally, let $p=2$, and let $A$ be self-adjoint. If $x \mapsto p_t(x,\cdot)$ and $y \mapsto p_t(\cdot,y)$ are of class $L^\infty(X;D(A))$,
% for all $t>0$
%% \footnote{\DM{Ich verstehe nicht mehr, warum die beiden paare Bedingungen nicht jeweils nur eine Bedingung sind: da $A$ selbstadjungiert ist, und somit $p_t(x,y)=p_t(y,x)$, sollte ja klar sein, dass 
%% \[
%%x \mapsto A_yp_t(x,\cdot)\in L^\infty(X;L^2(X)) \quad \Leftrightarrow\quad
%%y \mapsto A_xp_t(\cdot,y)\in L^\infty(X;L^2(X)).
%%\]
%%Ähnlich gilt m.E., dass
%%\[
%%A_yp_t\in D(A_x) \quad \Leftrightarrow \quad A_x p_t\in D(A_y)
%%\]
%% }}
%\begin{align}\label{eq:additional-property-joint-hoelder-2}
%\hbox{both }x \mapsto A_yp_t(x,\cdot)\hbox{ and }y \mapsto A_xp_t(\cdot,y)\hbox{ belong to }L^\infty(X;L^2(X))
%\end{align}
%and
%{\color{red}for a.e.\ $y\in X$ $A_yp_t(\cdot,y)\in D(A_x)$} (resp., $A_x p_t\in D(A_y)$), then also $A_x A_y p_t:X\times X\to \KK$ (resp., $A_yA_xp_t :X \times X\to \KK$)
%is $\alpha$-Hölder for all $t > 0$.
%}
\end{theo}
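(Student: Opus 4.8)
The plan is to obtain joint Hölder continuity by feeding the one-variable estimate of \autoref{lem:lipschitzonecoordinate} — applied \emph{once to $A$ and once to its adjoint $A'$} — into the reproducing identity of \autoref{def:abstract-heat-kernel}.\eqref{prop:semig}. Applying \autoref{lem:lipschitzonecoordinate}.(i) to $A$ (using $D(A^k)\hookrightarrow C^{0,\alpha}(X)$) gives, for each $t>0$, a constant with
\[
\|p_{t}(x,\cdot)-p_{t}(x',\cdot)\|_{L^{r'}(X)}\le C(t)\,\dX(x,x')^{\alpha}.
\]
Since $r\in(1,\infty)$, the space $L^r(X)$ is reflexive, so the adjoint semigroup $(\e^{tA})'=\e^{tA'}$ is again strongly continuous and differentiable (the smoothing $\e^{tA}\colon L^r(X)\to D(A^n)$ dualizes to $\e^{tA'}\colon L^{r'}(X)\to D((A')^n)$), with generator $A'$ and, by \autoref{lem:easy-semigr}.(iv), heat kernel $p'_t(x,y)=p_t(y,x)$. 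The hypothesis $D((A')^l)\hookrightarrow C^{0,\alpha}(X)$ then allows me to apply \autoref{lem:lipschitzonecoordinate} to $A'$ and, after inserting \eqref{eq:pp'}, to deduce
\[
\|p_{t}(\cdot,y)-p_{t}(\cdot,y')\|_{L^{r}(X)}\le C'(t)\,\dX(y,y')^{\alpha}.
\]

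For (i) I would then write $p_t(x,y)=\int_X p_{t/2}(x,z)p_{t/2}(z,y)\,\dx[\mu(z)]$, add and subtract the term $p_{t/2}(x',z)p_{t/2}(z,y)$, and bound the two resulting integrals by Hölder's inequality. The cross factors $\|p_{t/2}(\cdot,y)\|_{L^r(X)}$ and $\|p_{t/2}(x',\cdot)\|_{L^{r'}(X)}$ are bounded uniformly in $y$ resp.\ $x'$ by \autoref{lem:bukhv}, because $\e^{(t/2)A}\colon L^r(X)\to L^\infty(X)$ and $\e^{(t/2)A'}\colon L^{r'}(X)\to L^\infty(X)$ are bounded (both embeddings factor through $L^\infty$). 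This yields $|p_t(x,y)-p_t(x',y')|\le C(t)\big(\dX(x,x')^\alpha+\dX(y,y')^\alpha\big)$, and the elementary inequality $a^\alpha+b^\alpha\le 2^{1-\alpha}(a+b)^\alpha$, valid for $\alpha\in(0,1]$, converts this into a bound by $\mathrm{d}_{X\times X}\big((x,y),(x',y')\big)^\alpha$. Joint boundedness of $p_t$ follows from the same Hölder estimate, so $p_t\in C^{0,\alpha}(X\times X)$.

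For (ii), with $r=2$ and $A=A'$, the kernel is symmetric, $p_t(x,y)=p_t(y,x)$, and the semigroup is automatically analytic. Writing $v_s(x,z):=A_x p_s(x,z)$ for the first-variable action of $A$, I would differentiate the reproducing law in both slots — justified exactly as in \eqref{eq:tt2-selfadj} via closedness of $A$ and \autoref{def:abstract-heat-kernel}.\eqref{item:solution-cauchy-problem}-\eqref{prop:semig} — and use symmetry of $p_{t/2}$ to identify $A_y p_{t/2}(z,y)$ with $v_{t/2}(y,z)$, obtaining
\[
A_xA_y\, p_t(x,y)=\int_X v_{t/2}(x,z)\,v_{t/2}(y,z)\,\dx[\mu(z)].
\]
Now $v_{t/2}$ is precisely the kernel of $A\e^{(t/2)A}\colon L^2(X)\to L^\infty(X)$, so by \autoref{lem:bukhv} the map $x\mapsto v_{t/2}(x,\cdot)\in L^2(X)$ is bounded; it is also $\alpha$-Hölder, since \autoref{lem:lipschitzonecoordinate}.(i) controls $\|A_y p_{t/2}(x,\cdot)-A_y p_{t/2}(x',\cdot)\|_{L^2(X)}$ and, $A\e^{(t/2)A}$ being self-adjoint, its kernel $v_{t/2}$ is symmetric, which turns this into the desired bound for the first slot of $v_{t/2}$. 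The same add–subtract-plus-Cauchy–Schwarz computation as in (i) then delivers joint $\alpha$-Hölder continuity of $A_xA_y p_t$.

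The main obstacle I anticipate is the transfer of \emph{all} hypotheses of \autoref{lem:lipschitzonecoordinate} to the adjoint $A'$ — in particular that $A'$ again generates a \emph{differentiable} semigroup — which rests on reflexivity of $L^r(X)$ for $r\in(1,\infty)$ and on dualizing the smoothing property recorded at the start of the proof of \autoref{lem:lipschitzonecoordinate}; and, for (ii), the rigorous justification of pulling $A_x,A_y$ inside the reproducing integral together with the symmetry identity $A_y p_{t/2}(z,y)=v_{t/2}(y,z)$, which should be handled as in \eqref{eq:tt2-selfadj}. Everything else is the routine Hölder/Cauchy–Schwarz bookkeeping already used for part (i).
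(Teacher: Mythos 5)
Your proposal is correct and follows essentially the same route as the paper: apply \autoref{lem:lipschitzonecoordinate} to both $A$ and $A'$ (legitimate since $L^r(X)$ is reflexive for $r\in(1,\infty)$, so $A'$ is densely defined and its semigroup inherits differentiability), then combine the two one-variable Hölder estimates with the Chapman--Kolmogorov identity, Hölder's inequality, and the $\esssup$ bounds from \autoref{lem:bukhv}; part (ii) is likewise handled via the bilinear representation of $A_xA_yp_t$ through the time-$t/2$ kernels, exactly as in \eqref{eq:ax-ay-kernel}. Your explicit justification that differentiability passes to the adjoint semigroup, and your use of the symmetry of the kernel of the self-adjoint operator $A\e^{(t/2)A}$ in place of the paper's identification of that kernel with $A'_yp_t$, are only presentational variants of the paper's argument.
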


\begin{proof}
(i) Let $t > 0$. 
As $p_t(x,\cdot) \in L^{r'}(X)$
%\sout{, $p_t(\cdot,y)$} 
and $p_t(\cdot,y) \in L^r(X)$
for all $t > 0$, $x\in X$, the mapping $x \mapsto p_t(x,\cdot)$ and, likewise (upon replacing the role of $A_y$ by $A_x'$), $y \mapsto p_t(\cdot,y)$ belong to $C^{0,\alpha}(X;L^{r'}(X))$ and $C^{0,\alpha}(X;L^r(X))$, respectively by \autoref{lem:lipschitzonecoordinate}.
 %and \autoref{rem:special-cases}.(1). 
As in the proof of \autoref{lem:lipschitzonecoordinate}.(ii), we know that $\e^{tA}\big( L^r(X)\big)\subset C^{0,\alpha}(X) \hookrightarrow L^\infty(X)$ as well as $\e^{tA'}\big( L^{r'}(X)\big)\subset C^{0,\alpha}(X) \hookrightarrow L^\infty(X)$: by \cite[Theorem 1.3]{AreBuk94}, this implies that the maps $x \mapsto p_t(x,\cdot)$ and $y \mapsto p_t(\cdot,y)$
%\footnote{\PB{Falls \autoref{lem:adjoint-intergral-kernel} stimmt, umformulieren!}} 
are of class $L^\infty(X;L^{r'}(X))$ and $L^\infty(X;L^r(X))$, respectively for all $t>0$. Thus,  let
\begin{equation}\label{eq:mt}
M(t):=\max \bigg(\esssup_{x \in X} \Vert p_\frac{t}{2}(x,\cdot) \Vert_{L^{r'}(X)},\esssup_{y \in X} \Vert p_\frac{t}{2}(\cdot, y) \Vert_{L^r(X)} \bigg)
\end{equation}
which is for all $t>0$ finite by assumption. Then, for all $(x,y),(x',y') \in X \times X$ we find
%{\color{red}\begin{align*}
%\left\vert \mathbb{A}^h
%p_t(x,y) -  \mathbb{A}^h p_t(x',y') \right\vert 
%&= \bigg\vert \left( \mathbb{A}^h \e^{\frac{t}{2}A} k_\frac{t}{2}(x,\cdot)\right)(y) - \left( \mathbb{A}^h \e^{\frac{t}{2}A} k_\frac{t}{2}(x',\cdot)\right)(y') \bigg\vert\\
%&\approx \bigg\vert \left( \mathbb{A}^h \left(\phi_\frac{t}{2}(x)\otimes \e^{\frac{t}{2}A}\psi_\frac{t}{2}(y)\right) -\mathbb{A}^h \left(\phi_\frac{t}{2}(x')\otimes \e^{\frac{t}{2}A}\psi_\frac{t}{2}(y')\right)\right) \bigg\vert\\
%&\approx \bigg\vert \left( \left(A^h \phi_\frac{t}{2}(x)\otimes A^h \e^{\frac{t}{2}A}\psi_\frac{t}{2}(y)\right) - \left(A^h \phi_\frac{t}{2}(x')\otimes A^h \e^{\frac{t}{2}A}\psi_\frac{t}{2}(y')\right)\right) \bigg\vert\\  
%  &\leq  \big\Vert A^h k_\frac{t}{2}(x,\cdot) \big\Vert_{L^r(X)}\big\Vert A^h \e^{\frac{t}{2}A} \psi_\frac{t}{2}(y) - A^h \e^{\frac{t}{2}A} \psi_\frac{t}{2}(y') \big\Vert_{L^{r'}(X)}\\
%  &\qquad + \big\Vert A^h k_\frac{t}{2}(\cdot,y') \big\Vert_{L^r(X)}\big\Vert A^h k_\frac{t}{2}(x,\cdot) -A^h k_\frac{t}{2}(x',\cdot) \big\Vert_{L^{r'}(X)} \\&\leq M(t) C(t/2) (\dX(x,x') +  \dX(y, y')) \\
%  &= L_t \cdot d_{X\times X}((x,y),(x',y')).
%\end{align*}
%\footnote{\DM{Vielleicht $\mathcal A^h p_t(x,y):=A^h p_t(x,\cdot)\otimes A^h p_t(\cdot,y)$? Kann man die Ideen in~\cite[Section~5.1]{Hor90} anwenden?}}
\begin{align}\label{eq:joint-hoelder-without-h-estimate}
\begin{aligned}
\left\vert p_t(x,y) - p_t(x',y') \right\vert &= \bigg\vert \int_X p_\frac{t}{2}(x,z) p_\frac{t}{2}(z,y) \dx[\mu(z)] - \int_X p_\frac{t}{2}(x',z) p_\frac{t}{2}(z,y') \dx[\mu(z)] \bigg\vert %\\&\leq \big\Vert k_\frac{t}{2}^X(x,\cdot)\big(k_\frac{t}{2}^X(\cdot,y) - k_\frac{t}{2}^X(\cdot,y')\big) \big\Vert_{L^1(X)} + \big\Vert k_\frac{t}{2}^X(\cdot,y')\big(k_\frac{t}{2}^X(x,\cdot) - k_\frac{t}{2}^X(x',\cdot)\big) \big\Vert_{L^1(X)} 
  \\
  &\leq  \big\Vert p_\frac{t}{2}(x,\cdot) \big\Vert_{L^{r'}(X)}\big\Vert p_\frac{t}{2}(\cdot,y) - p_\frac{t}{2}(\cdot,y') \big\Vert_{L^r(X)}\\
  &\qquad + \big\Vert p_\frac{t}{2}(\cdot,y') \big\Vert_{L^r(X)}\big\Vert p_\frac{t}{2}(x,\cdot) - p_\frac{t}{2}(x',\cdot) \big\Vert_{L^{r'}(X)} \\&\leq M(t) C(t/2) (\dX(x,x') +  \dX(y, y'))^\alpha \\
  &=: L_t \cdot \mathrm{d}_{X\times X}((x,y),(x',y'))^\alpha,
  \end{aligned}
\end{align}
with $C(t)$ as in~\eqref{agn:semigrouppointwiselipschitz} and $M(t)$ as in~\eqref{eq:mt}.
This %letting $L(t):=M(t/2)C(t/2)$
shows joint Hölder continuity of $p_t$
%\footnote{\PB{Hier verstehen wir die Anmerkungen (9.) des Referees nicht. Noch einmal anschauen, ansonsten so in die Antwort notieren.}}
with constant $L_t := M(t) C(t/2) \geq 0$ (which only depends on $t$) and with respect to the canonical metric $\mathrm{d}_{X\times X}$ on $X \times X$.

Finally, observe that by assumption $\e^{tA}$ maps $L^r(X)$ to $L^\infty(X)$; also,  $\e^{tA'}$ maps $L^{r'}(X)$ to $L^\infty(X)$, hence by duality $\e^{tA}$ maps $L^1(X)$ to $L^r(X)$. We conclude that $\e^{tA}$ maps for all $t>0$ $L^1(X)$ to $L^\infty(X)$, i.e., its kernel is bounded: because it is $\alpha$-Hölder continuous, too, the claim follows.

(ii) To begin with, we consider the operator $A\e^{tA}$, which is bounded from $L^r(X)$ to $D(A^k)\hookrightarrow C^{0,\alpha}(X)\hookrightarrow L^\infty(X)$. By \cite[Theorem~1.3]{AreBuk94}, this means that $A\e^{tA}$ is an integral operator with kernel $q_t\in L^\infty(X;L^{r'}(X))$. However, $A\e^{tA}=\e^{tA}A$, hence
\[
\int_X q_t(x,y)f(y)\dx[y]=A\e^{tA}f(x)=\e^{tA}Af(x)=\int_X p_t(x,y) Af(y)\dx[y]=\int_X A'_y p_t(x,y) f(y)\dx[y]
\]
for all $f\in D(A)$ and hence, by density, for all $f\in L^r(X)$. In other words,
 $A'_y p_t=q_t\in L^\infty(X;L^{r'}(X))$,
% \footnote{\PB{Es ist eigentlich klar, was gemeint ist, ganz genau würde man wahrscheinlich schreiben $[x \mapsto A_y'p_t(x,\cdot)] = [x \mapsto q_t(x,\cdot)] \in L^\infty(X;L^{r'}(X))$. Allerdings finde ich das auch hässlich und ich wüsste keinen besseren Weg. Würde es deshalb so lassen denke ich (aber nur noch einmal zum überdenken).} \DM{Ich gebe Dir Recht: es ist wirklich zu häßlich :)}}
 whence 
 \begin{equation}\label{eq:aptlp}
 p_t(\cdot,\cdot)\in L^\infty(X;D(A'_y))
 \qquad \hbox{for all }t>0.
 \end{equation}
%{\color{gray} Because $p=p'=2$ and $A$ is self-adjoint, its heat kernel is real-valued by~\eqref{eq:pp'} and the right-hand side 
%in the formula
 %in the property (iv) of heat kernels can be regarded as an inner product in $L^2(X)$, i.e.,
%\begin{equation}
%p_{t+s}(x,y ) = \left(p_t (x,\cdot), 
%\overline
%{p_s(\cdot,y)}\right)_{L^2(X)}\qquad\hbox{ for all }t,s>0.
%\end{equation}}
%\footnote{\PB{Brauchen wir diesen Teil noch, wenn wir die Banachraumadjungierte für Teil (ii) anwenden wollen?}}
%\footnote{\PB{An welcher Stelle ist das wichtig? Übrigens: Könnte man nicht auch hier wie in \autoref{cor:self-adj-class} alles auch für nicht selbst-adjungierte $A$ umformulieren? Dann hätte man aber wahrscheinlich die joint Hölder continuity von $A_xA_y'p_t$. Vielleicht übersehe ich auch etwas. Edit: Okay nein, man bekäme sonst Probleme mit den Domains schätze ich: $A_y'p_t(\cdot,y)$ läge formal in $L^{r'}(X)$ und man kann dann a priori erstmal nicht $A_x$ darauf anwenden. Ich denke so wie es jetzt formuliert ist, ist es am optimalsten. Ich lasse mich aber auch gern nocheinmal eines besseren Belehren. :)}}
%Let now $A$ be self-adjoint, and let $y \mapsto A_xp_t(\cdot,y),x \mapsto A_yp_t(x,\cdot)\in L^\infty(X;L^2(X))$. 
Using \eqref{eq:tt2-selfadj}
% as well as the self-adjointness of $A$ (twice) 
we observe that $(A_xA_yp_t)(\cdot,\cdot) = \int_X (A_x p_{\frac{t}{2}})(z,\cdot)(A_y  p_{\frac{t}{2}})(\cdot, z) \dx[\mu(z)]$.
%\begin{align}\label{eq:ax-ay-kernel}
%\begin{aligned}
%(A_x A_y p_t)(\cdot,\cdot) 
%&=
% \e^{\frac{t}{2}A_{\color{cyan} x}}  A_x A_y p_{\frac{t}{2}}(\cdot,\cdot)\\
%&  = \int_X 
%\overline
%{\color{cyan} p_{\frac{t}{2}}(z,\cdot){(A_x A_y  p_{\frac{t}{2}})(\cdot, z)}}\dx[\mu(z)] 
%\\&= \int_X 
%\overline
%{\color{cyan} (A_y  p_{\frac{t}{2}})(\cdot, z)(A_x p_{\frac{t}{2}})(z,\cdot) }\dx[\mu(z)].
%\end{aligned}
%\end{align}
%\footnote{\PB{Können wir die Selbstadjungiertheit weglassen? Und zudem sollten wir mit der Banachraumadjungierten arbeiten. Nachtrag: Ich glaube man benötigt doch die Selbstadjungiertheit in form von $A=A'$.}}
Indeed, taking into account the symmetry of the heat kernel $p = p_\cdot(\cdot,\cdot)$ as $A=A'$ (cf., \eqref{eq:pp'}), for $x,y \in X$, \eqref{eq:tt2-selfadj} (writing the integral $\int_X \cdot \dx[\mu]$ by the dual pairing $\langle \cdot, \cdot \rangle_{L^2(X),L^{2}(X)}$) yields
\begin{align}\label{eq:ax-ay-kernel}
\begin{aligned}
(A_xA_yp_t)(x,y) &= A_x \int_X p_{\frac{t}{2}} (z,y) (A_y'p_{\frac{t}{2}})(x,z) \dx[\mu](z) \\&= A_x \big\langle p_\frac{t}{2}(y,\cdot), (A_yp_{\frac{t}{2}})(x,\cdot) \big\rangle_{L^2(X),L^{2}(X)} \\&= A_x \big\langle A_y p_\frac{t}{2}(y,\cdot), p_{\frac{t}{2}}(x,\cdot) \big\rangle_{L^2(X),L^{2}(X)} \\&=A_x \e^{\frac{t}{2}A_x}A_y p_\frac{t}{2}(y,x) = \e^{\frac{t}{2}A_x} A_xA_y p_\frac{t}{2}(y,x) \\&= \big\langle p_{\frac{t}{2}}(x,\cdot), ( A_xA_y p_\frac{t}{2})(y,\cdot) \big\rangle_{L^2(X),L^{2}(X)} \\&= \big\langle (A_x  p_{\frac{t}{2}})(\cdot,x), ( A_y p_\frac{t}{2})(y,\cdot) \big\rangle_{L^2(X),L^{2}(X)}.
\end{aligned}
\end{align}
It now follows from~\autoref{lem:lipschitzonecoordinate}.(ii) that for all $(x,y),(x',y')\in X\times X$
%\begin{align*}
%\begin{split}
%    &\vert A_xA_yp_t(x,y) - A_xA_y p_t(x',y') \vert \\& \qquad = \bigg\vert \int_X A_yp_{\frac{t}{2}}(x,z)A_xp_{\frac{t}{2}}(z,y) \dx[\mu(z)] - \int_X A_yp_{\frac{t}{2}}(x',z')  A_xp_{\frac{t}{2}}(z',y') \dx[\mu(z')] \bigg\vert \\
%    & \qquad \leq \Vert A_x p_{\frac{t}{2}}(\cdot,y) \Vert_{L^2(X)} \Vert A_y p_{\frac{t}{2}}(x,\cdot) - A_y p_{\frac{t}{2}}(x',\cdot) \Vert_{L^2(X)} \\
%    & \quad\qquad + \Vert A_y p_{\frac{t}{2}}(x',\cdot) \Vert_{L^2(X)} \Vert A_x p_{\frac{t}{2}}(\cdot,y) - A_x p_{\frac{t}{2}}(\cdot,y') \Vert_{L^2(X)},
%    \end{split}
%\end{align*}
\begin{align*}
\begin{split}
    &\vert A_xA _yp_t(x,y) - A_x A_y p_t(x',y') \vert \\
    & \qquad = \Big\vert 
%    \overline
    \big\langle (A_x  p_{\frac{t}{2}})(\cdot,x), ( A_y p_\frac{t}{2})(y,\cdot) \big\rangle_{L^2(X),L^{2'}(X)}   \big.\\
&\qquad\qquad -\big.
    \big\langle (A_x  p_{\frac{t}{2}})(\cdot,x'), ( A_y p_\frac{t}{2})(y',\cdot) \big\rangle_{L^2(X),L^{2'}(X)} \Big\vert \\
    & \qquad \leq \Vert A_y p_{\frac{t}{2}}(y,\cdot) \Vert_{L^2(X)} \Vert A _x p_{\frac{t}{2}}(\cdot,x) - A _x p_{\frac{t}{2}}(\cdot ,x') \Vert_{L^2(X)} \\
    & \qquad\qquad + \Vert A _x p_{\frac{t}{2}}(\cdot ,x') \Vert_{L^2(X)} \Vert A_y p_{\frac{t}{2}}(\cdot,y) - A_y p_{\frac{t}{2}}(y',\cdot ) \Vert_{L^2(X)}\\
    &\qquad \leq \widetilde{M}(t) C(t/2) (\dX(x',x ) +  \dX(y', y ))^\alpha \\
  &\qquad =: \widetilde{L}_t \cdot \mathrm{d}_{X\times X}((x',y'),(x ,y ))^\alpha,
    \end{split}
\end{align*}
where
\begin{equation}\label{eq:wt-mt}
\widetilde{M}(t):=\max \bigg(\esssup_{x \in X} \Vert A_y p_\frac{t}{2}(x,\cdot) \Vert_{L^{2}(X)},\esssup_{y \in X} \Vert A_x p_\frac{t}{2}(\cdot, y) \Vert_{L^2(X)} \bigg).
\end{equation}
This yields the claim by the same argument used in (i).
%\footnote{\PB{Passt soweit alles! :) Habe am ende die Anordnung in der letzten dargestellten Abschätzung geändert, da es über den Rand hinaus ging (ich bin für Gegenvorschläge offen!).}}
 \end{proof}

We should emphasize that the case $r=1$ is not covered by \autoref{thm:abstractlipschitzmeasurespaces} as the proof is based on applying \autoref{lem:lipschitzonecoordinate} (also) to
$A'$ to deduce that the map $y \mapsto p_t(\cdot,y)$ is of class $L^\infty(X;L^r(X))$: but -- apart from the trivial case of bounded operators -- by~\cite[Chapter II, Section 2.6]{EngNag00} the adjoint of a generator $A$ on $L^1(X)$ is not densely defined, as the semigroup generated by $A'$ is known not to be strongly continuous.

For semigroups associated with closed sesquilinear forms, 
%cf.~\cite[Theorem~2.4.2]{Dav89}, 
we obtain the following; we refer to~\cite[Section~VI.3]{DauLio88} for the terminology.

\begin{cor}\label{cor:forms}
%Let $\mu(X)<\infty$. 
Let $V$ be a Hilbert space that is densely and continuously embedded into $L^2(X)$ and also into $C^{0,\alpha}(X)$. Let $\mathfrak a$ be a sesquilinear form  with form domain $V$ that is bounded and coercive with respect to $L^2(X)$. Then the following assertions hold.
\begin{enumerate}[(i)]
\item The corresponding operator $A$ is associated with a heat kernel $p=p_\cdot(\cdot,\cdot)$ and $p_t\in C^{0,\alpha}(X\times X)$ for all $t>0$.
\item 
 If, additionally, $\mathfrak{a}$ is symmetric, then also $A_x A_y p_t:X\times X\to \KK$ is $\alpha$-Hölder continuous.
\end{enumerate}
\end{cor}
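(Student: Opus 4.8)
The plan is to obtain \autoref{cor:forms} as a direct application of \autoref{thm:abstractlipschitzmeasurespaces} in the case $r=2$; almost all of the work consists in translating the form-theoretic data into the hypotheses of that theorem. First I would invoke the classical theory of bounded, coercive sesquilinear forms (see \cite[Section~VI.3]{DauLio88}): since $V$ is densely and continuously embedded in $L^2(X)$ and $\mathfrak{a}$ is bounded and coercive on $V$, the associated operator $A$ is closed and densely defined (its domain being dense in $V$, which is in turn dense in $L^2(X)$) and generates a bounded analytic semigroup on $L^2(X)$. In particular $(\e^{tA})_{t\ge 0}$ is differentiable, which is the semigroup regularity required in \autoref{thm:abstractlipschitzmeasurespaces}.

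Next I would verify the two abstract Morrey embeddings with $k=l=1$. The key point is the continuous embedding $D(A)\hookrightarrow V$: for $u\in D(A)$, coercivity furnishes constants $c>0$ and $\lambda\in\RR$ with $c\,\|u\|_V^2 \le \real\,\mathfrak{a}(u,u)+\lambda\|u\|_{L^2}^2 = \real\,\langle(\lambda-A)u,u\rangle_{L^2}$, and the right-hand side is bounded by $\|(\lambda-A)u\|_{L^2}\|u\|_{L^2}\lesssim \|u\|_{D(A)}^2$; hence $\|u\|_V\lesssim\|u\|_{D(A)}$. Composing with the assumed embedding $V\hookrightarrow C^{0,\alpha}(X)$ gives $D(A)\hookrightarrow C^{0,\alpha}(X)$, so the first hypothesis holds with $k=1$. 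For the adjoint I would recall that, since $r=2$, the dual-space adjoint $A'$ is (up to the usual identification of dual and antidual) the Hilbert-space adjoint $A^*$, which is precisely the operator associated with the adjoint form $\mathfrak{a}^*(u,v):=\overline{\mathfrak{a}(v,u)}$ on the same space $V$; as $\mathfrak{a}^*$ is again bounded and coercive on $V$, the identical estimate yields $D(A')\hookrightarrow V\hookrightarrow C^{0,\alpha}(X)$, so the second hypothesis holds with $l=1$. Part~(i) is then immediate from \autoref{thm:abstractlipschitzmeasurespaces}(i).

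For part~(ii), symmetry of $\mathfrak{a}$ means $\mathfrak{a}=\mathfrak{a}^*$, whence $A=A'=A^*$ is self-adjoint on $L^2(X)$; all the hypotheses of \autoref{thm:abstractlipschitzmeasurespaces}(ii) (with $r=2$ and $A=A'$) are therefore in force, and the $\alpha$-Hölder continuity of $A_xA_yp_t$ is exactly its conclusion. I do not anticipate any genuine obstacle: the corollary is essentially a repackaging of the main theorem in the Hilbert-space form setting, and the only points demanding care are the sign and scaling conventions in the coercivity inequality and the standard identification of the transpose $A'$ with the operator of the adjoint form.
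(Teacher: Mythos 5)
Your proposal is correct and follows essentially the same route as the paper: both arguments invoke the standard fact that the semigroup associated with a bounded, coercive form is analytic (hence differentiable), establish that $D(A)$ and $D(A')$ embed continuously into $V$ and therefore into $C^{0,\alpha}(X)$ (so $k=l=1$), and then apply \autoref{thm:abstractlipschitzmeasurespaces}, with self-adjointness of $A$ handling part (ii). The only difference is that you spell out the coercivity estimate giving $D(A)\hookrightarrow V$ and the identification of $A'$ with the operator of the adjoint form, which the paper simply records as well known.
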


\begin{proof}
It is well-known that the semigroup associated with $\mathfrak a$ is necessarily analytic, hence differentiable; also, the domains of both $D(A)$ and $D(A')$ are continuously embedded into $V$. Now the claims follow immediately from \autoref{thm:abstractlipschitzmeasurespaces}.
\end{proof}

  As intended in the introduction, Lipschitz continuity of the heat kernel in the setup of general one dimensional (second order) elliptic operators of the form
  \[
  A=\frac{\partial}{\partial x}\bigg(a\frac{\partial}{\partial x} + b \bigg) + c\frac{\partial}{\partial x} + d
  \]
  with coefficients $a,b,c,d\in L^\infty(\RR;\CC)$
  was -- to the best of our knowledge -- first discussed in \cite[Theorem~2.36]{AusMcITch98};
indeed, this can also be recovered by using \autoref{thm:abstractlipschitzmeasurespaces} in combination with a few auxiliary results from~\cite[Section~2]{AusMcITch98}.
%\footnote{\PB{Ich glaube Tom Ter Elst hatte hier bemängelt, dass man es so eigentlich nur für $\alpha < \frac{1}{2}$, womit er recht hat, wenn man sich "nur" auf unsere Resultate beruht. Da wir allerdings "some of our methods" schreiben, finde ich es auch für den Fall von Lipschitz-Stetigkeit in Ordnung, so wie wir es jetzt schreiben. Wenn Du einverstanden bist, einfach ignorieren, ich wollte es nur noch einmal ansprechen, damit es nicht in Vergessenheit gerät!}}.
In \autoref{sec:applications} we are going to exemplify our abstract methods by applying them in partially less common contexts. Before doing so, let us present one special and one generalization of our above results for two classes of problems: heat flows associated with Cheeger energies and non-autonomous evolution equations.

\subsection{Heat flows in the non-smooth setting}\label{sec:nonsmooth}
Let us briefly discuss the implications of our theory in the non-smooth setting; we refer to~\cite{Che99,Gig15} for the relevant notions. In particular, let us recall that the \textit{Cheeger energy} is defined by
\[
{\mathrm{Ch}}_2(f):=\begin{cases}
\frac{1}{2}\int_X |Df|^2_w \dx[\mu],\qquad &\hbox{if }f\in W^{1.2}(X,\dX,\mu),\\
+\infty, &\hbox{else},
\end{cases}
\]
where $|Df|^2_w$ is the 2-minimal upper gradient of $f$ and
 $W^{1,2}(X,\dX,\mu):=S^{1,2}(X,\dX,\mu)\cap L^2(X)$
 %\footnote{\PB{Hier stand zuvor: $L^2(X,\mu)$}} 
 is the Sobolev space, see \cite[Chapter~2]{Gig15}. The space $(X,\dX,\mu)$ is said to have the \textit{Sobolev-to-Lipschitz property} if each $f\in W^{1,2}(X,\dX,\mu)$ with $|Df|_w\le 1$ is also Lipschitz continuous with $|f|_{C^{0,1}}\le 1$, cf.~\cite{Gig13}.

\begin{cor}\label{cor:non-smooth}
Let $(X,\dX,\mu)$ be a complete and separable infinitesimally Hilbertian space. Furthermore, let $(X,\dX,\mu)$ have the Sobolev-to-Lipschitz property, and let $W^{1,2}(X,\dX,\mu)$ be continuously embedded into $L^r(X)$
%\footnote{\PB{Hier stand zuvor; $L^r(X,\mu)$. Das wäre aber nicht konsistent mit unserer bisherigen Notation, oder?}} 
for some $r>2$.

Then the heat flow driven by $\mathrm{Ch}_2$ is associated with a jointly Lipschitz continuous heat kernel.
\end{cor}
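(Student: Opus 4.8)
The plan is to place ourselves in the Hilbert space setting of \autoref{thm:abstractlipschitzmeasurespaces} with $r=2$ and exponent $\alpha=1$, and then to read off joint Lipschitz continuity of the heat kernel from part~(i). First I would record the structural facts. Since $(X,\dX,\mu)$ is infinitesimally Hilbertian, the Cheeger energy $\mathrm{Ch}_2$ is a quadratic form, so its polarisation is a symmetric, densely defined, closed Dirichlet form with form domain $V=W^{1,2}(X,\dX,\mu)$; the associated operator $A$ is self-adjoint on $L^2(X)$ and generates the heat flow, which is analytic and in particular differentiable. As $A=A'$, the two embedding hypotheses of \autoref{thm:abstractlipschitzmeasurespaces} collapse to the single requirement $D(A^k)\hookrightarrow C^{0,1}(X)$ for some $k\in\NN$, and \autoref{lem:lipschitzonecoordinate} simultaneously supplies the associated heat kernel $p=p_\cdot(\cdot,\cdot)$.

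The point is that one cannot simply apply \autoref{cor:forms} with $\alpha=1$, because the form domain $V=W^{1,2}$ need \emph{not} embed into $C^{0,1}(X)$: already on $X=[0,1]$ the function $x\mapsto x^{3/4}$ lies in $W^{1,2}$ yet is not Lipschitz. The Lipschitz regularity must therefore be harvested at the level of $D(A^k)$ for a sufficiently large power $k$, and I would split the target embedding into a boundedness estimate and a Lipschitz-seminorm estimate. For boundedness, the continuous Sobolev embedding $W^{1,2}\hookrightarrow L^r(X)$ with $r>2$ is equivalent to a Nash-type inequality and hence to ultracontractivity of $(\e^{tA})_{t\ge0}$; iterating the analytic smoothing along the scale of the $D(A^{j/2})$ then gives $D(A^k)\hookrightarrow L^\infty(X)$ for every $k$ larger than a fixed multiple of the effective dimension $\tfrac{2r}{r-2}$, with $\Vert f\Vert_\infty$ controlled by the graph norm $\Vert f\Vert_{A^k}$.

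For the Lipschitz seminorm I would invoke the Sobolev-to-Lipschitz property in its homogeneous form $|f|_{C^{0,1}}\le\Vert\,|Df|_w\,\Vert_{L^\infty(X)}$, valid for every $f\in W^{1,2}$ with essentially bounded minimal upper gradient. It thus suffices to show that $f\mapsto|Df|_w$ maps $D(A^k)$ continuously into $L^\infty(X)$ for $k$ large, and \emph{this is the main obstacle}. Indeed $f\mapsto|Df|_w$ a priori lands only in $L^2(X)$, via the energy identity $\Vert\,|Df|_w\,\Vert_{L^2(X)}^2=\langle Af,f\rangle$, so the extra integrability created by $r>2$ has to be transferred from the function to its gradient. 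Writing $f=(1+A)^{-k}h$ with $h=(1+A)^k f\in L^2(X)$, each factor $(1+A)^{-1}$ improves integrability through the Sobolev embedding, while the Hilbertian (carré-du-champ) structure controls the gradient through the $L^2$-bounded Riesz transform $h\mapsto|D(1+A)^{-1/2}h|_w$; the delicate part is to propagate this control across $L^p$ for $p>2$ so that, after finitely many steps, $|Df|_w$ is pushed from $L^2$ all the way to $L^\infty$. It is precisely this endpoint—converting the open Sobolev gain from $r>2$ into an $L^\infty$ bound on the upper gradient—where the fine geometry of the space enters, and for which the Sobolev-to-Lipschitz property is tailor-made.

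Granting $D(A^k)\hookrightarrow C^{0,1}(X)$, the self-adjointness $A=A'$ lets \autoref{thm:abstractlipschitzmeasurespaces}(i) conclude that $p_t\in C^{0,1}(X\times X)$ for every $t>0$; that is, the heat flow driven by $\mathrm{Ch}_2$ is associated with a jointly Lipschitz continuous heat kernel, as asserted.
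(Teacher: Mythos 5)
Your overall route is the same as the paper's: use infinitesimal Hilbertianity to realize $\mathrm{Ch}_2$ as a symmetric Dirichlet form with self-adjoint generator $A$, note that the associated semigroup is analytic (hence differentiable), derive ultracontractivity --- and with it the $L^\infty$ part of the desired embedding $D(A^k)\hookrightarrow C^{0,1}(X)$ as well as the existence of an $L^\infty$ heat kernel --- from $W^{1,2}(X,\dX,\mu)\hookrightarrow L^r(X)$ with $r>2$, use the Sobolev-to-Lipschitz property for the Lipschitz seminorm, and conclude with \autoref{thm:abstractlipschitzmeasurespaces} applied with $\alpha=1$, $A=A'$. Even your preliminary observation that \autoref{cor:forms} is not directly applicable matches a remark made in the paper right after the statement of the corollary.

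The problem is that the crucial step of your argument is never actually proved, as you yourself acknowledge: to invoke the Sobolev-to-Lipschitz property you need every $f\in D(A^k)$ to have \emph{essentially bounded} minimal upper gradient, $|Df|_w\in L^\infty(X)$, with norm controlled by the graph norm of $f$. Your plan for this --- resolvent factors gaining integrability, $L^2$-Riesz-transform control of the gradient, then ``propagating this control across $L^p$ for $p>2$'' all the way to $p=\infty$ --- is exactly the step you leave open, and it is not a routine verification: $L^p$ gradient estimates and Riesz transform bounds for $p>2$ are known to fail in general metric measure settings (this is precisely the theme of \cite{AusCouDuo04,CouJiaKos20}, cited in the introduction), and none of the hypotheses here (no curvature bound, no doubling, no Poincar\'e inequality) supplies them. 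Moreover, your closing appeal to the Sobolev-to-Lipschitz property at this endpoint inverts its role: that property \emph{consumes} an $L^\infty$ bound on $|Df|_w$ and returns a Lipschitz bound; it cannot produce the gradient bound. For comparison, the paper's proof is a one-liner at this point: it asserts that $D(A^k)$ is continuously embedded into $W^{1,2}(X,\dX,\mu)\cap L^\infty(X,\mu)\hookrightarrow C^{0,1}(X,\dX)$, i.e., it lets boundedness of the function itself, together with Sobolev membership, trigger Sobolev-to-Lipschitz. Your own example $x\mapsto x^{3/4}$ on $X=[0,1]$ --- a space satisfying every hypothesis of the corollary --- shows that this last embedding cannot be read literally, so you have correctly isolated a genuine subtlety that the paper's argument glosses over; but since you do not close the gap either, your proposal does not yet establish the corollary.
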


Observe that we are not assuming $W^{1,2}(X,\dX,\mu)$ to consist of bounded functions: hence, technically speaking \autoref{cor:non-smooth} is not a direct consequence of~\autoref{cor:forms}.

\begin{proof}
By~\cite[Proposition~4.22]{Gig15}, in the infinitesimally Hilbertian case $W^{1,2}(X,\dX,\mu)$ is a Hilbert space and the Cheeger energy is a symmetric Dirichlet form with respect to $L^2(X,\mu)$; accordingly, the embedding $W^{1,2}(X,\dX,\mu)\hookrightarrow L^r(X,\mu)$ implies that the heat flow is ultracontractive, and in particular that it is associated with an $L^\infty$-heat kernel.  Now, the claim follows from~\autoref{thm:abstractlipschitzmeasurespaces}, since under our assumptions the domain of some power of the flow's (self-adjoint) generator is continuously embedded into $W^{1,2}(X,\dX,\mu)\cap L^\infty(X,\mu)\hookrightarrow C^{0,1}(X,\dX)$.
\end{proof}
   
In particular, infinitesimally Hilbertian CD$(0,N)$ spaces are known to have the Sobolev-to-Lipschitz property, see \cite[Theorem~4.10]{Gig13}. (Further sufficient properties on $(X,\dX,\mu)$ are known that imply the Sobolev-to-Lipschitz property, cf.~\cite{GigHan18,CreSou20,DelSuz22}.) On the other hand, Sobolev inequalities of the type required by \autoref{cor:non-smooth} are known to hold for fairly general classes of metric measure spaces, see~\cite[Theorem~1.3]{BjoKal22}, and also for RCD$^*(K,N)$-spaces (if $K>0$ and $N\in (2,\infty)$) and for essentially non-branching CD$^*(K,N)$-spaces with finite diameter (if $K\in \RR$ and $N\in (1,\infty)$) by~\cite{Pro15,CavMon17}.
   
\subsection{Non-autonomous problems}\label{sec:nonauto}
%\footnote{\DM{Neu!}}
Let $V$ be a Hilbert space that is densely and continuously embedded in $L^2(X)$.  We consider a family $(\mathfrak{a}(t))_{t\ge 0}$ of mappings $\mathfrak{a}(t) = \mathfrak{a}(t;\cdot,\cdot)$ such that 
\begin{align}
&\label{sesquil}\hbox{$\mathfrak{a}(t;\cdot,\cdot): V\times V \to \CC$ is for all $t\ge 0$ a sesquilinear form,}\\
&\label{measurability} [0,\infty)\ni t\mapsto \mathfrak{a}(t;u,v)\in \mathbb C \hbox{ is measurable for all }u,v\in V;
\end{align}
and furthermore such that there exist constants $M, \eta> 0$ and $\omega\geq 0$ such that 
%the boundedness and $H$-ellipticity estimates
\begin{align}
\label{boundedness}|\mathfrak{a}(t;u,v)|\leq M\|u\|_V\|v\|_V\quad &\hbox{for a.e }t>0\hbox{ and }u,v\in V,\\ 
\label{ellipticity}\real \mathfrak{a}(t;u,u)+\omega\|u\|^2_{L^2(X)}\geq\eta\|u\|_V^2\quad &\hbox{for a.e }t>0\hbox{ and }u\in V,
\end{align}
hold: accordingly, by Lax--Milgram the form $\mathfrak{a}(t)$ is associated with an operator $\mathcal A(t)$ on $V'$ for a.e.\ $t>0$; observe that $D(\mathcal A(t))\equiv V$. Set 
\[
\Delta:=\{(t,s)\in (0,\infty)\times (0,\infty):s<t\}:
\]
 a function $p=p_{\cdot,\cdot}(\cdot,\cdot):\Delta\times X\times X\to \CC$ such that
\begin{enumerate}[(a)]
\item
 $p_{t,s}(\cdot,\cdot):X \times X \rightarrow \mathbb{C}$ is measurable and $p_{t,s}(x,\cdot)f(\cdot)\in L^1(X)$ for all $(t,s)\in \Delta$, all $f\in L^2(X)$, and a.e.\ $x\in X$,
 \item 
 the map $x\mapsto \int_X p_{t,s}(x,y)\dx[\mu](y)$ belongs to $L^2(X)$,
\item $t\mapsto p_{t,s}(\cdot,y)\in C^1\left((s,\infty);V'\right)\cap C\left((0,\infty);V\right)$ for all $s\in (0,\infty)$ and a.e.\ $y\in X$,
\item  $\frac{\partial }{\partial t}{p_{t,s}}(\cdot,y)=\mathcal A(t)_x p_{t,s}(\cdot,y)\) for all $(t,s)\in \Delta$ and a.e.\ $y\in X$,
%\item  $\frac{\partial }{\partial s}{p_{t,s}}(\cdot,y)=-p_{t,s}(\cdot,y)\mathcal A(s)\) for all $(t,s)\in \Delta$ and a.e.\ $y\in X$,
\item $p_{t,s}(x,y ) = \int_X p_{t,r} (x,z) p_{r,s}(z,y) \dx[\mu](z)$ for all $t\ge r\ge s\ge 0$ and a.e.\ $x,y\in X$, and
%\item $\lim\limits_{t\to s^+}\int_X p_{t,s}(\cdot,y)f(y)\dx[\mu(y)]=f(\cdot)$ (in $L^2(X)$) for all $f\in L^2(X)$,
%\footnote{\PB{Hier können wir das denke ich lassen, oder was meinst Du? Wir sind ja sowieso nur in $L^2(X)$ unterwegs.}}
\end{enumerate} 
is called \emph{heat kernel} associated with the family $(a (t))_{t\ge 0}$. 

If, additionally,
\begin{enumerate}[(f)]
\item $\lim\limits_{t\to s^+}\int_X p_{t,s}(\cdot,y)f(y)\dx[\mu(y)]=f(\cdot)$ (in $L^2(X)$) for all $f\in L^2(X)$,
\end{enumerate} 
then such a heat kernel induces a \textit{strongly continuous evolution family} $(U(t,s))_{(t,s)\in \Delta}$ (in the sense of~\cite[Chapter~7]{Tan79}) by
 \[
U(t,s)f(x)=\int_X p_{t,s}(x,y)f(y)\dx[\mu(y)]\qquad \hbox{for all }(t,s)\in \Delta,\hbox{ a.e. }x\in X.
 \]
We refer to~\cite{Laa18,LaaMug20} and references therein for an overview of properties of evolution families and their heat kernels, respectively. While the regularity theory of evolution families is much subtler than that of strongly continuous semigroups, see for instance \cite{AreDieLaa14}, the following is known: if there exist $\gamma\in [0,1)$ and a continuous function $\omega:[0,\infty)\to [0,\infty)$ such that
\begin{align}
&\sup_{t\ge 0} \frac{\omega(t)}{t^\frac{\gamma}{2}}<\infty\quad\hbox{and}\quad \int_0^\infty \frac{\omega(t)}{t^{1+\frac{\gamma}{2}}}\dx[t]<\infty\qquad \hbox{for all }t\ge 0,\label{eq:laas1}\\
&|\mathfrak{a}(t;f,g)-{\mathfrak a}(s;f,g)|\le \omega(|t-s|)\|f\|_V \|g\|_{V_\gamma}\qquad\hbox{for all }(t,s)\in \Delta\hbox{ and all }f,g\in V,\label{eq:laas2}\\
&\hbox{the part $A(t)$ of $\mathcal A(t)$ in $L^2(X)$ satisfies $D\left((\omega-A(t))^\frac12\right)= V$ for all $t\ge 0$,}\label{eq:laas3}
\end{align}
%\footnote{\PB{Zu 13.: Das Vorzeichen sollte korrekt sein, da $\mathcal{A}(t)$ bei uns nach (iii) negativ definit ist.}}
then $U(t,s)$ is for all $(t,s)\in \Delta$ a bounded operator from $L^2(X)$ to $V$ by~\cite[Lemma~3.3]{Laa18}.
(Here $V_\gamma$ is a complex interpolation space.) 

\begin{theo}\label{theo-hölder-nona}
Let $(\mathfrak{a}(t))_{t\ge 0}$ be a family of mappings such that \eqref{sesquil}--\eqref{measurability}--\eqref{boundedness}--\eqref{ellipticity} holds. Let additionally  \eqref{eq:laas1}--\eqref{eq:laas2}--\eqref{eq:laas3} be satisfied. 
%Assume $(\mathfrak{a}(t))_{t\ge 0}$ to be associated with a heat kernel $p=p_{\cdot,\cdot}(\cdot,\cdot)$.
  %$\e^{tA}$ is, for all $t>0$, an integral operator with measurable integral kernel $p_t$.
 %\footnote{\DM{Manchmal schreiben wir $p_t^X$, manchmal $p_t$. Das sollte konsistenter werden. Ich frage mich dabei, ob die Notation $p_t^X$ nicht unnötig schwer wirkt. Gibt es Fälle, von $X$ vom Kontext her nicht klar ist?}} 
 %where $q=\frac{p}{p-1}$.
 Let also $V$ be continuously embedded into $C^{0,\alpha}(X)$ for some $\alpha \in (0,1]$. 

Then the associated evolution family $(U(t,s))_{(t,s)\in \Delta}$ has a heat kernel $p=p_{\cdot,\cdot}(\cdot,\cdot)$. Moreover,  $X \ni x \mapsto  p_{t,s}(x,\cdot) \in V$ is $\alpha$-Hölder continuous and $p_{t,s}\in C^{0,\alpha}(X\times X)$, for all $(t,s) \in \Delta$.
\end{theo}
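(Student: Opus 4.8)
The plan is to transpose the strategy of \autoref{lem:lipschitzonecoordinate} and \autoref{thm:abstractlipschitzmeasurespaces} to the non-autonomous setting, with the smoothing estimate \cite[Lemma~3.3]{Laa18} playing the role that differentiability of the semigroup played before, and with the form domain $V$ replacing $D(A)$ as the abstract regularity space. First I would establish the existence of the heat kernel. Under \eqref{eq:laas1}--\eqref{eq:laas2}--\eqref{eq:laas3} the operator $U(t,s)$ maps $L^2(X)$ boundedly into $V$ for every $(t,s)\in\Delta$; since $V\hookrightarrow C^{0,\alpha}(X)\hookrightarrow L^\infty(X)$, it follows that $U(t,s)$ is bounded from $L^2(X)$ to $L^\infty(X)$, so that \autoref{lem:bukhv} produces a measurable kernel $p_{t,s}$ with $x\mapsto p_{t,s}(x,\cdot)\in L^\infty(X;L^2(X))$ and $(U(t,s)f)(x)=\int_X p_{t,s}(x,y)f(y)\dx[\mu(y)]$. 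The axioms (a)--(e) of the non-autonomous heat kernel are then read off as in \autoref{lem:easy-semigr}: (a)--(b) from the integral representation, (c)--(d) from the regularity of $t\mapsto U(t,s)f$ inherent to the evolution family, and (e) from the propagator identity $U(t,r)U(r,s)=U(t,s)$, which transports into the Chapman--Kolmogorov relation for $p$.

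Next I would prove the one-variable estimate. For fixed $(t,s)\in\Delta$ and $f\in L^2(X)$, combining $V\hookrightarrow C^{0,\alpha}(X)$ with the boundedness of $U(t,s)\colon L^2(X)\to V$ yields, as in \eqref{agn:semigrouppointwiselipschitz},
\[
|U(t,s)f(x)-U(t,s)f(x')|\le c\,\|U(t,s)f\|_V\,\dX(x,x')^\alpha\le C(t,s)\,\|f\|_{L^2(X)}\,\dX(x,x')^\alpha .
\]
Expressing the left-hand side through the integral representation and taking the supremum over $\|f\|_{L^2}\le 1$ gives $\|p_{t,s}(x,\cdot)-p_{t,s}(x',\cdot)\|_{L^2(X)}\le C(t,s)\,\dX(x,x')^\alpha$, i.e.\ $\alpha$-Hölder continuity of $x\mapsto p_{t,s}(x,\cdot)$ with values in $L^2(X)$. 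To upgrade the target from $L^2(X)$ to $V$ I would split at an intermediate time $r\in(s,t)$ and invoke Chapman--Kolmogorov to write $p_{t,s}(x,\cdot)=\int_X p_{t,r}(x,z)\,p_{r,s}(z,\cdot)\dx[\mu(z)]=U(r,s)^*\bigl[p_{t,r}(x,\cdot)\bigr]$, the backward (transposed) evolution operator applied to $p_{t,r}(x,\cdot)\in L^2(X)$; if $U(r,s)^*$ maps $L^2(X)$ into $V$, then $\|p_{t,s}(x,\cdot)-p_{t,s}(x',\cdot)\|_V\le \|U(r,s)^*\|_{L^2\to V}\,\|p_{t,r}(x,\cdot)-p_{t,r}(x',\cdot)\|_{L^2(X)}\le C\,\dX(x,x')^\alpha$.

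This is exactly where I expect the main obstacle to sit: the forward family carries no regularity into the second slot, so I must apply \cite[Lemma~3.3]{Laa18} to the \emph{adjoint} forms $\mathfrak a^*(t;u,v):=\overline{\mathfrak a(t;v,u)}$. These inherit \eqref{boundedness} and \eqref{ellipticity} verbatim, \eqref{measurability} is immediate, and \eqref{eq:laas3} is self-dual by the McIntosh-type duality for the square-root problem; the delicate point is \eqref{eq:laas2}, which for $\mathfrak a^*$ carries the interpolation norm on the \emph{first} argument rather than the second, so I would either symmetrise it via the embedding $V\hookrightarrow V_\gamma$ or argue that Laasri's estimate is insensitive to this choice of slot. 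Granting this, the adjoint evolution family $U(\cdot,\cdot)^*$ also smooths $L^2(X)\to V$, and the symmetry relation $p'_{t,s}(x,y)=p_{t,s}(y,x)$ (the non-autonomous analogue of \eqref{eq:pp'}) transfers every first-variable statement into the corresponding second-variable one, in particular $\|p_{r,s}(\cdot,y)-p_{r,s}(\cdot,y')\|_{L^2(X)}\le C\,\dX(y,y')^\alpha$ and $y\mapsto p_{r,s}(\cdot,y)\in L^\infty(X;L^2(X))$.

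Joint Hölder continuity then follows verbatim from the computation \eqref{eq:joint-hoelder-without-h-estimate}: splitting at $r=\tfrac{t+s}{2}$ and combining the two $L^\infty(X;L^2(X))$-bounds with the first- and second-variable $L^2$-Hölder estimates gives $|p_{t,s}(x,y)-p_{t,s}(x',y')|\le L_{t,s}\,\mathrm d_{X\times X}((x,y),(x',y'))^\alpha$. Finally, to obtain membership in $C^{0,\alpha}(X\times X)$ (the space of \emph{bounded} $\alpha$-Hölder functions) rather than just a seminorm bound, I would note that the kernel is bounded: dualising the adjoint smoothing $U(r,s)^*\colon L^2(X)\to L^\infty(X)$ gives $U(r,s)\colon L^1(X)\to L^2(X)$, and factorising $U(t,s)=U(t,r)U(r,s)$ through $L^1(X)\to L^2(X)\to L^\infty(X)$ shows $U(t,s)\colon L^1(X)\to L^\infty(X)$, whence $p_{t,s}\in L^\infty(X\times X)$ and the claim follows.
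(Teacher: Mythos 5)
Your skeleton coincides with the paper's proof everywhere except the key step. The kernel is produced exactly as you say (Laasri's smoothing $U(t,s)\colon L^2(X)\to V\hookrightarrow C^{0,\alpha}(X)\hookrightarrow L^\infty(X)$ plus \autoref{lem:bukhv}), the first-variable Hölder estimate comes from the same duality computation, and the joint estimate is the same Chapman--Kolmogorov splitting at an intermediate time $r$. Where you diverge is the upgrade of the target space from $L^2(X)$ to $V$: the paper never factors through the transposed evolution operator. Instead it runs the \eqref{eq:dub}-type duality on square roots, pairing $f$ against $(-A(t)')^{1/2}\bigl(p_{t,s}(x,\cdot)-p_{t,s}(x',\cdot)\bigr)$ and using that \eqref{eq:laas3} is self-dual, i.e.\ $D\bigl((-A(t)')^{1/2}\bigr)=D\bigl((-A(t))^{1/2}\bigr)=V$, so that the resulting bound \eqref{eq:dub-nona} can be read as the $V$-valued estimate \eqref{agn:heatkernellipschitzinxvariable-nona}.

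The genuine gap in your proposal is exactly the point you flag and then wave through with ``granting this''. The adjoint forms $\mathfrak a^*(t;u,v):=\overline{\mathfrak a(t;v,u)}$ inherit from \eqref{eq:laas2} only the \emph{swapped} estimate $|\mathfrak a^*(t;f,g)-\mathfrak a^*(s;f,g)|\le\omega(|t-s|)\,\|g\|_V\|f\|_{V_\gamma}$, and your first repair fails: since $V\hookrightarrow V_\gamma$ one can dominate $\|\cdot\|_{V_\gamma}$ by $\|\cdot\|_V$ but not conversely, so $\|g\|_V\|f\|_{V_\gamma}$ is \emph{not} controlled by $\|f\|_V\|g\|_{V_\gamma}$ unless $V=V_\gamma$. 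Hence \cite[Lemma~3.3]{Laa18} cannot simply be cited for $\mathfrak a^*$ (note also that $(U(t,s)')$ is a \emph{backward} family, so a time reversal, which you omit, is needed before any forward-time result applies); your second suggestion -- that Laasri's argument is insensitive to which slot carries the $V_\gamma$-norm -- is precisely the statement that would have to be proved. Until it is, your $V$-valued Hölder estimate, your second-variable estimates, and your $L^1\to L^\infty$ factorization are all conditional.

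In fairness, you have isolated the true crux of the non-autonomous case, and the paper's own proof is exposed to the same objection, only less visibly: its passage from \eqref{agn:semigrouppointwiselipschitz-nona} to \eqref{eq:dub-nona} (``as in \eqref{eq:dub}'') moves $(-A(t)')^{1/2}$ onto the test function and thus produces $U(t,s)(-A(t))^{1/2}f$, whereas \eqref{agn:semigrouppointwiselipschitz-nona} controls $(-A(t))^{1/2}U(t,s)f$; in \eqref{eq:dub} these agree because a semigroup commutes with powers of its generator, but an evolution family does not, and extending $U(t,s)(-A(t))^{1/2}$ to a bounded operator from $L^2(X)$ into $C^{0,\alpha}(X)$ amounts, by duality and \eqref{eq:laas3}, to the very adjoint smoothing $U(t,s)'\colon L^2(X)\to V$ that you need. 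So your route makes explicit a hypothesis the paper leaves implicit; as submitted, however, the proof is incomplete at that point.
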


\begin{proof}
We will adapt the proof of \autoref{lem:lipschitzonecoordinate}.(i): By \cite[Lemma~3.1]{Laa18}, under our assumptions $U(t,s)$ is for all $(t,s)\in \Delta$ a bounded linear operator from $L^2(X)$ to $V$, hence
% by the Closed Graph Theorem 
to $C^{0,\alpha}(X)$: in particular, $U(t,s)$ boundedly maps $L^2(X)$ to $L^\infty(X)$ and, hence, it is an integral operator with $L^\infty$-kernel. Hence, there exists  $C = C(t,s) \geq 0$ such that 
 \begin{equation}\label{eq:Ct-diff-nona}
\| U(t,s)f\| _{C^{0,\alpha}(X)} + \| A(t)U(t,s) f\| _{C^{0,\alpha}(X)} \leq C(t,s) \Vert f \Vert_{L^2(X)},
 \end{equation}
whence
      \begin{align}\label{agn:semigrouppointwiselipschitz-nona}
\vert  (-A(t))^\frac{1}{2} U(t,s)f(x) - (-A(t))^\frac{1}{2} U(t,s)f(x') \vert \leq C(t,s) \dX(x,x')^\alpha \Vert f \Vert_{L^2(X)}\qquad\hbox{for all } x,x'\in X.
      \end{align}
We can deduce as in \eqref{eq:dub} that
\begin{equation}\label{eq:dub-nona}
\begin{split}
\Big\vert \big\langle f, (-A(t)')^\frac{1}{2} p_{t,s}(x,\cdot) - (-A(t)')^\frac{1}{2} p_{t,s}(x',\cdot) \big\rangle_{L^2(X)} \Big\vert
%     &= \left\vert \int_X f(y) (A_y^h)'\big(p_t(x,y)- p_t(x',y)\big)\dx[\mu(y)] \right\vert \\
%&= \left\vert \int_X A^hf(y) \big(p_t(x,y)- p_t(x',y)\big)\dx[\mu(y)] \right\vert \\&= \left\vert \e^{tA} A^hf(x)- \e^{tA}A^hf(x') \right\vert \\&= \left\vert A^h\mathrm{e}^{tA}f(x) - A^h\mathrm{e}^{tA}f(x') \right\vert\\& 
\leq C(t,s)  \dX(x,x')^\alpha \Vert f \Vert_{L^2(X)},
\end{split}
\end{equation}
and by density of $V$ in $L^2(X)$  we finally obtain
\begin{align}\label{agn:heatkernellipschitzinxvariable-nona}
\begin{aligned}
\big\Vert p_{t,s}(x,\cdot) - p_{t,s}(x',\cdot) \big\Vert_{V}\leq C(t,s) \dX(x,x')^\alpha.
\end{aligned}
\end{align}
(Observe that \eqref{eq:laas3} implies $D\left((-A(t)')^\frac12\right)=D\left((-A(t))^\frac12\right)= V$ for all $t\ge 0$.)

Moreover, like in the proof of \autoref{thm:abstractlipschitzmeasurespaces}.(ii), we observe for $(t,s) \in \Delta$, $s < r < t$ and $(x,y), (x',y') \in X \times X$ that
\begin{align}\label{eq:joint-hoelder-without-h-estimate-non-autonomous}
\begin{aligned}
\left\vert p_{t,s}(x,y) - p_{t,s}(x',y') \right\vert &= \bigg\vert \int_X p_{t,r}(x,z) p_{r,s}(z,y) \dx[\mu(z)] - \int_X p_{t,r}(x',z) p_{r,s}(z,y') \dx[\mu(z)] \bigg\vert %\\&\leq \big\Vert k_\frac{t}{2}^X(x,\cdot)\big(k_\frac{t}{2}^X(\cdot,y) - k_\frac{t}{2}^X(\cdot,y')\big) \big\Vert_{L^1(X)} + \big\Vert k_\frac{t}{2}^X(\cdot,y')\big(k_\frac{t}{2}^X(x,\cdot) - k_\frac{t}{2}^X(x',\cdot)\big) \big\Vert_{L^1(X)} 
  \\
  &\leq  \big\Vert p_{t,r}(x,\cdot) \big\Vert_{L^{2}(X)}\big\Vert p_{r,s}(\cdot,y) - p_{r,s}(\cdot,y') \big\Vert_{L^{2}(X)}\\
  &\qquad + \big\Vert p_{r,s}(\cdot,y') \big\Vert_{L^2(X)}\big\Vert p_{t,r}(x,\cdot) - p_{t,r}(x',\cdot) \big\Vert_{L^{2}(X)}.
  \end{aligned}.
\end{align}
Since the operators $U(t,r)$ and $U(r,s)$ map $L^2(X)$ continuously to $V$ and thus -- as $V \hookrightarrow C^{0,\alpha}(X)$ by assumption -- to $L^\infty(X)$, the maps $x \mapsto p_{t,r}(x,\cdot)$ and $y \mapsto p_{r,s}(\cdot,y)$ both belong to $L^\infty(X;L^2(X))$. Therefore, using that $V \hookrightarrow L^2(X)$ continuously as well as \eqref{agn:heatkernellipschitzinxvariable-nona}-\eqref{eq:joint-hoelder-without-h-estimate-non-autonomous} finally yields the joint $\alpha$-Hölder continuity of $p_{t,s}$.
\end{proof}

\section{Some applications}\label{sec:applications}
\subsection{Laplacians on bounded domains}\label{subs:laplacians-domains}

Consider a general open domain  $\Omega\subset \RR^d$ that satisfies the strong local Lipschitz condition, see \cite[Section~4.9]{AdaFou03}.
Then it is known that 
\[
H^m(\Omega)\hookrightarrow C^{0,m-\frac{d}{2}}(\overline{\Omega}) \quad\hbox{if } \:\: m-1<\frac{d}{2}<m%\frac{d}{2}\in (m-1,m),
\]
and also
\[
H^{1+\frac{d}{2}}(\Omega)\hookrightarrow C^{0,\alpha}(\overline{\Omega})  \quad\hbox{{for all } }\alpha<1,
\]
as long as the exponents of the Sobolev spaces are integers, see \cite[Theorem 4.12, Part II]{AdaFou03}. While in none of these cases the choice $\alpha=1$ is admissible, hence this does not yield an embedding into the space of Lipschitz continuous functions, the Besov space $B^s_{2,\infty}(\Omega)$ satisfies
\[
B^s_{2,\infty}(\Omega)\hookrightarrow C^{0,s-\frac{d}{2}}(\overline{\Omega})\quad\hbox{if }\lceil s\rceil-1\le \frac{d}{2}<s\le \lceil s\rceil,
\]
by \cite[Theorem~7.37]{AdaFou03}, hence in particular
\[
B^{1+\frac{d}{2}}_{2,\infty}(\Omega)\hookrightarrow C^{0,1}(\overline{\Omega}) \qquad \text{for even $d$}.
\]
%\footnote{\DM{Wir haben ja Folgendes geschrieben: In fact, a necessary condition for a bounded domain having a local Lipschitz boundary is the so-called \emph{(strong) cone condition} (e.g., see \cite[Section 4.6]{AdaFou03}).\\ Stimmt das überhaupt? Woher wissen wir das?}} 
We conclude that, by \autoref{lem:lipschitzonecoordinate}, if $A$ generates a differentiable semigroup on $L^2(\Omega)$ for $d$ even, and if $D(A^k)$ is continuously embedded into a Besov space $B^s_{2,\infty}(\Omega)$ for $s$ large enough, then the heat kernel is Lipschitz continuous of exponent $s-\frac{d}{2}$ in each coordinate.
%\footnote{\PB{Stimmt das so? Lemma 2 liefert ja erstmal nur dass $x \mapsto p_t(x,\cdot)$ bzw. $y \mapsto p_t(\cdot,y)$ von Klasse $C^{0,s-\frac{d}{2}}(\Omega,L^2(\Omega))$ sind, oder? Wobei man schon für beispielsweise festes $y \in X$ die Hölder/Lipschitz-Stetigkeit von $p_t$ bzgl. $x$ mit Lipschitzkonstante $L(y):=\Vert p_t(\cdot,y) \Vert_{L^r(X)}$ (das folgt ja einfach aus dem Beweis von \autoref{thm:abstractlipschitzmeasurespaces}), was endlich ist, falls wir nicht nur $p_t(\cdot,y) \in L^q(X)$ fordern, sondern eben stärker $p_t(\cdot,y) \in (L^p \cap L^q(X)$ fordern. In dem "häufigsten" Fall, dass $p=2$ entsteht also kein Unterschied und man hat auch punktweise(!) Hölder für festes $y \in X$ (bzw. $x \in X$) in einer Variablen nur mit \autoref{lem:lipschitzonecoordinate}. Sollten wir das noch hervorheben, oder ist das klar? Was denkst Du?} \DM{Ja, vielleicht sollten wir es tun. Zumal $p=q=2$ ist insbesondere erfüllt, falls die Halbgruppe von Hilbert--Schmidt-Klasse ist, was ein klassischer Fall ist.}}

If $\Omega$ satisfies instead the cone condition, then by \cite[Theorem 4.12, Part I, Case A]{AdaFou03}
\begin{equation}\label{eq:adamsfour-sobolev2}
H^{m}(\Omega)\hookrightarrow W^{1,\infty}(\Omega)\quad\hbox{if }m>\frac{d}{2}+1
\end{equation}
again as long as $m$ is  integer; and it is also known that $W^{1,\infty}(\Omega)=C^{0,1}(\overline{\Omega})$ whenever $\overline{\Omega}$ is a \textit{quasiconvex} domain (i.e., there exists $C>0$ such that any two points $x,y\in\overline{\Omega}$ can be joined by a curve $\gamma$ whose support is contained in $\overline{\Omega}$ and whose length does not exceed $C\dist(x,y)$, see \cite[Theorem~4.1]{Hei05}).
%\footnote{\DM{Das düfte für uns interessant sein, denn: ``Quasiconvex spaces are precisely the spaces which are bilipschitz equivalent to length spaces'' laut \cite{HakHer08}. Der Raum ist wahrscheinlich auch im MMR-Kontext definierbar. Daher stellt sich ``nur'' die Frage, für generatoren einer Halgbruppe im Längeraum, $D(A_k)$ in $W^{1,\infty}$ eingebettet ist. Insbesondere könnte \url{https://math.stackexchange.com/q/269533} vielleicht einen Hinweis geben, wie wir den allgmemeinen Fall metrischer Maßräume betrachten könnten.}}
%\footnote{\DM{Ich sehe nicht mehr, ob/warum dies die erwünschte Einbettung in $Lip(\overline{\Omega})$ liefert. Ich kann in \cite{AdaFou03} keine passende Referenz finden, die Aussagen in \cite[Corollary~9.13]{Bre10} scheinen schwächer zu sein.} \PB{Haben wir denn nicht alles was wir brauchen im Falle von quasikonvexen Gebieten? Zum Einen haben wir $H^m(\Omega) \hookrightarrow W^{1,\infty(\Omega)}$, zum Anderen aber auch $W^{1,\infty}(\Omega)=C^{0,1}(\overline{\Omega})=C^{0,1}(\overline{\Omega})$, oder übersehe ich etwas?} \DM{Ja, eben. Was ich meinte ist: ich glaube nicht, dass wir auf die Bedingung der Quasikonvexität von $\Omega$ verzichten können, wie wir ursprünglich vermutet haben.}}
Again, we conclude by \autoref{lem:lipschitzonecoordinate}, that if $A$ generates a differentiable semigroup on $L^2(\Omega)$, and if $D(A^k)\hookrightarrow H^m(\Omega)$ for some $m>\frac{d}{2}$, then the heat kernel is Lipschitz continuous in each coordinate.

Checking that $D(A^k)$ is continuously embedded into Besov or Sobolev spaces of order high enough is a standard application of boundary regularity theory. Let us show how to deal with a class of second order elliptic operators -- even though more refined results are actually available \cite[Section~3]{ElsOuh19} -- and even to higher order pseudodifferential operators.

\begin{prop}
Let $\Omega\subset \RR^d$ be a bounded, open, quasiconvex domain with boundary of class $C^{2k}$ for some $k \in \mathbb{N}_0$ with $k > \frac{d+2}{4}$.
 Let 
\[
A:u\mapsto \nabla\cdot(a\nabla u)+b\cdot \nabla u + cu
\] 
be a uniformly elliptic second-order differential operator
%\footnote{\PB{Zu 16.: This includes that $a$ is positive definite.}} 
with Dirichlet boundary conditions and measurable
%\footnote{\PB{Sind die Koeffizienten nicht sowieso messbar, wenn sie $C^{2k-1}$ sind?}} 
coefficients $a\in C^{2k-1}(\overline{\Omega};\RR^{d\times d})$, $b\in C^{2k-1}(\overline{\Omega};\RR^d)$, $c\in C^{2k-1}(\overline{\Omega};\RR)$.
% with $V\ge 0$ measurable, and such that $|\nabla 	b|\le \beta V $, $|\nabla 	c|\le \beta V $, $|b|\le \gamma V^\frac{1}{2} $, $|c|\le \gamma V^\frac{1}{2} $ for some $\beta\in (0,1)$ and $\gamma> 0$.
Then  $\overline{\Omega} \ni x\mapsto p_t(x,\cdot)\in H^2(\Omega)$ is Lipschitz continuous.
%, and in particular it is Lipschitz continuous up to the boundary in each coordinate. 
Furthermore, $p_t\in C^{0,1}(\overline{\Omega}\times \overline{\Omega})$, and even $A_x A_y p_t\in C^{0,1}(\overline{\Omega}\times \overline{\Omega})$ if $a(x)$ is Hermitian for each $x\in\Omega$ and $b\equiv 0$.

If $b\equiv 0$ and $c\le 0$, then also the heat kernel $p_t^{(\gamma)}$ associated with the power $-(-A)^\gamma$ enjoys for all $\gamma>1$ the same regularity properties if $a$ is Hermitian and all $\gamma > 1$ or else for general $a$ but $\gamma$ small enough.
 \end{prop}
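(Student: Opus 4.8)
The plan is to verify the hypotheses of \autoref{lem:lipschitzonecoordinate} and \autoref{thm:abstractlipschitzmeasurespaces} in the limiting case $\alpha=1$; everything then reduces to elliptic regularity combined with a Sobolev embedding into $W^{1,\infty}$. First I would record that $A$ generates a bounded analytic, hence differentiable, semigroup on $L^2(\Omega)$: the sesquilinear form associated with $A$ on $H^1_0(\Omega)$ is bounded, and uniform ellipticity together with Gårding's inequality makes it coercive after a harmless shift by a multiple of $\|\cdot\|_{L^2(\Omega)}^2$, which does not affect the regularity of the kernel. In particular $D(A)=H^2(\Omega)\cap H^1_0(\Omega)$.

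The core step is then elliptic regularity: since $\partial\Omega$ is of class $C^{2k}$ and the coefficients lie in $C^{2k-1}(\overline\Omega)$, the standard bootstrap for the Dirichlet problem yields a continuous embedding $D(A^k)\hookrightarrow H^{2k}(\Omega)$. Because $k>\frac{d+2}{4}$, i.e.\ $2k>\frac d2+1$, the embedding \eqref{eq:adamsfour-sobolev2} gives $H^{2k}(\Omega)\hookrightarrow W^{1,\infty}(\Omega)$, and quasiconvexity of $\overline\Omega$ identifies $W^{1,\infty}(\Omega)=C^{0,1}(\overline\Omega)$. Hence $D(A^k)\hookrightarrow C^{0,1}(\overline\Omega)$.

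The adjoint $A'$ has the same structure (coefficients $a^*$, $-b$, still in $C^{2k-1}$), so the identical argument gives $D((A')^k)\hookrightarrow C^{0,1}(\overline\Omega)$ and $D(A')=H^2(\Omega)\cap H^1_0(\Omega)$. Applying \autoref{lem:lipschitzonecoordinate} with $\alpha=1$ then shows that $x\mapsto p_t(x,\cdot)\in D(A_y')\hookrightarrow H^2(\Omega)$ is Lipschitz, while \autoref{thm:abstractlipschitzmeasurespaces}(i) yields $p_t\in C^{0,1}(\overline\Omega\times\overline\Omega)$. When $a$ is Hermitian and $b\equiv 0$ the operator $A$ is self-adjoint on $L^2(\Omega)$, so \autoref{thm:abstractlipschitzmeasurespaces}(ii) applies and gives joint Lipschitz continuity of $A_xA_yp_t$.

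For the fractional power $B:=-(-A)^\gamma$ (with $b\equiv 0$, $c\le 0$, so that $-A$ is a nonnegative sectorial operator and $B$ again generates an analytic semigroup), I would reduce to the previous case by producing an integer $m$ with $D(B^m)=D((-A)^{\gamma m})\hookrightarrow C^{0,1}(\overline\Omega)$. If $a$ is Hermitian, $-A$ is self-adjoint and the spectral theorem, together with the elliptic regularity above for the integer powers and interpolation, identifies $D((-A)^{\gamma m})$ with a Sobolev space of order $2\gamma m$ (subject to the Dirichlet conditions); choosing $m$ so large that $2\gamma m>\frac d2+1$ gives the embedding for every $\gamma>1$. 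For general $a$ one instead identifies $D((-A)^{\gamma m})$ with the corresponding (fractional) Sobolev space through the bounded imaginary powers, or the $H^\infty$-calculus, of $-A$. \emph{The main obstacle} is precisely this last identification in the non-self-adjoint setting: unlike the self-adjoint case, where the spectral calculus supplies it for free, here one must control the interplay between fractional regularity and the Dirichlet boundary conditions, and this identification is only available while $\gamma m$ stays below a threshold set by the boundary conditions and the opening angle of the sector, which forces $\gamma$ to be small. Either way one arrives at $D(B^m)\hookrightarrow C^{0,1}(\overline\Omega)$ and the first part applies verbatim to $B$.
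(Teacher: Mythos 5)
Your proof of the main statement is correct and is essentially the paper's own argument: generation of an analytic (hence differentiable) semigroup (the paper cites Arendt--ter Elst rather than running the form/G\aa rding argument, but this is immaterial), the bootstrap $D(A^k)\hookrightarrow H^{2k}(\Omega)$ by iterated elliptic regularity, the embedding $H^{2k}(\Omega)\hookrightarrow W^{1,\infty}(\Omega)=C^{0,1}(\overline{\Omega})$ from $2k>\tfrac d2+1$ plus quasiconvexity, and then \autoref{lem:lipschitzonecoordinate} and \autoref{thm:abstractlipschitzmeasurespaces}. Your explicit verification that $A'$ has the same structure, so that $D((A')^l)\hookrightarrow C^{0,1}(\overline{\Omega})$ as \autoref{thm:abstractlipschitzmeasurespaces} requires, is a hypothesis the paper leaves implicit, and checking it is a genuine (small) improvement.

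The fractional-power part is where you diverge from the paper, and where there is a real gap. You assert parenthetically that $B=-(-A)^\gamma$ ``again generates an analytic semigroup'' as if this were automatic. For non-Hermitian $a$ it is not: $-A$ is sectorial of some angle $\omega\in[0,\pi/2)$, $(-A)^\gamma$ is sectorial of angle $\gamma\omega$, and $-(-A)^\gamma$ generates an analytic semigroup only when $\gamma\omega<\pi/2$. This -- and not the identification of fractional power domains with Sobolev spaces -- is precisely why the statement allows \emph{all} $\gamma>1$ when $a$ is Hermitian (then $\omega=0$) but only ``$\gamma$ small enough'' for general $a$. You do mention ``the opening angle of the sector'', but you attach it to the domain-identification step while taking generation for granted; as written, the dichotomy in the statement is not actually proved, and the step you present as ``the main obstacle'' is not the obstacle at all.

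Moreover, the machinery you invoke for the embedding $D(B^m)\hookrightarrow C^{0,1}(\overline{\Omega})$ (spectral calculus plus interpolation in the self-adjoint case, BIP/$H^\infty$-calculus and fractional Sobolev spaces with boundary conditions in general) is unnecessary, and it is exactly what drags in the delicate exceptional-exponent and boundary-condition issues you then have to worry about. The reason the restriction $\gamma>1$ appears in the statement is that it makes the following elementary monotonicity argument available: since $-A$ is invertible and sectorial and $\gamma k\ge k$, one has
\begin{equation*}
D(B^k)=D\bigl((-A)^{\gamma k}\bigr)\subset D\bigl((-A)^{k}\bigr)=D(A^k)\hookrightarrow C^{0,1}(\overline{\Omega}),
\end{equation*}
after which the first part of the proof applies verbatim to $B$. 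No interpolation, no BIP, no identification with fractional Sobolev spaces: in the paper the $H^\infty$-calculus enters only to \emph{define} $-(-A)^\gamma$ and to obtain generation of the analytic semigroup, i.e.\ exactly at the step your write-up glosses over.
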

 
 \begin{proof}
By~\cite[Theorem~3.1]{AreEls97}, $A$ generates on $L^2(\Omega)$ a strongly continuous, analytic semigroup $(\e^{tA})_{t \ge 0}$ with integral kernel $p_t$ that belongs to $L^\infty(\Omega\times \Omega)$ for all $t>0$. Now let $k \in \NN$. Then, applying \cite[Theorem~5 in Section~6.3]{Eva10}  iteratively for $m=0,1,\ldots,2(k-1)$, the elliptic problem
\[
\left\{
\begin{split}
A^ku&\in L^2(\Omega),\\
u_{|\partial\Omega}&=0,
\end{split}
\right.
\]
can be seen to enjoy boundary regularity and, hence, $u\in H^{2k}(\Omega)$. In other words, $D(A^{k}) \hookrightarrow H^{2k}(\Omega)$, hence by \eqref{eq:adamsfour-sobolev2} -- for $k>\frac{d+2}{4}$
%\footnote{\PB{Wahrscheinlich dann eher für $k > \frac{d}{4}$? Unser $m$ in \eqref{eq:adamsfour-sobolev2} wäre ja dann $2k$, oder? Und dementsprechend sollte dann $h < k - \frac{d}{4}$ sein? Aber mittlerweile denke ich doch, dass es vielleicht einfach besser wäre zu schreiben "for $h \in \mathbb{N}_0$ small enough". Was denkst Du?}}
 --  $D(A^{k}) \hookrightarrow W^{1,\infty}(\Omega)\hookrightarrow C^{0,1}(\overline{\Omega})$, by quasiconvexity.
 %\footnote{\DM{Warum war eigentlich $\Omega$ sicherlich quasikonvex? Ich habe diese Bedingung jetzt im Satz angenommen.}}.
%Moreover
%\footnote{\DM{Uh? Diese Folgerung verstehe ich nicht mehr. } \PB{Darüber sollten wir nochmal reden, wenn die Koeffizienten schon in $C^{2k-1}(\overline{\Omega})$ liegen, hat man doch eigentlich viel mehr? Wenn ich Tom Ter Elst richtig verstanden habe sagte er, dass wenn die Koeffizienten nur Hölder stetig sind man mehr Regularität (etwa wie $C^{1+\alpha}$, wobei $\alpha$ die Hölderkonstante der Hölderstetigen Koeffizienten darstellt) erwarten kann. Vielleicht verstehe ich es aber auch völlig falsch.}} 
%$(\e^{tA})_{t\ge 0}$ is well-known to be analytic: 
%Hence, if additionally $k>\frac{d}{2}$, then $\e^{tA}$ maps $L^2(\Omega)$ continuously into $H^{2k}(\Omega)$,  for each $t>0$, and thus 
%in particular, by~\eqref{eq:adamsfour-sobolev2} into $W^{1,\infty}(\Omega)$ and then, by quasiconvexity, into $C^{0,1}(\overline{\Omega})$.
% \textcolor{cyan}{(or in other words $D(A^{k-1}) \hookrightarrow C^{0,1}(\overline{\Omega})$ continuously).}
%Let, now, $A$ be the generator of a differentiable semigroup in $L^p(\Omega)$. Assume there exists $k\in \NN$ such that $D(A^k)\hookrightarrow H^m(\Omega)$ for some bounded open set $\Omega$ with boundary of class $C^{\lceil\frac{d}{2}\rceil}$. Then
By \autoref{lem:lipschitzonecoordinate}.(i), we conclude that  
$p_t$ is Lipschitz continuous up to  the boundary with values in $D(A')=H^2(\Omega)$.
%\footnote{\DM{Eigentlich nur für $h$ klein genug, oder?} \PB{Mhh, aber benötigt unsere Proposition \ref{lem:lipschitzonecoordinate} nicht nur die Einbettung $D(A^k) \hookrightarrow C^{0,1}(X)$ für ein $k$? Das $h$ ist ja dann eigentlich beliebig, oder übersehe ich etwas? Was denkst Du?} \DM{Na ja, wir fordern ja, dass $x \mapsto (A^h)' p_t(x,\cdot)$ (bzw., $y \mapsto (A^h)' p_t(\cdot,y)$) von Klasse $C^{0,\alpha}(X;L^{r'}(X))$ sind.} \PB{Mit großer Wahrscheinlichkeit übersehe ich etwas, aber wenn ich in den Beweis von Proposition \ref{lem:lipschitzonecoordinate} bzw. Korollar \ref{cor:self-adj-class} schaue, müsste es doch ausreichen, wenn $A^hp_t(x,\cdot)$ bzw. $A^h p_t(\cdot,y)$ zu $L^2(X)$ gehören, oder? Darüber sollten wir vielleicht kurz sprechen.}}
%\footnote{\PB{Ich habe den Beweis etwas angepasst. Schau gerne noch einmal drüber. Mir ist auch nicht mehr ganz klar, wieso wir erwähnen, dass die Halbgruppe $L^2$ stetig nach $C^{0,1}$ einbettet. Eigentlich brauchen wir doch nur, dass $D(A^{k-1})$ steig in $C^{0,1}$ eingebettet ist, um Proposition \ref{lem:lipschitzonecoordinate} anwenden zu können. Übersehe ich etwas?}}

The joint Lipschitz continuity of the heat kernel follows from \autoref{thm:abstractlipschitzmeasurespaces}.(i), and in the self-adjoint case from \autoref{thm:abstractlipschitzmeasurespaces}.(ii).
%To prove the joint continuity of the heat kernel associated with $A$, observe that the heat kernel $p_t$ belongs to $L^\infty(\Omega \times \Omega)$ by \cite[Theorem~3.1]{AreEls97}	
%~\cite[Section~12.4]{Are06} 
%and apply 
%\autoref{thm:abstractlipschitzmeasurespaces}
%\autoref{cor:bdd-int-hold}.

If $b\equiv 0$ and $c\ge 0$, then $A$ is associated with a coercive form and, hence, $-(-A)^\gamma$ can be defined via $H^\infty$-functional calculus. In particular, for all $\gamma> 1$ it generates a strongly continuous, analytic semigroup and its domain is contained in $D(A)$, whence the assertion follows.
\end{proof}

\subsection{Schrödinger operators on metric graphs}\label{sec:metricgraphs}
Let $\mathcal{G}$ be a connected, possibly infinite but locally finite compact metric graph with %finite
edge set $ \mathsf{E}$, vertex set $ \mathsf{V}$, and edge lengths $(\ell_\me)_{\me\in\mE}$.  We assume in the following that 
\begin{equation}\label{eq:bdd-geom}
\inf_{\me\in\mE}\ell_\me>0.
\end{equation}
We endow $\Graph$ with the canonical metric measure structure: more precisely $(\Graph,d_\Graph,\mu_\Graph)$ is obtained letting $d_\Graph$ be the shortest path metric, and $\mu_\Graph$ be the direct sum of the Lebesgue measure on each interval.
%Roughly speaking, one obtains a metric graph by identifying each edge $\me \in \mE$ of the graph $\mathcal{G}$ with an interval $[0,\ell_\me]$ of some \emph{edge length} $\ell_{\me} \in (0,\infty)$ and gluing together their endpoints in an appropriate way. Endowing such a metric graph $\mathcal{G}$ with the shortest path pseudo-metric $d_\mathcal{G}$ (which is indeed a metric whenever the graph is connected as a topological space) and the measure $\lambda_\mathcal{G} = \bigoplus_{\me \in \mE} \lambda_\me$ which is given by the edgewise Lebesgue measure $\lambda_\me$ on $(0,\ell_\me)$, the triple $(\mathcal{G}, d_\mathcal{G}, \lambda_\mathcal{G})$ becomes a metric measure space, and indeed a length space; 
We can thus consider the spaces $L^2(\Graph)$ and  $C(\mathcal{G})$ of functions on $\Graph$ that are square integrable and continuous with respect to the measure and metric structure of $\Graph$, respectively.  Also, we look at the Sobolev space
 \(
W^{1,2}(\mathcal{G}) := \bigoplus_{\mathsf{e} \in \mathsf{E}} W^{1,2}(0,\ell_\mathsf{e}) \cap C(\mathcal{G})
\)
and, for any subset $\mV_{\mathsf D}$ of $\mV$, at
\(W^{1,2}_0(\mathcal{G};\mVD):=\{f\in W^{1,2}(\Graph):f(\mv)=0\hbox{ for all }\mv\in\mVD\}\):
we refer to \cite{Mug19} for a more detailed introduction.

We consider the self-adjoint, positive semi-definite operator
%\footnote{\DM{Kannst Du mich daran erinnern, warum wir hier $\Delta$ mit dem falschen Vorzeichen betrachten wollen? Was ist der Vorteil? :)}}
 $\Delta^{\Graph;\mVD}:=\Delta^\Graph := \frac{\dx[]^2}{\dx[x]^2}$ on $L^2(\Graph)$ associated with the closed quadratic form
$\mathfrak{a}_\mathcal{G}(f) := \Vert f' \Vert_{L^2(\mathcal{G})}^2$
with form domain $W^{1,2}_0(\Graph;\mVD)$: this is the free Laplacian with mixed vertex conditions  (\emph{Dirichlet} at $\mVD$ and \emph{standard}, i.e., continuity/Kirchhoff, at $\mV\setminus \mVD$).

Now, $\Delta^{\Graph;\mVD}$ generates an analytic, strongly continuous semigroup $(\e^{t\Delta^\Graph})_{t \geq 0}$
%\footnote{\PB{Für die Antwort an den Referee: Für uns heißt assoziiert mit einer quadratischen Form: der Operator ist negativ semi-definit.}}
 that maps $L^2(\mathcal{G})$ to the form domain $W^{1,2}_0(\mathcal{G}; \mV_{\mathsf D})$ and hence by \cite[Lemma~3.2]{KosMugNic22} to $C^{0,\frac12}(\Graph)\cap L^\infty(\Graph)$.
By duality we see that $\e^{t\Delta^\Graph}$ is, for all $t>0$, a bounded linear operator from $L^1(\mathcal{G})$ to $L^\infty(\mathcal{G})$, hence an integral operator with kernel $p_t^\mathcal{G} \in L^\infty(\mathcal{G} \times \mathcal{G})$.
%  (the \emph{heat kernel} of $H_\mathcal{G}$) of $e^{t H_\mathcal{G}}$, in other words
%  \[
%  \e^{tH_\mathcal{G}}f(x) = \int_\mathcal{G} p_t^\mathcal{G}(x,y)f(y) \dx[y]
%  \]
%  for $f \in L^2(\mathcal{G})$ (resp.\:$L^1(\mathcal{G})$) which 
%Let now the metric completion $\overline{\Graph}$  of $\Graph$ be compact: 
%\autoref{thm:abstractlipschitzmeasurespaces} (more precisely, \autoref{cor:bdd-int-hold}) %(even for potentials $q \in L^1(\mathcal{G})$) 
%\footnote{\PB{Hier könnte man noch erwähnen, dass die Glattheit des Kerns in jeder Komponente mittels unserer "neuen" \autoref{lem:lipschitzonecoordinate} schon bekannt ist. Aber vielleicht zieht das den Artikel auch nur in die Länge.} \DM{Ja, ich sehe das auch so.}} 
 Also, by~\cite[Theorem~6]{BecGreMug21} $p^\Graph_t$ is edgewise smooth, more precisely: $p^\Graph_t\in C^\infty(\mathcal E\times \mathcal E)$, where $\mathcal E:=\bigsqcup_{\me\in \mE}(0,\ell_\me)$.
Let us now refine these observations.

\begin{prop}\label{prop:resume-qgraph}
The heat kernel $p_t^{\Graph}$ associated with $-\Delta^\Graph$ satisfies the following properties:
\begin{itemize}
\item[(i)] $x \mapsto \Delta_y^\Graph p^\Graph_t(x,\cdot)$ is  for every $t>0$ Lipschitz continuous, as a mapping from $\Graph$ to both $L^2(\Graph)$ and $L^\infty(\Graph)$;
\item[(ii)]  both $p_t^\Graph$ and $\frac{\partial^2}{\partial x^2}\frac{\partial^2}{\partial y^2} p_t^\mathcal{G}:{\Graph}\times{\Graph}\to \KK$ are Lipschitz continuous for every $t>0$;
%\footnote{\PB{Folgt mit \autoref{thm:abstractlipschitzmeasurespaces} nicht auch noch, dass $p_t \in C^{0,1}(\Graph \times \Graph)$? Ist jetzt keine super Erkenntnis, da es klar ist, dass $p_t \in L^\infty(\mathcal{G} \times \mathcal{G})$ aber nur der Vollständigkeit halber.}}
\item[(iii)]   $x \mapsto p_t^\Graph(x,\cdot)$ is  for every $t>0$ Lipschitz continuous, as a mapping from $\Graph$ to $W^{2,2}(\mathcal E)\cap W^{1,2}(\Graph)$; and in fact
\begin{align}\label{agn:heatkernellipschitzinxvariable-analy-appl}
\Vert p_t^\Graph(x,\cdot) - p_t^\Graph(x',\cdot) \Vert_{W^{2,2}} \leq (C_1 +C_2 t^{-2})d_\Graph(x,x')
\end{align}
for some $C_1,C_2>0$, all $t>0$, and all $x,x'\in \Graph$.
\end{itemize}
\end{prop}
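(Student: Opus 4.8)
The plan is to read the three assertions off the abstract machinery applied to the self-adjoint generator of the heat semigroup, working in the Hilbert space case $r=2$. Write $A$ for the generator of $(\e^{t\Delta^\Graph})_{t\ge 0}$, so that $A=A'$, the operator acts edgewise as a second-order derivative (whence $\|Af\|_{L^2(\Graph)}=\|f''\|_{L^2(\mathcal E)}$), and $\mathfrak{a}_\Graph(f)=\|f'\|_{L^2(\Graph)}^2=\langle -Af,f\rangle$. Since the semigroup is analytic, hence differentiable, and $A$ is closed and densely defined, the only substantial hypothesis to verify in order to invoke \autoref{lem:lipschitzonecoordinate}, \autoref{thm:abstractlipschitzmeasurespaces} and \autoref{cor:analyt-improv} is the embedding
\[
D(A)\hookrightarrow C^{0,1}(\Graph)
\]
with $k=1$ and $\alpha=1$; self-adjointness makes the separate requirement $D((A')^l)\hookrightarrow C^{0,\alpha}(\Graph)$ automatic with $l=1$.

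To secure this embedding I would argue edgewise. For $f\in D(A)$ one has $f\in W^{2,2}(\mathcal E)\cap W^{1,2}(\Graph)$, so $f$ is continuous on $\Graph$ and $f'|_{(0,\ell_\me)}\in W^{1,2}(0,\ell_\me)$ for every $\me\in\mE$. Applying the one-dimensional embedding $W^{1,2}(0,\ell)\hookrightarrow L^\infty(0,\ell)$ to $f'$ — whose constant is uniform over all $\ell\ge \inf_{\me}\ell_\me>0$ by \eqref{eq:bdd-geom} — yields
\[
\|f'\|_{L^\infty(\Graph)}\le C\big(\|f'\|_{L^2(\Graph)}+\|f''\|_{L^2(\mathcal E)}\big).
\]
Because $\|f''\|_{L^2(\mathcal E)}=\|Af\|_{L^2(\Graph)}$ and $\|f'\|_{L^2(\Graph)}^2=\mathfrak{a}_\Graph(f)=\langle -Af,f\rangle\le\tfrac12\|f\|_{D(A)}^2$, the right-hand side is controlled by the graph norm. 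As $f$ is continuous and $d_\Graph$ is the shortest-path metric, the uniform edgewise bound on $\|f'\|_\infty$ upgrades at once to the global estimate $|f|_{C^{0,1}(\Graph)}\le C\|f\|_{D(A)}$. Along the way this shows that the $D(A)$-graph norm is equivalent to the $W^{2,2}(\mathcal E)\cap W^{1,2}(\Graph)$-norm, which will be needed for assertion (iii).

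With the embedding in hand the three claims follow directly. Assertion (i) is \autoref{lem:lipschitzonecoordinate}: part (i) of that lemma gives Lipschitz continuity of $x\mapsto p_t^\Graph(x,\cdot)\in D(A_y')=D(\Delta_y^\Graph)$, hence of $x\mapsto \Delta_y^\Graph p_t^\Graph(x,\cdot)$ into $L^2(\Graph)$, while part (ii) gives the same map Lipschitz into $L^\infty(\Graph)$. Assertion (ii) is \autoref{thm:abstractlipschitzmeasurespaces}: item (i) yields $p_t^\Graph\in C^{0,1}(\Graph\times\Graph)$, and item (ii) — applicable precisely because $r=2$ and $A=A'$ — yields Lipschitz continuity of $A_xA_y p_t^\Graph=\frac{\partial^2}{\partial x^2}\frac{\partial^2}{\partial y^2}p_t^\Graph$ (the two signs of $A$ cancel, so this equals the fourth mixed derivative regardless of sign convention). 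For assertion (iii), the Lipschitz continuity of $x\mapsto p_t^\Graph(x,\cdot)$ into $W^{2,2}(\mathcal E)\cap W^{1,2}(\Graph)$ is \autoref{lem:lipschitzonecoordinate}(i) reread through the norm equivalence above; the quantitative bound \eqref{agn:heatkernellipschitzinxvariable-analy-appl} then comes from \autoref{cor:analyt-improv}, whose estimate \eqref{agn:heatkernellipschitzinxvariable-analy} specializes, for $k=\alpha=1$, to $(C_1+C_2 t^{-(k+1)})d_\Graph(x,x')^\alpha=(C_1+C_2 t^{-2})d_\Graph(x,x')$, again using that $\|\cdot\|_{A'}$ dominates $\|\cdot\|_{W^{2,2}(\mathcal E)}$.

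The main obstacle is entirely concentrated in the uniformity of the edgewise Sobolev embedding: the constant in $W^{1,2}(0,\ell)\hookrightarrow L^\infty(0,\ell)$ must be bounded independently of the edge, over possibly infinitely many and possibly long intervals, which is exactly where the standing assumption \eqref{eq:bdd-geom} is used. Once that uniformity is established, the remainder is bookkeeping around the identification $D(A)\cong W^{2,2}(\mathcal E)\cap W^{1,2}(\Graph)$ and the self-adjointness $A=A'$, which reduce everything to a verbatim application of the abstract results.
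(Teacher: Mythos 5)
Your proposal is correct and follows essentially the same route as the paper: establish the Morrey-type embedding $D(\Delta^\Graph)\hookrightarrow C^{0,1}(\Graph)$ and then read off (i), (ii), (iii) from \autoref{lem:lipschitzonecoordinate}, \autoref{thm:abstractlipschitzmeasurespaces}, and \autoref{cor:analyt-improv}, respectively, using self-adjointness and the equivalence of the graph norm with the $W^{2,2}(\mathcal E)\cap W^{1,2}(\Graph)$-norm. The only difference is that where the paper cites \cite[Lemma 3.7]{MugPlu23} (noting its extension to infinite graphs), you prove the embedding directly via the uniform edgewise Sobolev estimate under \eqref{eq:bdd-geom} and a shortest-path integration argument, which is sound and in fact makes explicit the verification the paper leaves to the reader.
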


\begin{proof}
By \cite[Lemma 3.7]{MugPlu23},   \(
C(\mathcal{G}) \cap \bigoplus_{\me \in \mE} W^{2,2}(0,\ell_\me)
  \) 
  is continuously embedded in $C^{0,1}(\mathcal{G})$, and
in particular, $D(\Delta^\Graph) \hookrightarrow C^{0,1}(\mathcal{G})$ continuously
(this was stated in \cite{MugPlu23} for the case of finite metric graphs, but it is easy to see that the proof remains valid in the infinite case, too).
%Furthermore, as $(\e^{-t\Delta^\Graph})_{t\ge 0}$  is known to admit upper Gaussian bounds (see~\cite[Theorem~4.7]{Mug07} and~\cite[Theorem~6]{BecGreMug21}), 
Thus, \autoref{lem:lipschitzonecoordinate}, \autoref{thm:abstractlipschitzmeasurespaces}, and \autoref{cor:analyt-improv} imply (i), (ii), and (iii), respectively.
\end{proof}

At the risk of being  pleonastic, let us stress that functions in the domain of $\Delta^\Graph$ are Lipschitz continuous, as we have just seen; but in view of the Kirchhoff conditions satisfied by their derivatives, they are \textit{not} continuously differentiable as long as $\Graph$ contains  vertices of degree higher than 2. In particular, $p_t^\Graph(\cdot,y)$ is \textit{not} 
%the heat kernel associated with $\Delta^\Graph$ is given by Mercer's formula, but none of the series' addends is 
continuously differentiable, for any $y\in \Graph$, unless $\Graph$ is a path graph or a loop.

\begin{rem}
It was shown in~\cite[Theorem~5.2]{KosMugNic22}  that $p_t^\Graph\in C^{0,\frac{1}{2}} ({\Graph}\times{\Graph})$: 
%in view of the upper Gaussian bounds for the heat kernel (cf.\  \cite{Mug07,BecGreMug21}), 
this also follows from~\autoref{thm:abstractlipschitzmeasurespaces}.(i) and the one-dimensional Morrey inequality ${W^{1,2}}(\RR)\hookrightarrow C^{0,\frac12}(\RR)$.
%\footnote{\PB{Oder lieber $W^{1,p}([a,b]) \hookrightarrow C^{0,\frac12}([a,b])$?}}
This smoothness property is weaker than the one obtained in \autoref{prop:resume-qgraph} for the plain Laplacian, but it immediately extends to uniformly elliptic operators whose form domain is ${W^{1,2}}(\Graph)$,
% in particular, this is the case 
like the general operators
\[
A:=\frac{\partial}{\partial x}c\frac{\partial}{\partial x}+V
\]
 considered in \cite[Section~6.5]{Mug14}, with $c\in L^\infty(\Graph) $ and $V\in L^1(\Graph)$. In view of~\autoref{theo-hölder-nona}, the heat kernels $p_{t,s}$ associated with
 %this even extends
 time-dependent families of operators
 \[
A(t):=\frac{\partial}{\partial x}c(t)\frac{\partial}{\partial x}+V(t),\quad t\ge 0,
\]
satisfy $p_{t,s}\in C^{0,\frac12}(\Graph\times \Graph)$ for all $(t,s) \in \Delta$ provided $[0,\infty)\ni t\mapsto c(t)\in L^\infty(\Graph)$ and $[0,\infty)\ni t\mapsto V(t)\in L^1(\Graph)$ are uniformly continuous with respect to time and their moduli of continuity satisfy~\eqref{eq:laas1}. 
 %see also the upper Gaussian bounds for $p_{\cdot,\cdot}(\cdot,\cdot)$ proved for $V\ge 0$ in~\cite[Section~6.4]{LaaMug20}.
\end{rem}

The same arguments allow us to discuss more general \emph{magnetic Schrödinger operators} induced by some \emph{electric potential} $q = (q_\me)_{\me \in \mE} \in L^2(\mathcal{G};\CC)$ with $\essinf\limits_{x\in \Graph} \real q(x) \ge q_0>-\infty$ and some \emph{magnetic potential} $B =(B_\me)_{\me \in \mE} \in L^\infty(\mathcal{G};\mathbb{R})$: this is by definition the operator $H^\mathcal{G} = (\frac{\mathrm{d}}{\mathrm{d}x} - \icomp B)^2 - q$ associated with the form $\mathfrak{a}_{\mathcal{G};B,q}(f) := \Vert  f' - \icomp Bf\Vert_{L^2(\mathcal{G})}^2 + \int_\Graph q|f|^2\dx[x]$ with form domain given once again by $W^{1,2}_0(\mathcal{G};\mVD)$ (see \cite[Remark~4.4.(2)]{EgiMugSee23}). 

Indeed, $D(H^\mathcal{G})\hookrightarrow W^{1,2}_0(\mathcal{G};\mVD)\hookrightarrow L^\infty(\Graph)$
%one can observe that  $f \in L^\infty(\mathcal{G})$ for  $f \in W^{1,p}_0(\mathcal{G};\mVD)$ -- and hence for $f \in D(H^\mathcal{G})$ -- 
%(or more precisely $f$ has a continuous representant in $L^\infty(\mathcal{G})$ according to the embedding $H_0^1(\mathcal{G}) \hookrightarrow C(\mathcal{G}) \cap L^\infty(\mathcal{G})$, see above) \DM{Ich glaube, wir brauchen diese Anmerkung nicht.}
and one can show that every $f \in D(H^\mathcal{G})$ is twice weakly differentiable and on each edge $\me \in \mE$ with
\[
f_\me'' = (H^\mathcal{G} f)_\me + \icomp B_\me f_\me' + \icomp B_\me(f_\me' - \icomp B_\me f_\me) + q_\me f_\me,
\]
which readily implies that $f_\me \in W^{2,2}(0,\ell_\me)$ for every $\me \in \mE$.  This setting can be further generalized by imposing so-called \emph{$\delta$-type} (instead of standard) vertex conditions with strengths $\sigma = (\sigma_\mv)_{\mv \in \mV \setminus \mV_{\mathsf D}}\in \CC^{\vert \mV\setminus\mVD \vert}$
%\footnote{\PB{Hier stand vorher: $(\sigma_\mv)_{\mv \in \mV_{\mathsf N}}$}}
 such that $\inf_{\mv\in\mV\setminus\mVD } \real \sigma_\mv \ge \sigma_0>-\infty$
% (see also \cite[Section~2]{RohlederSeifert23}\footnote{\PB{Es gibt wahrscheinlich ältere, bessere Referenzen.}} \DM{Ja, $\delta$-Kopplungen sind ziemlich Standard: ich habe sie von doi:10.1016/j.physleta.2005.08.042 gelernt, sie sind aber sicherlich noch älter.}) or imposing
or even more general vertex conditions, as long as the corresponding Schrödinger operator $H_\mathcal{G}$ still generates a differentiable semigroup.

These arguments fail for  magnetic potentials $B \in L^r(\mathcal{G}) := \bigoplus_{\me \in \mE} L^r(0,\ell_\me)$ with $r \in [2,\infty)$, but in these cases the form domain is also contained in $W^{1,2}(\mathcal{G}) \hookrightarrow C^{0,\frac{1}{2}}(\mathcal{G})$ (again, see \cite[Remark~4.4.(2)]{EgiMugSee23}) and we can at least deduce from \autoref{cor:forms} (joint) $\frac{1}{2}$-Hölder continuity of the corresponding heat kernel.

\subsection{Laplacians on fractals}\label{sec:fractals}
Let $N \in \mathbb{N}$ and $F=(F_i)_{i=1,\dots,N}$ a so-called \emph{iterated function system}, that is, each $F_i: \mathbb{R}^d \rightarrow \mathbb{R}^d$ is a contraction on $\mathbb R^d$. It is known 
%since~\cite{Hut81}, aber eigentlich unnötig
that there exists a unique compact subset $K \subset \mathbb{R}^d$ such that 
\begin{align}\label{eq:self-similarity-condition}
K=F(K) := \bigcup_{i=1}^N F_i(K)
\end{align}
often known as the \emph{self-similar identity}, see e.g.~\cite[(1.1.8)]{Str06}. Moreover, defining
\[
V_k = \bigcup_{\vert \omega \vert =k} F_\omega(K) \qquad \text{with $F_\omega = F_{\omega_1} \circ \dots \circ F_{\omega_k}$,}
\]
where $\omega \in \{1,\dots,N\}^k$ is a word of length $\vert \omega \vert = k$ for $k \in \mathbb{N}$ and considering the relation $\sim_k$ on $V_k$ given by
\[
x \sim_k y \:\: :\Leftrightarrow \:\: \text{there is a word $\omega$ of length $\vert \omega \vert$ such that $x,y \in F_\omega(K)$}
\]
one can follow the procedures represented in \cite[Section~1.4]{Str06} resp.\ \cite[Section~4.2]{Rui13} to define an energy form $\mathcal{E}_K$ which is closed on some form domain $D(\mathcal{E}_K)$, see \cite[Theorem~1.4.2]{Str06} resp.\ \cite[Theorem~4.2.4]{Rui13}. Thus, given some (probability) measure $\mu_K$ on $K$, this yields a self-adjoint operator $-\Delta^K$ associated with $\mathcal{E}_K$ which is called the \emph{Laplacian on the cell (or fractal) $K$}. Moreover, one can define the \emph{resistance metric} $R_K: K \times K \rightarrow [0,+\infty]$ on $K$ through
\begin{align}\label{eq:resistance-metric}
R_K(x,y) := \sup_{u \in D(\mathcal{E}_K)} \frac{\vert u(x) - u(y) \vert^2}{\mathcal{E}_K(u)} \qquad \text{for $x,y \in K$,}
\end{align}
in other words $(K,R_K,\mu_K)$ is a generalized metric measure space.
The case of the Sierpiński gasket, $K={S\!G}$, is thorougly discussed in~\cite[Chapter~1]{Str06}.

It is now possible to deduce Hölder continuity of the heat kernel associated with $-\Delta^{S\!G}$.
%\footnote{\PB{Prinzipiell kann ich dieses Bild auch hier einfügen, aber ich denke das weicht ein wenig zu sehr vom Thema ab. Was meinst Du?} \DM{Ich stimme Dir zu: so nützlich wäre es nicht.}}
The most common choice for a measure on the Sierpiński Gasket is given by the normalized $d$-dimensional Hausdorff measure,
% (for a certain Hausdorff dimension $d$), 
i.e. the measure given by 
\begin{equation}\label{eq:measure-sierp}
\mu_K(B) := \frac{1}{\mathcal{H}^d(K)} \mathcal{H}^d\vert_K(B) \qquad \text{for every Borel set $B \subset \mathbb{R}^2$,}
\end{equation}
where $\mathcal{H}^d$ denotes the $d$-dimensional Hausdorff measure of dimension $d=\frac{\ln(2)}{\ln(3)}$. 

%However, for the upcoming result, it is enough to think of $\mu_K$ as any probability (or finite) measure on $K$ embedded in $\mathbb{R}^2$\footnote{\PB{Stimmt das, was ich hier so laienhaft behaupte?} \DM{Meine ebenfalls laienhafte Meinung ist, dass wir nicht wagen sollten, eine Intuition anzubieten, wenn wir uns nicht sicher sind.}}.
\begin{prop}\label{prop:resume-fractals}
The Laplace operator $-\Delta^{S\!G}$ on the Sierpiński gasket $S\!G$ is associated with a bounded heat kernel $p_t \in L^\infty(S\!G \times S\!G)$ which for all $t>0$ satisfies the following properties:
\begin{itemize}
\item[(i)] $x \mapsto \Delta^{S\!G}_y p_t(x,\cdot) \in L^\infty(S\!G)$ is for every $t > 0$ $\frac{1}{2}$-Hölder continuous with respect to the resistance metric $R_{S\!G}$;
\item[(ii)] both $p_t$ and $\Delta_x^{S\!G}\Delta_y^{S\!G} p_t: {S\!G} \times {S\!G} \rightarrow \mathbb{K}$ are $\frac{1}{2}$-Hölder continuous for every $t >0$ with respect to the resistance metric $R_{S\!G}$.
%\footnote{\PB{Wir können auch noch ein Pendant zu \autoref{prop:resume-qgraph}.(iii) hinzufügen, wenn wir wollen, aber ich habe es erst einmal weggelassen.} \DM{Ja, gut so.}}
\end{itemize}
\end{prop}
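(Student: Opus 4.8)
The plan is to apply \autoref{cor:forms} together with \autoref{lem:lipschitzonecoordinate}.(ii) to the self-adjoint generator $A := \Delta^{S\!G}$, taking for the form domain $V := D(\mathcal{E}_{S\!G})$ and $\alpha := \tfrac12$. First I would record the soft facts: since $-\Delta^{S\!G}$ is self-adjoint and bounded below, $\Delta^{S\!G}$ generates a strongly continuous analytic -- hence differentiable -- semigroup on $L^2(S\!G)$, and its form domain $V$ is, by construction of the Dirichlet form $\mathcal{E}_{S\!G}$, a Hilbert space that embeds densely and continuously into $L^2(S\!G)$.

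The heart of the argument is the continuous embedding $V \hookrightarrow C^{0,1/2}(S\!G)$, where $C^{0,1/2}$ is understood with respect to the resistance metric $R_{S\!G}$. This is all but immediate from the defining formula \eqref{eq:resistance-metric}: for every $u \in V$ and all $x,y \in S\!G$ it yields
\[
|u(x)-u(y)| \le R_{S\!G}(x,y)^{1/2}\,\mathcal{E}_{S\!G}(u)^{1/2},
\]
so that $|u|_{C^{0,1/2}} \le \mathcal{E}_{S\!G}(u)^{1/2} \le \|u\|_V$. To turn this seminorm bound into a bona fide Banach-space embedding I still need to control $\|u\|_\infty$: using that $\mu_{S\!G}$ is a probability measure (cf.\ \eqref{eq:measure-sierp}) the average of $u$ is bounded by $\|u\|_{L^2}$, whereas the oscillation of $u$ is at most $\diam_{R_{S\!G}}(S\!G)^{1/2}\,\mathcal{E}_{S\!G}(u)^{1/2}$; invoking the standard fact that on the gasket $R_{S\!G}$ is a genuine, finite metric inducing the Euclidean topology, the resistance diameter is finite and one obtains $\|u\|_{C^{0,1/2}} \le C\|u\|_V$.

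It then remains to verify the remaining hypotheses of \autoref{cor:forms}: the energy form $\mathcal{E}_{S\!G}$ is symmetric, bounded on $V\times V$ by the Cauchy--Schwarz inequality, and coercive relative to $L^2(S\!G)$ with $\omega=\eta=1$, since $\mathcal{E}_{S\!G}(u)+\|u\|_{L^2}^2=\|u\|_V^2$. \autoref{cor:forms}.(i) then produces an $L^\infty$ heat kernel $p_t$ that is jointly $\tfrac12$-Hölder continuous with respect to $R_{S\!G}$, and, the form being symmetric so that $A=A'$, \autoref{cor:forms}.(ii) gives the joint $\tfrac12$-Hölder continuity of $\Delta_x^{S\!G}\Delta_y^{S\!G}p_t$; this is assertion (ii). Assertion (i) follows identically from \autoref{lem:lipschitzonecoordinate}.(ii) applied to the self-adjoint generator. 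The abstract machinery thus transfers essentially verbatim from the metric-graph case of \autoref{prop:resume-qgraph}, so the only genuine work -- and the main obstacle -- lies in the gasket-specific facts underlying the embedding above: that $R_{S\!G}$ is a finite metric (guaranteeing that $S\!G$ has finite resistance diameter and that $C^{0,1/2}(S\!G,R_{S\!G})$ is meaningfully a space of bounded functions), and that the $\tfrac12$-Hölder seminorm bound read off from \eqref{eq:resistance-metric} can be promoted to include the supremum norm. Both are furnished by the Kigami--Strichartz theory of the Sierpiński gasket.
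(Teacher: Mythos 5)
Your proposal is correct and follows essentially the same route as the paper: both establish the embedding $D(\mathcal{E}_{S\!G}) \hookrightarrow C^{0,\frac{1}{2}}(S\!G)$ with respect to the resistance metric (the seminorm bound being immediate from \eqref{eq:resistance-metric} -- which is precisely Strichartz's (1.6.3) cited by the paper -- and the sup-norm control coming from compactness, i.e.\ finite resistance diameter, of $S\!G$), and then invoke \autoref{lem:lipschitzonecoordinate} for assertion (i) and \autoref{cor:forms} for assertion (ii). The only cosmetic difference is that you verify the form hypotheses of \autoref{cor:forms} explicitly and derive the Hölder seminorm estimate directly from the definition of $R_{S\!G}$ rather than citing the literature.
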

\begin{proof}
According to \cite[(1.6.3)]{Str06}, it follows that $\vert u \vert_{C^{0,\frac{1}{2}}({S\!G})} \leq \mathcal{E}_{S\!G}(u)^\frac{1}{2}$ for every $u \in D(\mathcal{E}_{S\!G})$. As ${S\!G}$ is compact with respect to the resistance metric, it follows that $D(\mathcal{E}_{S\!G}) \hookrightarrow C^{0,\frac{1}{2}}({S\!G})$ and therefore $D(\Delta^{S\!G}) \hookrightarrow C^{0,\frac{1}{2}}({S\!G})$. Thus $(\e^{-t\Delta^{S\!G}})_{t \ge 0}$ generates an analytic semigroup consisting of operators mapping $L^2({S\!G})$ to $L^\infty({S\!G})$ and, hence, of integral operators yielding integral kernels $p_t \in L^\infty({S\!G} \times {S\!G})$ such that for all $t>0$ the map $x \mapsto \Delta_y^{{S\!G}}p_t(x,\cdot)$ belongs to $C^{0,\frac{1}{2}}({S\!G};L^\infty({S\!G}))$ according to \autoref{lem:lipschitzonecoordinate} and $p_t, \Delta_x^{S\!G}\Delta_y^{S\!G} p_t$ is jointly $\frac{1}{2}$-Hölder continuous by~\autoref{cor:forms}. 
\end{proof}

\begin{rem}
(i) We can generalize \autoref{prop:resume-fractals} to consider the so-called \emph{Hanoi attractors} $H\! A_\beta$, $\beta\in [0,\frac{1}{3})$: these objects have been thoroughly discussed in \cite[Chapter~4]{Rui13} (see also \cite{FreRui17} and references therein) and for $\beta=0$ include the Sierpiński Gasket as special case: see \cite[Figure~1]{FreKigRui18} for an illustration of these fractals.

In this case one has to choose an appropriate modification of the measure in~\eqref{eq:measure-sierp}, see, e.g., \cite[Section~4.3.1]{Rui13} to guarantee that, again, $D(\Delta_{H\! A_\beta}) \hookrightarrow C^{0,\frac{1}{2}}(H\! A_\beta)$ by \cite[p.~59]{Rui13} for $\beta \in (0,\frac{1}{3})$.

(ii) Note that -- using the proof of \autoref{prop:resume-fractals} -- it is also possible to deduce Hölder continuity of the corresponding heat kernel with respect to the usual Euclidean distance $\vert \cdot \vert_K$ on $K\in \{S\!G, H\! A_\beta\}$ where $K$ is embedded in the Euclidean space $\mathbb{R}^2$ but with different (and more complicated!) Hölder exponents. More precisely, it is true in the Sierpiński gasket case $K=S\!G$ that the functions in the form domain $D(\mathcal{E}_K)$ are Hölder continuous of exponent $\alpha := \ln(\frac{5}{3}) / \ln(2)$, see, \cite[p.~19]{Str06}, whereas in the Hanoi attractor case $K=H\! A_\beta$ for some $\beta \in (0, \frac{1}{3})$ it is known that functions in the form domain are Hölder continuous of exponent $\alpha_\beta := \frac{1}{2} \ln(\frac{5}{3}) / \ln(\frac{2}{1-\beta})$ with respect to the Euclidean distance $\vert \cdot \vert_K$ on $K$, see, \cite[Proposition~2.12]{FreRui17}).

(iii) The statement of \autoref{prop:resume-fractals} can also be formulated for more general sets $X$ which yield so-called \emph{resistance forms} (and therefore a canonical resistance metric as defined for $K$ in \eqref{eq:resistance-metric}), whenever the underlying energy form is closed and, hence, it is associated with a self-adjoint operator.
We refer to \cite{Kig02} and references therein for more details on resistance forms.
\end{rem}

\subsection{Structurally damped wave equations}\label{sec:damped-wave}

For a general metric measure space $X$, we consider the damped wave equation
\begin{equation}\label{eq:damped-wave}
\frac{\partial^2 u}{\partial t^2}(t,x)=-D u(t,x)-B\frac{\partial u}{\partial t}(t,x),\qquad t>0,\ x\in X.
\end{equation}

Let us now assume that there exists a densely defined, closed, and invertible operator $C$ on $L^2(X)$ such that $CC^*=D$: in particular, $D$ is self-adjoint and positive definite. We also assume $B$ to be a self-adjoint operator on $L^2(X)$, such that $\rho D^\alpha=B$ for some $\alpha\in (0,1]$ and $\rho>0$.

Then, \eqref{eq:damped-wave} can be equivalently re-written as
\[
\frac{\partial }{\partial t}\begin{pmatrix}
v\\ u
\end{pmatrix}(t,x)=\mathcal A \begin{pmatrix}
v\\ u
\end{pmatrix}(t,x),\qquad t>0,\ x\in X,
\]
where the operator matrix $\mathcal A$ on $L^2(X)\times L^2(X)$ is given by
\[
\mathcal A:=\begin{pmatrix}
0 & C^*\\ -C & -B
\end{pmatrix},\qquad D(\mathcal A)\supset D(C)\times \left(D(C^*)\cap D(B)\right).
\]
We recall that $L^2(X)\times L^2(X)\simeq L^2(X\sqcup X)$: hence, in order to apply our general theory, we resort to the metric measure space 
\[
\mathbb X:=X\sqcup X:
\]
 because $X\sqcup X$ is a disconnected metric space, we will effectively deal with a generalized metric, see \autoref{exa:disj-un} below for the notion of Hölder continuity in the disconnected generalized metric measure space $X\sqcup X$.

It was shown in \cite{CheTri89}, see also \cite[Chapter~6]{XiaLia98}, that, upon closure, $\mathcal A$ generates on $L^2(X)\times L^2(X)$ a semigroup that is analytic in the range $\alpha \in [\frac{1}{2},1]$, and differentiable (but not analytic) in the range $\alpha\in (0,\frac{1}{2})$. It is known, see~\cite{CheTri90}, that  $\mathcal A$ is closed with domain
$D(\mathcal A)=D(C)\times D(C)$ in the \textit{structurally damped} case of $B=\rho C=\rho C^*$ (and, hence, $\alpha=\frac12$).

In particular, if $p_t$ denotes the (matrix-valued!) heat kernel of $\e^{t\mathcal A}$ and 
$p^{(21)}_t,p^{(22)}_t$ denote its components on the second row,
%\footnote{\DM{Ich glaube nicht, dass dies stimmt. In der Mail, die ich geschrieben habe, habe ich eine andere Formel vorgeschlagen: dabei treten $p^{(21}_t$ und $p^{(22)}_t$ auf, die entsprechenden Einträge des $2\times 2$-Matrix-wertigen Kerns $p_t$.}}
 one can observe that
\[
u(t,x) := \int_{X} \left(p^{(21)}_t(x,y)f_1(y) + p^{(22)}_t(x,y)f_2(y)\right) \dx[y],\qquad x\in X,
\]
where $f_1(x) = v(0,x)$ and $f_2(x) = u(0,x)$ represent the initial data of \eqref{eq:damped-wave} for $x \in X$: accordingly, $u$ inherits the smoothness of $p^{(21)}_t,p^{(22)}_t$: let us illustrate this in a simple case.

%\footnote{\PB{Naive Frage (zielt wahrscheinlich auch auf Deine Frage unten ab), aber: Was ist, wenn wir stattdessen einfach die Metrik gar nicht reskalieren, sondern einfach auf $+\infty$ setzen, wann immer $x$ und $y$ nicht zur selben Komponente gehören. Auf diese Weise erhalten wir dann einen (genau wie Du unten sagst) pseudometrischen Raum, der prinzipiell eben Werte $+\infty$ annehmen kann. Wenn ich aber so drüber nachdenke, sollte unsere Theorie anwendbar sein auch für "unendliche" Distanzen. Somit hätte man auch eine stetige Einbettung $C^{0,1}(X) \times C^{0,1}(X) \hookrightarrow C^{0,1}(\mathbb X, d_{\mathbb X})$ (durch die gleiche Abbildung wie unten) und wir müssten unsere Theorie anwenden können. Ich habe das mal in blau in die Proposition 4.4 und den Beweis eingefügt.}\DM{Du hast vollkommen Recht!}}
\begin{prop}
Let $\Delta:=-\frac{\dx[]^2}{\dx[x]^2}$ with $D(\Delta):=H^2(0,1) \cap H^1_0(0,1)$. Then the semigroup generated by
\[
\mathcal A:=
\begin{pmatrix}
0 & \Delta\\ -\Delta & -\Delta
\end{pmatrix}
\]
on $L^2(\mathbb X)$ has a bounded, jointly Lipschitz continuous integral kernel $p_t$ for all $t>0$. Also, $\mathbb x\mapsto p_t(\mathbb x,\cdot)\in C^{0,1}\left(\mathbb X;D(\mathcal A_{\mathbb y})\right)$ for all $t>0$.
Furthermore, 
\begin{equation}\label{eq:smooth-wave}
\left\|
\begin{pmatrix}
0 & \Delta \\ -\Delta & -\Delta
\end{pmatrix}
p_t(\mathbb{x},\cdot)
-
\begin{pmatrix}
0 & \Delta \\ -\Delta & -\Delta
\end{pmatrix}
p_t(\mathbb{x}',\cdot)\right\|_{L^2(\mathbb X)}\le (C_1 t^{-1}+C_2 t^{-2})d_{\mathbb{X}}(\mathbb{x},\mathbb{x}'),
\end{equation}
for all $t>0$ and all $\mathbb{x},\mathbb{x}'\in \mathbb X$, where $d_{\mathbb{X}}(\mathbb{x},\mathbb{x}')$ is defined as in~\eqref{eq:dist-sqcup} below.
\end{prop}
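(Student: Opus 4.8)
The plan is to verify the hypotheses of \autoref{lem:lipschitzonecoordinate} and \autoref{thm:abstractlipschitzmeasurespaces} for $r=2$, $\alpha=1$, $k=l=1$, and then to read off \eqref{eq:smooth-wave} from the analytic refinement behind \autoref{cor:analyt-improv}. First I would match the present operator to the abstract scheme: here $C=C^*=B=\Delta$ and $\rho=1$, so $D=CC^*=\Delta^2$ and $B=\rho D^{1/2}$, i.e.\ we are in the \emph{structurally damped} regime $\alpha=\tfrac12$. By \cite{CheTri89} (and \cite{CheTri90} for the precise domain), the closure of $\mathcal A$ generates a strongly continuous, analytic --- hence differentiable --- semigroup on $L^2(\mathbb X)=L^2(0,1)\times L^2(0,1)$, and it is densely defined with $D(\mathcal A)=D(\Delta)\times D(\Delta)$. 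A direct computation, using that $\Delta$ is self-adjoint, gives the formal adjoint $\mathcal A'=\left(\begin{smallmatrix} 0 & -\Delta \\ \Delta & -\Delta\end{smallmatrix}\right)$, which has the same structure; in particular it too generates an analytic semigroup and $D(\mathcal A')=D(\Delta)\times D(\Delta)$.

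Next I would supply the Morrey embedding. On the interval one has $D(\Delta)\subset H^2(0,1)\hookrightarrow W^{1,\infty}(0,1)=C^{0,1}([0,1])$, so that, using the disjoint-union structure described in \autoref{exa:disj-un},
\[
D(\mathcal A)=D(\Delta)\times D(\Delta)\hookrightarrow C^{0,1}(\mathbb X),\qquad D(\mathcal A')\hookrightarrow C^{0,1}(\mathbb X),
\]
so the embedding hypotheses hold with $k=l=1$ and $\alpha=1$. \autoref{lem:lipschitzonecoordinate}.(i) then yields that $\mathbb x\mapsto p_t(\mathbb x,\cdot)\in D(\mathcal A'_{\mathbb y})$ is Lipschitz; since $D(\mathcal A'_{\mathbb y})=D(\mathcal A_{\mathbb y})=D(\Delta)\times D(\Delta)$ with equivalent graph norms, this is exactly the asserted membership $\mathbb x\mapsto p_t(\mathbb x,\cdot)\in C^{0,1}(\mathbb X;D(\mathcal A_{\mathbb y}))$. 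Applying \autoref{thm:abstractlipschitzmeasurespaces}.(i) (valid since $r=2$) delivers the joint Lipschitz continuity $p_t\in C^{0,1}(\mathbb X\times\mathbb X)$ together with the boundedness of $p_t$, via the $L^1\to L^2\to L^\infty$ mapping argument used in that proof. Note that \autoref{thm:abstractlipschitzmeasurespaces}.(ii) is unavailable here because $\mathcal A\neq\mathcal A'$, consistently with the proposition not claiming Hölder continuity of $\mathcal A_{\mathbb x}\mathcal A_{\mathbb y}p_t$.

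For the quantitative bound \eqref{eq:smooth-wave} I would use the $h=1$ case of \autoref{lem:lipschitzonecoordinate}.(ii) with the sharpened time dependence from the proof of \autoref{cor:analyt-improv}. Since the semigroup is analytic, $\|\mathcal A\,\e^{t\mathcal A}\|_{L^2\to L^2}\lesssim t^{-1}$ and $\|\mathcal A^{2}\e^{t\mathcal A}\|_{L^2\to L^2}\lesssim t^{-2}$, so bounding $\|\mathcal A\e^{t\mathcal A}f\|_{C^{0,1}}\le C(\|\mathcal A\e^{t\mathcal A}f\|_{L^2}+\|\mathcal A^{2}\e^{t\mathcal A}f\|_{L^2})$ shows that the constant multiplying $d_{\mathbb X}(\mathbb x,\mathbb x')$ in the $h=1$ estimate has the form $C_1t^{-1}+C_2t^{-2}$ (only the terms $t^{-h}=t^{-1}$ and $t^{-(h+k)}=t^{-2}$ survive, as $h=k=1$). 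The remaining point is that \autoref{lem:lipschitzonecoordinate}.(ii) controls $\mathcal A'_{\mathbb y}p_t$, whereas \eqref{eq:smooth-wave} features $\mathcal A_{\mathbb y}p_t$; since $p_t(\mathbb x,\cdot)\in D(\mathcal A'_{\mathbb y})=D(\mathcal A_{\mathbb y})$, both are defined, and they are reconciled by the elementary graph-norm comparison $\|\mathcal A g\|_{L^2}\le C\|\mathcal A' g\|_{L^2}$ for $g\in D(\mathcal A)=D(\mathcal A')$, obtained by recovering $\Delta g_1,\Delta g_2$ from $\mathcal A' g=(-\Delta g_2,\Delta g_1-\Delta g_2)$ and re-expressing $\mathcal A g$ through them. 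Taking $g=p_t(\mathbb x,\cdot)-p_t(\mathbb x',\cdot)$ converts the $\mathcal A'$-estimate into \eqref{eq:smooth-wave}.

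The main obstacle is precisely this last bookkeeping: the abstract machinery is phrased through the adjoint $\mathcal A'$ acting in the second variable, while the statement applies $\mathcal A$ itself, so one must check that the two domains coincide --- which holds here because $\Delta$ is self-adjoint, forcing $D(\mathcal A)=D(\mathcal A')=D(\Delta)\times D(\Delta)$ --- and that the associated graph seminorms are comparable. Everything else, namely the analyticity in the range $\alpha=\tfrac12$, the one-dimensional Morrey embedding $H^2(0,1)\hookrightarrow C^{0,1}$, and the disjoint-union conventions, is either quoted from \cite{CheTri89,CheTri90} or routine.
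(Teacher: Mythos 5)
Your proposal is correct and takes essentially the same route as the paper's own proof: Chen--Triggiani analyticity for the structurally damped operator matrix, the one-dimensional Morrey embedding $H^2(0,1)\hookrightarrow C^{0,1}([0,1])$ yielding $D(\mathcal A),\,D(\mathcal A')\hookrightarrow C^{0,1}(\mathbb X)$ in the disjoint-union framework, and then \autoref{lem:lipschitzonecoordinate}, \autoref{thm:abstractlipschitzmeasurespaces}.(i) and \autoref{cor:analyt-improv}. The only differences are cosmetic: the paper handles the adjoint via the similarity $\mathcal A'=U\mathcal A U^{-1}$ with $U=\mathrm{diag}(\Id,-\Id)$ and identifies $D(\mathcal A^k)=D((\mathcal A')^k)$ by induction, whereas you match $\mathcal A'$ directly to the structurally damped form and reconcile the $\mathcal A$- and $\mathcal A'$-graph norms by an explicit comparison; your tracking of the $C_1t^{-1}+C_2t^{-2}$ time dependence (keeping only the $h=1$ terms) is, if anything, more careful than the paper's bare appeal to \autoref{cor:analyt-improv}.
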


\begin{proof}
By the results in~\cite{CheTri89,CheTri90} applied to $B=C=C^*:=\Delta$, $\mathcal A$ generates an analytic semigroup.
We are going to apply our general theory with $X:=(0,1)$ and hence, as in \autoref{exa:disj-un} below, $\mathbb X:=X\sqcup X=(0,1)\sqcup (0,1)$. 
%are associated with the same evolution equation and, hence,\footnote{\PB{Sollte hier vielleicht etwas gelöscht werden oder ist vielleicht etwas verloren gegangen?}}

By analyticity, each $\e^{t\mathcal{A}}$ maps $L^2(\mathbb X) \simeq L^2(0,1) \times L^2(0,1)$ into $D(\mathcal A) \subset H^2(0,1) \times H^2(0,1)\hookrightarrow L^\infty({\mathbb X})$.	
Now, observe that the operator matrices $\mathcal A,\mathcal A'$ are similar: indeed
\[
%\mathcal A=\begin{pmatrix}
%0 & -\Delta\\ \Delta & \Delta
%\end{pmatrix}
%\quad\hbox{and}\quad
\mathcal A'=
\begin{pmatrix}
0 & -\Delta\\ \Delta & -\Delta
\end{pmatrix}
=\begin{pmatrix}
\Id & 0\\ 0 & -\Id
\end{pmatrix}
\begin{pmatrix}
0 & \Delta\\ -\Delta & -\Delta
\end{pmatrix}
\begin{pmatrix}
\Id & 0\\ 0 & -\Id
\end{pmatrix}=:U\mathcal AU^{-1}
%\qquad \hbox{with domain }\left(H^2(0,1) \cap H^1_0(0,1)\right)\times \left(H^2(0,1) \cap H^1_0(0,1)\right).
\]
Accordingly, $\mathcal A'$ generates an analytic semigroup, too, that likewise maps $L^2(\mathbb X)$ to $L^\infty(\mathbb X)$, and by duality $\e^{t\mathcal A}$ maps $L^1(\mathbb X)$ to $L^\infty(\mathbb X)$.
This implies that the integral kernel $p_t$ of $\e^{t\mathcal A}$ is indeed of class $L^\infty(\mathbb{X} \times \mathbb{X})$, for all $t>0$.
%\footnote{\PB{Vorausgesetzt das, was ich oben geschrieben habe macht Sinn. Ich bin noch etwas skeptisch, ob man wirklich so kanonisch $C^{0,1}(X) \times C^{0,1}(X)$ mit $C^{0,1}(\mathbb X)$ identifizieren kann.}}
 Moreover, the corresponding integral kernel $p_t$ is jointly Lipschitz continuous by~\autoref{thm:abstractlipschitzmeasurespaces}.(i)
%\footnote{\PB{Nicht eher: "by \autoref{cor:bdd-int-hold}"?}} 
because $D(\mathcal A)\subset H^2(0,1) \times H^2(0,1)\hookrightarrow C^{0,1}([0,1])\times C^{0,1}([0,1]) \simeq C^{0,1}({\mathbb X})$.
% \textcolor{cyan}{and also jointly Lipschitz continuous as they belong to $L^\infty(\mathbb{X} \times \mathbb{X})$.}
% \footnote{\PB{Bei der joint Lipschitz continuity bin ich mir doch nicht mehr ganz so sicher: Wir bräuchten, dass $p_t(x,\cdot)$ zu $L^\infty(\RR \sqcup \RR, L^2(\RR \sqcup \RR))$ gehört. Haben wir das? Ich würde sagen erst einmal nicht oder?} \DM{Good catch! Ich befürchte, Du hast Recht. Du kannst dann gerne den blauen Text k}}

It can be proved by induction that $D(\mathcal A^k)=H^{2k}(0,1)\times H^{2k}(0,1)$ as well as $D\left((\mathcal A')^k\right)=H^{2k}(0,1)\times H^{2k}(0,1)$, for all $k\in \NN_0$: this yields the second assertion. Finally, \eqref{eq:smooth-wave} follows applying \autoref{cor:analyt-improv}.
\end{proof}
Extending this result to more general elliptic operators $B,C$ and higher dimensional domains $\Omega\subset \RR^d$ is only a matter of formulating the appropriate smoothness conditions on the coefficients of $B,C$ and the domain $X$.

\begin{rem}\label{exa:disj-un}
We stress that
% , being Hölder continuous  is a local property, i.e.\ 
there is no change in the notion of Hölder continuity upon allowing for \emph{generalized} metric measure spaces, i.e., for measure spaces endowed with a metric that can attain the value $+\infty$: this setup has been crucial in the analysis of the wave equation \eqref{eq:damped-wave}.

Let $X$ be a metric measure space and endow the disjoint union $\mathbb{X}:=X\sqcup X$ with respect to the canonical product measure, see~\cite[Section I.6]{Con85}, and the (generalized) metric  $d_{\mathbb X}$ on $\mathbb{X} := X \sqcup X$ defined via
\begin{equation}\label{eq:dist-sqcup}
d_{\mathbb X}(\mathbb{x},\mathbb{y}) := \begin{cases} d(\mathbb{x},\mathbb{y}), & \text{if $\mathbb{x}\equiv x,\mathbb{y}\equiv y$ belong to the same copy of $X$}, \\ +\infty, & \text{else},
\end{cases}
\end{equation}
where we write $\mathbb{x} \equiv x$ if we identify $\mathbb{x} \in \mathbb{X}$ with an element $x \in X$ (belonging to either the first or else the second copy of $X$).

Moreover, we  identify the function space $X^{\CC}\times X^{\CC}\simeq X^{2\CC}$ with $\mathbb{X}^\CC$ by 
\((f,g) \mapsto f \sqcup g\), where
\[
f \sqcup g
 : \mathbb{X} \ni \mathbb{x} \mapsto \begin{cases} f(x), & \text{if }\mathbb{x}=x \in X \sqcup \emptyset, \\ g(x), & \text{if } \mathbb{x}=x \in \emptyset \sqcup X. \end{cases}
\]
Now, let us remark the following: let us endow $C^{0,\alpha}(X) \times C^{0,\alpha}(X)$ with the canonical norm given by
\[
\| (f,g) \|_{C^{0,\alpha} \times C^{0,\alpha}} := \| f \|_{C^{0,\alpha}} + \| g \|_{C^{0,\alpha}},\qquad f,g\in C^{0,\alpha}(X).
%This is indeed a bijection with inverse mapping given by
%\[
%C^{0,\alpha}(\mathbb X) \rightarrow C^{0,\alpha}(X) \times C^{0,\alpha}(X), \: f \mapsto (f\vert_{X \sqcup \emptyset}, f\vert_{\emptyset \sqcup X})
\]
%with compatibility of the norms (i.e., both maps are continuous): i
If $x,y \in X$ belong to the same copy of $X$, we have that $d_{\mathbb X}(\mathbb{x},\mathbb{y}) = d(x,y)$ for $\mathbb{x}\equiv x$ and $\mathbb{y}\equiv y$, and thus
\begin{align*}
\frac{\vert (f \sqcup g)(x) - (f \sqcup g)(y) \vert}{d_{\mathbb X}(x,y)^\alpha} = \begin{cases} \frac{\vert f(x) - f(y) \vert}{d(x,y)^\alpha}, & \text{if }x \in X \sqcup \emptyset, \\ \frac{\vert g(x) - g(y) \vert}{d(x,y)^\alpha}, & \text{if }x \in \emptyset \sqcup X, \end{cases}
\end{align*}
whence
\begin{align*}
\frac{\vert (f \sqcup g)(\mathbb{x}) - (f \sqcup g)(\mathbb{y}) \vert}{d_{\mathbb X}(\mathbb{x},\mathbb{y})^\alpha} \leq \vert f \vert_{C^{0,\alpha}} + \vert g \vert_{C^{0,\alpha}}.
\end{align*}
Because this estimate holds trivially whenever $d_{\mathbb X}(\mathbb{x},\mathbb{y}) = \infty$, we conclude that
\[
\vert f \sqcup g \vert_{C^{0,\alpha}(\mathbb X)} \leq \vert (f,g) \vert_{C^{0,\alpha} \times C^{0,\alpha}}\qquad\hbox{for all }
f,g \in C^{0,\alpha}(X).
\]
Conversely, 
\[
\big\vert f_{|X \sqcup \emptyset} \big\vert_{C^{0,\alpha}} + \big\vert f_{|\emptyset \sqcup X} \big\vert_{C^{0,\alpha}} \leq 2\vert f \vert_{C^{0,\alpha}(\mathbb X)} \qquad \text{for all $f \in C^{0,\alpha}(\mathbb{X})$.}
\] 
We conclude that, for each $\alpha \in (0,1]$,
\begin{equation*}
C^{0,\alpha}(X) \times C^{0,\alpha}(X)\simeq C^{0,\alpha}(\mathbb X).
\end{equation*}
\end{rem}

\noindent
\textbf{Data availability statement}
No new data were created or analyzed in this study.

%\bibliographystyle{plain}
%%%\bibliography{literatur2.bib}
%\bibliography{literatur}
%%
%\end{document}
%%	\vspace*{0.5cm}
%%	
%%		\bibliographystyle{alpha}

\end{document}